\documentclass[11pt,a4paper]{article}
\usepackage[T1]{fontenc}
\usepackage[utf8]{inputenc}
\usepackage[margin=1.3in]{geometry}
\usepackage{tabularx, enumerate,enumitem, authblk,soul}
\usepackage[dvipsnames]{xcolor}
\usepackage{amsmath,amssymb,dsfont,subcaption}
\usepackage{mathtools, graphicx,color,dsfont,amsthm,bbm, mathrsfs}
\usepackage{lipsum,bm,comment}
\usepackage{natbib}
\usepackage{longtable}
\usepackage{array}
\usepackage{caption}
\usepackage[colorlinks=true,linkcolor=blue,anchorcolor=blue,citecolor=blue,filecolor=black,menucolor=black,runcolor=black,urlcolor=black]{hyperref}
\usepackage{cleveref}
\theoremstyle{plain}
\newtheorem{Theor}{Theorem}[section]
\newtheorem{Lem}[Theor]{Lemma}

\newtheorem{Prop}[Theor]{Proposition}
\newtheorem{Corol}[Theor]{Corollary}

\theoremstyle{definition}

\newtheorem{Example}[Theor]{Example}
\newtheorem{Examples}[Theor]{Examples}
\newtheorem{Remark}[Theor]{Remark}

\newcommand{\B}{\mathbb{B}}
\newcommand{\C}{\mathbb{C}}

\newcommand{\E}{\mathbb{E}}

\newcommand{\I}{\mathbb{I}}

\newcommand{\N}{\mathbb{N}}

\renewcommand{\P}{\mathbb{P}}

\newcommand{\R}{\mathbb{R}}
\renewcommand{\S}{\mathbb{S}}

\newcommand{\W}{\mathbb{W}}

\newcommand{\CC}{\mathcal{C}}
\newcommand{\DD}{\mathcal{D}}

\newcommand{\HH}{\mathcal{H}}

\newcommand{\LL}{\mathcal{L}}

\newcommand{\NN}{\mathcal{N}}

\newcommand{\TT}{\mathcal{T}}

\newcommand{\FFF}{\mathscr{F}}

\newcommand{\g}{{\rm g}}
\newcommand{\Fpg}{F_P^\g}
\newcommand{\Qpg}{Q_P^\g}

\def\1{{\mathbb I}}
\newcommand{\Ind}[1]{\1\left[#1\right]}
\newcommand{\ie}{\textit{i.e.} }

\newcommand{\ps}[2]{\left\langle #1,#2 \right\rangle}
\newcommand{\sm}{\setminus}

\DeclareMathOperator{\diag}{diag}

\DeclareMathOperator*{\argmin}{arg\,min}

\newcommand{\loc}{{\rm loc}}
\newcommand{\spa}{\quad\quad}

\newcommand{\wto}{\rightharpoonup}

\let\emptyset\varnothing

\usepackage{scalerel,stackengine}
\stackMath
\newcommand\wwidehat[1]{%
	\savestack{\tmpbox}{\stretchto{%
			\scaleto{%
				\scalerel*[\widthof{\ensuremath{#1}}]{\kern-.6pt\bigwedge\kern-.6pt}%
				{\rule[-\textheight/2]{1ex}{\textheight}}%WIDTH-LIMITED BIG WEDGE
			}{\textheight}% 
		}{0.5ex}}%
	\stackon[1pt]{#1}{\tmpbox}%
}

\newcommand{\RN}[1]{%
	(\textup{\uppercase\expandafter{\romannumeral#1}})%
}
\newcolumntype{H}{>{\setbox0=\hbox\bgroup}c<{\egroup}@{}}

\newenvironment{Proof}[1]
{
	\begin{proof}[Proof of #1]
	}
	{
	\end{proof}
}

\allowdisplaybreaks

\title{Spatial depth characterizes probability measures}

\author{Alberto Gonz{\'a}lez-Sanz$^*$ and Dimitri Konen$^{\dag}$
}
\affil{$^*$Columbia University, $^{\dag}$University of Cambridge
} 
\date{}

\begin{document}

\maketitle

% \begin{frontmatter}

% \title{On the characterization property of spatial quantiles and depth in finite and infinite dimension}
% %\title{A sample article title with some additional note\thanksref{T1}}
% \runtitle{On the characterization property of spatial quantiles and depth in infinite dimension}
% %\thankstext{T1}{A sample of additional note to the title.}

% \begin{aug}
% \author[A]{\fnms{Alberto}~\snm{Gonz{\'a}lez-Sanz}\ead[label=e1]{ag4855@columbia.edu}}
% \author[B]{\fnms{Dimitri}~\snm{Konen}\ead[label=e2]{dk738@cam.ac.uk}}
% \and
% \author[C]{\inits{}\fnms{Davy}~\snm{Paindaveine}\ead[label=e3]{Davy.Paindaveine@ulb.be}}

% \address[A]{Department of Statistics, Columbia University\printead[presep={,\ }]{e1}}

% \address[B]{
% Department of Pure Mathematics and Mathematical Statistics, 
% University of Cambridge, 
% Cambridge, CB3 0WA,
% United Kingdom\printead[presep={,\ }]{e2}}

% \address[C]{ECARES and Department of Mathematics, 
% Universit\'{e} libre de Bruxelles,
% Avenue F.D. Roosevelt, 50, CP114/04,
% B-1050, Brussels,
% Belgium\printead[presep={,\ }]{e3}}
% \end{aug}

\begin{abstract}
We solve two open problems about spatial (or geometric) quantiles and depth. First we show that in infinite dimension, the spatial distribution function and the associated spatial quantiles characterize the underlying distribution, which has been established in \cite*{Kol1997} in finite dimension but remained unknown in infinite dimension.
% ; this is achieved through a combination of results from complex analysis and Fourier transforms. 
Second, and more surprisingly, we show that the spatial depth also fully characterizes probability measures, which has been an open problem even in finite dimension since the introduction of these concepts in \cite*{Cha1996} and \cite*{VarZha2000}.
% , and our proof crucially relies on gradient flow techniques. 
Our results provide theoretical foundations for  nonparametric depth-based statistical inference and introduce novel proof techniques to investigate these questions for other depth and quantile concepts.
\end{abstract}

% \setcounter{tocdepth}{1}
%{
%\hypersetup{linkcolor=black}
%\tableofcontents
%}

%  \vspace{0.5em}

% {\small
% \noindent 
% \emph{Keywords} 
% Breakdown point; {Multivariate quantiles}; {Geometric quantiles}; {Spatial quantiles}; {Spatial depth}; {Statistical depth}

% \noindent \emph{AMS 2020 Subject Classification}  62G05; {62R10}; {62G30}
% }
%  \vspace{.0em}

%\end{frontmatter}

\section{Introduction} \label{secIntro} 
Let $(\HH, \langle \cdot, \cdot \rangle)$ be a separable real Hilbert space with norm $\|\cdot\|$; throughout, $\HH$ may be finite- or infinite-dimensional. Strong (or norm) convergence is denoted by $\to$ and weak convergence by $\wto$. The spatial (or geometric) distribution function (cdf) of an arbitrary probability measure $P$ over $\HH$ is defined in \cite*{Kem1987} and \cite*{Cha1996} as the map 
\begin{equation}\label{defFpg}
\Fpg : \HH \mapsto \overline{\mathbb{B}}
,\ 
x\mapsto \int_{\HH\sm\{x\}} \frac{x-z}{\|x-z\|}\, dP(z) 
,
\end{equation}
where $\overline{\B}\equiv \{x\in \HH : \|x\|\leq 1\}$ is the closed unit ball of $\HH$; note that the $\HH$-valued integral is defined (uniquely) through the Riesz representation theorem. When $\HH=\R$, the distribution function in (\ref{defFpg}) coincides with the usual univariate cdf $F_P$; specifically we have $\Fpg(x) = 2F_P(x)-1$ for all $x\in\R$. In arbitrary dimensions, the cdf $\Fpg$ is invertible under suitable assumptions on $P$, so that its inverse 
$
\Qpg \equiv (\Fpg)^{-1}
:
\overline \B \to \HH
$
is a natural extension of the notion of quantile map to multivariate and functional settings. Associated with these concepts is the \emph{spatial depth}, defined as 
\begin{equation}\label{defSD}
{\rm SD}(x;P)
\equiv 
1-\|\Fpg(x)\|
,\spa 
\forall\ x\in\HH 
,
\end{equation}
which measures the `depth' (equivalently, $1-{\rm SD}(x;P)$ measures the outlyingness) of any location $x\in\HH$ with respect to $P$: A depth close to $0$ indicates that $x$ is far from the bulk of the distribution whereas a depth close to $1$ indicates that $x$ is central for~$P$; see also \cite{Nag2017}. In this paper, we are concerned with understanding when the spatial objects---cdf, quantile, and depth functions---characterize the underlying probability distribution. \smallskip 

These spatial concepts have met a great success and are commonly used in practice (\cite*{Mottonen97, Cha2003, PaiVanB2012, GirStu2015, ChoCha19, DaoStuUss24}) because of their conceptual simplicity and their robustness properties (\cite*{LopRou1991, KonPai2025, KonPai2025a}), and since they can be computed very fast even in high dimensions (\cite*{Frietal2012}). In addition, spatial quantiles are naturally defined in infinite-dimensional Hilbert spaces and even in general Banach spaces (see, e.g., \cite*{Romon2022, PasReid2022}) or manifolds (\cite*{KonPai23}), while other location functionals, because they exploit intrinsically finite-dimensional features of the ambient space, are limited to Euclidean spaces.
% ; this is the case, for instance, with the cdf and quantile notions based on optimal transport (\cite*{HallEspagne21, HalKon24}).  \textcolor{purple}{This is not true \cite*{gonzalezsanz.etl.a.2025monotone} defines OT-based quantiles in general Hilnert spaces }

\smallskip

One of the most important properties of $\Fpg$ in Euclidean spaces, \ie when $\HH=\R^d$, is that it fully characterizes $P$; this provides the basis to design rank-based statistical procedures. The characterization property was first established in \cite*{Kol1997} (see Theorems~2.5 and 2.9 there) 
% through a Fourier transform approach 
by noticing that $\Fpg(x)$ is a convolution of $P$ with the kernel $K(x)=x/\|x\| \I[x\ne 0]$, where $\I$ denotes the indicator function. Their strategy consists in computing the exact form of the Fourier transform $\FFF(K)$ of $K$ and in showing that $\FFF(K)$ is a Borel map on $\R^d\sm\{0\}$ such that $\FFF(K)\neq 0$ almost everywhere. From there, one deduces that the equality $\Fpg(x) = F_Q^\g(x)$ for all $x\in\R^d$ implies $\FFF(K)( \FFF(P) - \FFF(Q)) = 0$ to the effect that $\FFF(P)=\FFF(Q)$ hence also $P=Q$. This argument, however, fails when $\dim \HH=\infty$ since $\FFF(K)$ crucially depends on the dimension $d$ and does not admit a natural limit as $d\to \infty$ (see also \cite*{Kon2025}). In the context of nonparametric testing based on  `distance covariances', \cite*{Lyons.2013.AoP} showed that separable Hilbert spaces are of `strong negative type', \ie for any finite signed measure $\mu$, if $\int \|x-z\|\, d\mu(z)=0$ for all $x$ then $\mu=0$ provided $\mu$ admits first order moments. Under suitable assumptions, the Fréchet differential in $x$ of the previous integral functional is $F_\mu(x)\equiv \int K(x-z)\, d\mu(z)$, so that $\mu=0$ if $F_\mu$ vanishes identically. Their proof is based on a Gaussian variant of the Crofton embedding and is not adaptable to general measures without moment assumptions. We thus engineer a new proof strategy, inspired by ideas in \cite*{GorKol87}, that applies to arbitrary probability measures and is based on an asymptotic coordinate perturbation of $F_\mu$ combined with Carlson's Theorem from complex analysis and Fourier Transform techniques, to establish the following result in general Hilbert spaces:

\begin{Theor}\label{TheorCdf}
    Let $P$ and $Q$ be Borel probability measures on $\HH$. (i) If $\Fpg(x)=F_Q^\g(x)$ for all $x\in\HH$, then $P=Q$. (ii) If $\Fpg(x)=F_Q^\g(x)$ for all $x$ in a dense subset $\DD\subset \HH$, then $P=Q$.
\end{Theor}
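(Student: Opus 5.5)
We will prove the theorem by reducing it to finite-dimensional marginals. Since $\HH$ is separable, a Borel probability measure on $\HH$ is determined by its one-dimensional (equivalently, finite-dimensional) projections. So it suffices to show that $\Fpg$ determines, for every $u\in\HH$ with $\|u\|=1$, the law of $\langle Z,u\rangle$ for $Z\sim P$. Let $\mu=P-Q$ and $F_\mu(x)=\int K(x-z)\,d\mu(z)$, where $K(y)=y/\|y\|\,\I[y\neq 0]$. By hypothesis $F_\mu\equiv 0$, and we must show $\mu=0$.

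Part (ii) reduces to part (i) by a continuity argument. $F_P^\g$ is continuous at every $x$ that is not an atom of $P$: this follows from dominated convergence, since $\|K\|\le 1$ and $K$ is continuous off $0$. More generally, along sequences $x_n\to x$ of the form $x_n=x+t_nv$ with $t_n\downarrow 0$, we have $F_P^\g(x_n)\to F_P^\g(x)+P(\{x\})\,v/\|v\|$. Approximating such sequences from the dense set $\DD$, we first obtain equality off the countable set of atoms of $P$ and $Q$. Directional limits at an atom $x$ then give $P(\{x\})v=Q(\{x\})v$, and hence equality everywhere. This step is routine.

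For (i), the key is an asymptotic coordinate perturbation. Fix a unit vector $e$ and an $x_0$, and consider $x=x_0+te$ with $t\to\infty$. Pairing with $e$ gives
\[
\langle F_\mu(x_0+te),e\rangle=\int \frac{t+\langle x_0-z,e\rangle}{\sqrt{(t+\langle x_0-z,e\rangle)^2+\|\pi_{e^\perp}(x_0-z)\|^2}}\,d\mu(z).
\]
Rather than sending $t\to\infty$, the better move is to exploit the freedom in an orthogonal direction. Take $w\perp e$, $w\perp$ a finite-dimensional subspace $V$, and set $x=x_0+sw$ with $s\to\infty$. Because the components of $Z$ along the infinitely many remaining orthonormal directions become negligible in an averaged sense, rescaled limits of $F_\mu$ produce integrals of kernels of the form $\int (a-\langle z,v\rangle)\,\bigl(\lambda^2+\|a-\pi_V z\|^2\bigr)^{-1/2}\,d\mu(z)$, where the parameter $\lambda>0$ is free. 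Choosing $V$ one-dimensional, this yields
\[
G(a,\lambda)=\int \frac{a-y}{\sqrt{\lambda^2+(a-y)^2}}\,d\nu(y)=0 \quad\text{for all } a\in\R,\ \lambda\ge\lambda_0,
\]
where $\nu$ is the projection of $\mu$ onto the line. This is a convolution identity in $a$ with the kernel $k_\lambda(y)=y/\sqrt{\lambda^2+y^2}$.

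Next we analyse $G$ in the parameter. $G$ extends holomorphically in $\lambda$ (or in $\lambda^2$) to a right half-plane, with bounded, at most exponential type growth, since $|k_\lambda|\le 1$ there up to constants. Carlson's theorem then upgrades vanishing for $\lambda$ in an arithmetic progression, or at integers, to vanishing on the whole half-plane. This matters because the perturbation argument may only deliver a discrete set of $\lambda$'s. Once $G(\cdot,\lambda)\equiv 0$ for all $\lambda>0$, we take a Fourier transform in $a$. The derivative of $k_\lambda$ is the integrable function $\lambda^2(\lambda^2+y^2)^{-3/2}$, whose Fourier transform equals $\lambda|\xi|K_1(\lambda|\xi|)\cdot 2$ (a Bessel function), which never vanishes. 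Hence $\widehat{\nu}(\xi)=0$ for $\xi\ne0$, and $\nu(\R)=0$ trivially, so $\nu=0$. As $e$ ranges over all unit vectors, all one-dimensional marginals of $P$ and $Q$ agree, and the Cramér--Wold device gives $P=Q$.

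The main obstacle is the perturbation step: producing the free parameter $\lambda$ rigorously from $F_\mu\equiv0$ without any moment assumptions. The plan is to take $x=x_0+s\,w_n$ with orthonormal $w_n$ and $s=s_n$ chosen so that $\langle z,w_n\rangle\to 0$ for every fixed $z$. This holds by Bessel's inequality, and dominated convergence is available because $\|K\|\le1$. In the limit, the denominator becomes $\sqrt{s^2+\|x_0-z\|^2}$ after normalising by $s$. Expanding this in powers of $1/s^2$ and using analyticity in $s$, together with Carlson's theorem to handle the dependence on $s$, should isolate the projected identity above.
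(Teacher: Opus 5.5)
There is a genuine gap, and it sits in the step you yourself flag as the main obstacle: the perturbation never produces the projected kernel $(\lambda^2+\|a-\pi_V z\|^2)^{-1/2}$. Take $y_n\perp V$ with $\|y_n\|=\lambda$ and $y_n\wto 0$ (e.g.\ $y_n=\lambda w_n$). For each fixed $z$,
\[
\|x_0+y_n-z\|^2=\|x_0-z\|^2+2\ps{x_0-z}{y_n}+\lambda^2\to\|x_0-z\|^2+\lambda^2 .
\]
Dominated convergence then gives
\[
\lim_{n\to\infty}\ps{F_\mu(x_0+y_n)}{e}=\int\frac{\ps{x_0-z}{e}}{\sqrt{\lambda^2+\|x_0-z\|^2}}\,d\mu(z).
\]
The \emph{full} distance $\|x_0-z\|$ appears in the denominator. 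The components of $z$ orthogonal to $V$ are not negligible in any averaged sense. For any evaluation points $x_n$, $\|x_n-z\|^2$ contains the term $\|\pi_{V^\perp}z\|^2$, and passing to weak limits can only remove the linear cross term. So the identity $G(a,\lambda)=0$ for the marginal $\nu$ is never obtained, and the Bessel/Fourier step and the Cram\'er--Wold step have nothing to act on. Your suggested remedy does not help either. Expanding $(s^2+\|x_0-z\|^2)^{-1/2}$ in powers of $1/s^2$ requires moments of $\mu$, which are not assumed. Even with moments, its coefficients are full-norm quantities such as $\int\ps{x_0-z}{e}\|x_0-z\|^{2k}\,d\mu(z)$, not moments of $\nu$.

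The missing idea is to abandon one-dimensional marginals and use radial information. Pairing with $w_n$ instead of $e$ gives, for all $x_0\in\HH$ and $c>0$,
\[
\int\frac{d\mu(z)}{\sqrt{\|x_0-z\|^2+c}}=0 .
\]
This is a transform of the law of $\|Z-x_0\|$, and the paper shows it is injective. It differentiates in $c$ to get $\int(\|x_0-z\|^2+c)^{-1/2-n}\,d\mu(z)=0$ for all $n$, then applies Carlson's theorem and a Fourier argument to the law of $\log(\|Z-x_0\|^2+c)$. Hence $P$ and $Q$ agree on all closed balls, and therefore on all Borel sets. Your perturbation also needs infinitely many orthonormal directions, so the case $\dim\HH<\infty$, which the statement covers, is left untreated; the paper handles it separately via the known Fourier-based finite-dimensional characterization.

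The rest of your proposal is sound. The reduction of (ii) to (i) via directional limits at atoms is correct, and it even spares you the paper's Zorn-lemma construction of a basis avoiding atoms. Your Fourier step would also work with a single $\lambda$, so Carlson is unnecessary there. But this only helps once a genuine one-dimensional convolution identity is available.
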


In particular, when $\Fpg$ is invertible, the quantile map $\Qpg$ thus also characterizes~$P$. Whether the spatial depth in (\ref{defSD}) characterizes $P$ is a much more delicate question, as none of the proof strategies employed so far apply to this object due to the presence of the norm. %\com{This property has even been thought to be wrong (REF ?).} 
To address this question, we devise a new method inspired by \cite*{PerezSalasSavid-Math-Prog.2021} where, perhaps surprisingly, it is shown that a convex function is fully determined by its minimal-norm subgradient at all points. We leverage this fact by observing that $\Fpg$ is the differential of the convex map
\begin{equation}\label{defHp}
    h_P(x) 
    \equiv 
    \int_{\HH} \big\{ \|x-z\| - \|z\| \big\}\, dP(z)
    ;
\end{equation}
subtracting $\|z\|$ in the integral allows to make sense of $h_P$ without moment assumptions on $P$. Even when $h_P$ is not differentiable, the point $\Fpg(x)\in\HH$ still belongs to the subgradient $(\partial h_P)(x)$ of $h_P$ at $x$. However, $\Fpg(x)$ need not be of minimal norm within $(\partial h_P)(x)$ so that the result alluded to above cannot be applied as such. We thus adapt ideas in \cite*{PerezSalasSavid-Math-Prog.2021} to our setting and evaluate the difference $\Fpg - F_Q^\g$ along a suitable gradient flow, while taking care of  issues at atoms of $P$.
% , which allows us to prove the following:

\begin{Theor}\label{TheorDepth}
    Let $P$ and $Q$ be Borel probability measures on $\HH$.
    \begin{itemize}
    \item[i)] If $\|\Fpg(x)\|=\|F_Q^\g(x)\|$ for all $x\in\HH$, then $P=Q$.
    
    \item[ii)] If $\|\Fpg(x)\|=\|F_Q^\g(x)\|$ for all $x$ in a dense subset $\DD\subset \HH$ and $\dim \HH\ge 2$, then $P \ne Q$ only if there exists $m_P,m_Q\in\HH$ distinct and $\beta\in (1/2,1]$ such that 
    $$
    P
    =
    \beta \delta_{m_P} + (1-\beta) \delta_{m_Q}
    ,\quad 
    \text{and}\quad
    Q
    =
    (1-\beta) \delta_{m_P} + \beta \delta_{m_Q}
    ,
    $$
    where $\delta_x$ denotes the Dirac probability measure at $x$. 

    \item[iii)] If $\|\Fpg(x)\|=\|F_Q^\g(x)\|$ for all $x$ in a dense subset $\DD\subset \HH$ and $\dim \HH = 1$, then $P \ne Q$ only if $P$ and $Q$ have unique medians $m_P$ and $m_Q$ (resp.), $P$ and $Q$ coincide on the Borel subsets of $\HH\sm\{m_P,m_Q\}$, they give no mass to the open interval joining $m_P$ and $m_Q$, and $P[\{m_P\}]>0$ and $Q[\{m_Q\}]>0$.
    \end{itemize}
\end{Theor}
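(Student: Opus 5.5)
The plan is to reduce everything to Theorem~\ref{TheorCdf} by showing that $\Fpg=F_Q^\g$ on a dense set. The tool is the convex potential $h_P$ of (\ref{defHp}), together with the gradient-flow idea of \cite*{PerezSalasSavid-Math-Prog.2021}.

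\textbf{Preliminary facts on $h_P$.}
\begin{itemize}
\item $h_P$ is convex and $1$-Lipschitz.
\item $\Fpg(x)\in\partial h_P(x)$ for every $x$. Indeed, each map $x\mapsto\|x-z\|$ has $(x-z)/\|x-z\|$ as a subgradient, and we may take $0$ when $x=z$.
\item $h_P$ is Gâteaux differentiable exactly at points $x$ with $P[\{x\}]=0$, and there $\nabla h_P(x)=\Fpg(x)$. At an atom one has $\partial h_P(x)=\Fpg(x)+P[\{x\}]\,\overline{\B}$.
\item Since $P$ has at most countably many atoms, the set $A_P\cup A_Q$ of atoms of $P$ or $Q$ is countable. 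Off this set, $\Fpg=\nabla h_P$ is the minimal-norm subgradient.
\end{itemize}

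\textbf{Step 1: a gradient-flow identity.} Write $u=h_P-h_Q$. Consider the steepest-descent curve $\dot x(t)=-\nabla h_P(x(t))$, i.e.\ $\dot x\in-\partial h_P(x)$ with the minimal-norm selection. This is well defined for convex Lipschitz functions in Hilbert space by Brezis' theory. Along it, $\frac{d}{dt}h_P(x(t))=-\|\Fpg(x(t))\|^2$ for a.e.\ $t$, using the minimal-norm property. For $h_Q$ we get $\frac{d}{dt}h_Q(x(t))\ge-\|F_Q^\g(x(t))\|\,\|\Fpg(x(t))\|$ whenever $x(t)$ is not an atom of $Q$, by Cauchy--Schwarz. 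Under the hypothesis $\|\Fpg\|=\|F_Q^\g\|$ this gives $\frac{d}{dt}u(x(t))\le 0$. Equality holds only if $\nabla h_Q(x(t))$ is parallel to $\nabla h_P(x(t))$ with the same direction, that is, only if $\Fpg(x(t))=F_Q^\g(x(t))$.

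Exchanging the roles of $P$ and $Q$ gives the reverse monotonicity along the $Q$-flow. The aim is then the argument of \cite*{PerezSalasSavid-Math-Prog.2021}: both $h_P$ and $h_Q$ are bounded below up to linear growth, and both flows tend towards the respective minimizers, which are the spatial medians. Comparing the two potentials along the flows should give $h_P-h_Q\equiv\mathrm{const}$. Differentiating then yields $\Fpg=F_Q^\g$ off the atoms, and Theorem~\ref{TheorCdf}(ii) gives $P=Q$.

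\textbf{Main obstacle: atoms.} The delicate point is that the flow can reach an atom of $Q$, or sit at an atom of $P$. There $\Fpg(x)$ is not the minimal-norm subgradient: the median is typically a point where $\Fpg(m)\ne0$ while $0\in\partial h_P(m)$. I would handle this as follows.
\begin{itemize}
\item The flow reaches its minimizer in finite time at rate bounded below by $\|\nabla h_P\|$.
\item Each atom is visited at most on a null set of times, unless it is a stationary point, i.e.\ a median.
\item So only the medians $m_P,m_Q$ obstruct the argument. Away from them, the local identity $u=\mathrm{const}$ propagates from a neighbourhood of infinity. There $\Fpg(x)$ and $F_Q^\g(x)$ both behave like $x/\|x\|$, and the growth $h_P(x)-\|x\|\to -\int\ldots$ fixes the constant.
\end{itemize}

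\textbf{The three parts.}
\begin{itemize}
\item For part (i), the norms agree at every point, including atoms. At a median $m$ this gives $\|\Fpg(m)\|=\|F_Q^\g(m)\|$, and this forces the atom masses to match. Hence $\nabla$ agrees everywhere and $P=Q$.
\item For parts (ii)--(iii), we only know agreement on $\DD$. By continuity of $\|\Fpg\|$ off atoms, we get agreement off $A_P\cup A_Q$, and the argument above gives $P=Q$ on $\HH\sm\{m_P,m_Q\}$. It remains to analyse the possible redistribution of mass between the two medians, consistent with $\|\Fpg\|=\|F_Q^\g\|$ off those points. This is a finite computation.
\item In dimension $\ge2$, points off the segment $[m_P,m_Q]$ see the directions to $m_P$ and $m_Q$ with different weights. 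The equality of norms then forces the remaining mass to vanish and the weights to swap, giving the two-Dirac form with $\beta>1/2$, which is needed for $m_P$ to be the unique median.
\item In dimension $1$, norms are absolute values of $2F-1$. The only freedom is a sign flip, which is possible exactly on the gap between the medians; this gives the description in (iii).
\end{itemize}
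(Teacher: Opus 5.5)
There is a genuine gap, and it sits exactly where the content of (ii)--(iii) lies. Your Step~1 only gives one-sided information: $h_P-h_Q$ is non-increasing along the $h_P$-flow and non-decreasing along the $h_Q$-flow.

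To upgrade this to constancy in the P\'erez-Aros--Salas--Vilches manner, you need the $h_P$-flow to be a minimizing curve for $h_Q$, via $\|F_Q^\g(x(t))\|=\|\Fpg(x(t))\|\to 0$. This fails whenever the flow stops at an atom of $P$ that is a median of $P$, because there $\|F_Q^\g\|$ is not controlled.

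The two-point example in (ii) shows this is not a technicality. There every $h_P$-flow reaches $m_P$, and every $h_Q$-flow reaches $m_Q$, in finite time. So your monotonicity only yields $(h_P-h_Q)(m_P)\le (h_P-h_Q)(x_0)\le (h_P-h_Q)(m_Q)$, and this sandwich is strict: $(h_P-h_Q)(m_Q)-(h_P-h_Q)(m_P)=2(2\beta-1)\|m_P-m_Q\|>0$. Moreover, $\Fpg-F_Q^\g=(2\beta-1)\big(\frac{x-m_P}{\|x-m_P\|}-\frac{x-m_Q}{\|x-m_Q\|}\big)$ vanishes only on the line through $m_P$ and $m_Q$. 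Hence, when $\dim\HH\ge 2$, your claim that $h_P-h_Q$ is locally constant away from the medians is false on every open set. Nothing in your plan actually produces ``$P=Q$ on $\HH\sm\{m_P,m_Q\}$''.

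Your auxiliary bullets do not repair this:
\begin{itemize}
\item A flow of $h_P$ need not reach a minimizer in finite time. For a Gaussian $P$ on $\R^2$, $\Fpg$ is Lipschitz near the median, so the flow only converges asymptotically.
\item The flows run towards the medians, not in from infinity.
\item The additive constant is already fixed by $h_P(0)=h_Q(0)=0$, so fixing it at infinity addresses the wrong issue.
\end{itemize}

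The missing idea is the reduction to measures on a common line. The paper runs a single flow, for $\Phi=h_P+h_Q$. Along it, off atoms, $\frac{d}{dt}(h_P-h_Q)(u(t))=\|F_Q^\g(u(t))\|^2-\|\Fpg(u(t))\|^2=0$. So $h_P-h_Q$ is exactly constant until the flow hits $\argmin\Phi$, since atoms are visited at only countably many earlier times.

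The terminal value is independent of $x_0$ in two situations:
\begin{itemize}
\item when $\argmin\Phi$ is a singleton, which holds unless $P$ and $Q$ are supported on a common line, since otherwise $(P+Q)/2$ has a unique spatial median;
\item when ${\rm med}(P)$ and ${\rm med}(Q)$ are nested.
\end{itemize}
In both cases $h_P=h_Q$, and Theorem~\ref{TheorCdf} gives $P=Q$.

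Only after this reduction does the problem become one-dimensional. On the line, your sign-flip picture for $2F-1$ is the right one for (iii), once the nested case has been used to exclude non-degenerate median intervals.

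In $\dim\HH\ge 2$, however, removing the common residual mass $\mu$ outside $[m_P,m_Q]$ is not a finite computation, because $\mu$ is arbitrary. The paper expands the norm identity to second order in a direction orthogonal to the line. It then uses the concavity of $x\mapsto\int\|x-z\|^{-1}\,d\mu(z)$ on the segment to conclude $\mu=0$.

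Finally, in (i), the equality $\|\Fpg(m)\|=\|F_Q^\g(m)\|$ does not by itself force equal atom masses. Approach $x$ along $x+sv$ through non-atoms to get $\|\Fpg(x)+P[\{x\}]v\|=\|F_Q^\g(x)+Q[\{x\}]v\|$ for every $v\in\S$. Since the expanded identity must hold for both $v$ and $-v$, you obtain $P[\{x\}]^2-Q[\{x\}]^2=\|F_Q^\g(x)\|^2-\|\Fpg(x)\|^2=0$.
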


This result is striking as it establishes that any probability measure
% , initially characterized by a whole `vector field' $\Fpg$, 
is in fact entirely determined by the scalar-valued field $\{\|\Fpg(x)\| : x\in\HH\}$, thus showing that the directional information contained in $\Fpg$ is superfluous. 
% Similar results are difficult to establish for other depth functionals proposed in the literature, or may even be known to fail such as for the Tukey (or halfspace) depth (see~\cite*{Nag2021}). Nevertheless, 
The ideas underlying the proof of \Cref{TheorDepth} are not specific to the case of spatial depth; these techniques 
% (or a suitable modification thereof) 
apply more generally to any functional expressed as the differential of a convex map. In particular, results similar to \Cref{TheorDepth} hold for the depth concept based on measure transportation from \cite*{HallEspagne21}; see also \Cref{remark:OT-charact} below. Theorem~\ref{TheorDepth} is a strong and important result, as similar statements are known to fail for other depth functionals proposed in the literature; see, e.g., \cite*{Nag2021} on the failure of the characterization for Tukey's (or halfspace) depth.\smallskip 
 %Our proof strategy, which is based on some recent results on convex analysis \citep{PerezSalasSavid-Math-Prog.2021}, applies to the transport-based depth function (not only the quantiles) for finite and infinite dimensional spaces, see \Cref{remark:OT-charact}. 
% The first contribution of this paper is to provide an affirmative answer to both  both open problems. Our main result, \Cref{TheorCaracterisation}, shows that if $P,Q\in \PPP(\HH)$  are such that $\|\Fpg(x)\|=\|F_Q^\g(x)\|$ for all $x\in\HH$,  the $P=Q$. This obviously implies that the spatial depth function  characterizes the underlying probability measure.   
% This is an important property for the spatial depth  which fails (or is unknown) for most of the depth functionals proposed in the literature  (see e.g.~\cite*{Nag2021} for the Tukey depth).  Our proof strategy, which is based on some recent results on convex analysis \citep{PerezSalasSavid-Math-Prog.2021}, applies to the transport-based depth function (not only the quantiles) for finite and infinite dimensional spaces, see \Cref{remark:OT-charact}. 

Finally, we investigate the mapping properties and regularity of the spatial cdf $\Fpg$. In finite dimension, the map $\Fpg$ defines a homeomorphism between the whole space and its open unit ball under mild assumptions (see, e.g., Theorem~6.2 in \cite*{KonPai1} with $\rho(t)\equiv t$).
% ; similar results were also established in related settings involving the more general $\rho$-quantiles (\cite*{KonPai1}) and regularized spatial quantiles (\cite*{KonStu2025}). 
Below, we extend these results to the infinite-dimensional case. The regularity of the cdf $\Fpg$ and quantile map $\Qpg$ were studied in \cite*{Kon2025} in finite dimension. It is shown there that, in dimension $d$, if $P$ admits a density $p\in L^\infty(\R^d)$ then $\Fpg\in C^{d-1}(\R^d)$ but no more; obtaining further regularity can only be achieved by requiring additional smoothness for $p$ (see Proposition~4.2 there for a counter-example). In infinite dimension, one can conjecture that if the densities of all finite-dimensional marginals are bounded, then $\Fpg$ is smooth by formally taking $d=\infty$ above. We now establish this rigorously under weaker assumptions: 
% The second contribution of this paper involves some analytical properties of $\Fpg$ for non-singular $P$. First we show that if $P$ is non-atomic and not supported on a single line of $\HH$, then the quantile and distribution functions are inverses of each other. This complements some known results in the literature (see  \cite*{Kem1987,Romon2022,KonPai2025}).  Furthermore, under some mild integrability conditions, we show that $\Fpg$ defines a diffeomorphism between $\HH$ and the open unit ball $\mathbb{B}$. 

\begin{Theor}\label{TheorInvertible}
Let $P$ be a Borel probability measure on $\HH$. Assume that there exists a linear subspace $V \subset \HH$ of finite dimension~$d\ge 2$, with corresponding orthogonal projection $\Pi_V:\HH\to V$, such that the pushforward $\Pi_V \# P$ admits a (non-identically zero) density with respect to the Lebesgue measure on $V$ that is bounded on each compact subset of~$V$. Then, $\Fpg$ defines a $C^\infty$-diffeomorphism between $\HH$ and its open unit ball. In particular, all quantile contours of $P$ are $C^\infty$-diffeomorphic to the unit sphere of $\HH$.
% In addition, its inverse $\Qpg$ is continuous from $(\B, \|\cdot\|)$ to $(\HH, \|\cdot\|)$.
% weak-to-weak continuous, \ie $\Qpg(\alpha_k u_k)\wto \Qpg(\alpha u)$  when $\B\supset(\alpha_k u_k)\wto \alpha u\in\B$, and strong-to-strong continuous, \ie $\Qpg(\alpha_k u_k)\to \Qpg(\alpha u)$ when $\alpha_k u_k\to \alpha u$.
% $\Qpg: \mathbb{B}\to \HH$ defines a $\mathcal{C}^\ell$-diffeomorphism with inverse $\Fpg$.  
\end{Theor}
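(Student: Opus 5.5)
The plan has three parts: show that $\Fpg$ is $C^\infty$, show that its differential is everywhere invertible, and show that $\Fpg$ is a bijection onto the open unit ball. The inverse function theorem on Banach spaces then upgrades this to a $C^\infty$-diffeomorphism.

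\emph{Smoothness.} Write $z=(v,w)$ with $v=\Pi_V z$ and $w=(I-\Pi_V)z$. For $x\in\HH$ set $a=\Pi_Vx$ and $b=x-a$. By disintegration,
$$
\Fpg(x)=\int \int_V \frac{x-(v+w)}{\sqrt{\|a-v\|^2+\|b-w\|^2}}\,p(v\mid w)\,dv\,d\mu(w),
$$
where $\mu$ is the law of $w$. The conditional densities $p(\cdot\mid w)$ need not be bounded, so instead of disintegrating I will work directly with $\Pi_V\#P$. The key point is that for fixed $w$ the kernel $k_w(x,v)=(x-v-w)/\|x-v-w\|$ is smooth in $x$ unless $v=a$ and $w=b$. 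The derivative $D^k_x k_w$ is bounded by $C_k\|x-v-w\|^{-k}\le C_k\|a-v\|^{-k}$. Since this blows up too fast in $v\in V$ for $k\ge d$, a direct differentiation under the integral sign fails; the kernel itself only gives $C^{d-1}$, as in \cite{Kon2025}.

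To get $C^\infty$, I will use the fact that $\HH$ is infinite-dimensional only through the $V$-directions, and move derivatives onto the parameter $a$. Splitting
$$
\Fpg(x)=\int G(a-v, b-w)\,dP(z), \qquad G(s,t)=\frac{(s,t)}{\sqrt{\|s\|^2+\|t\|^2}},
$$
derivatives in the $V^\perp$-directions of $x$ hit $t$. Each such derivative produces a factor of order $\|(s,t)\|^{-1}$, integrable in $s\in V$ against a locally bounded density only up to order $d-1$. So this approach naively gives only $C^{d-1}$, which is the main obstacle. I would first try to resolve it by using that the integral $\int_V G(a-v,t)\,q(v)\,dv$ is smooth in $t$ uniformly for $t\neq0$: this holds because the integrand is smooth when $t\ne0$ with derivative bounds that are $\|t\|$-dependent but integrable. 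The contribution from $t$ near $0$, i.e.\ $w$ near $b$, I would control by showing that, after integrating in $v$, the function $t\mapsto\int_V\|(s,t)\|\,q(a-s)\,ds$ is a radial-in-$t$ function whose singularity at $t=0$ is mild. If this turns out to be genuinely limited, I would fall back to proving that $\Fpg$ is the gradient of $h_P$ and establish smoothness of $h_P$ by the same decomposition.

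\emph{Invertibility of the differential.} The derivative of $\Fpg$ is
$$
D\Fpg(x)=\int \frac{1}{\|x-z\|}\Big(I-u_zu_z^\top\Big)\,dP(z), \qquad u_z=\frac{x-z}{\|x-z\|},
$$
which is a positive semidefinite operator. For a unit vector $e$, the quadratic form $\langle D\Fpg(x)e,e\rangle$ vanishes only if $P$-a.s.\ $u_z=\pm e$, i.e.\ $P$ is carried by the line $x+\R e$. This is impossible because $\Pi_V\#P$ has a density with $d\ge2$. To obtain a coercive lower bound I would use the compactness of the unit sphere of $V$ together with a bound on the orthogonal complement: for $e\perp V$, the quantity $\|x-z\|^{-1}(1-\langle u_z,e\rangle^2)$ has positive mass bounded below.

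\emph{Bijectivity.} Injectivity follows from strict convexity of $h_P$ along every line, which holds since $P$ is not carried by a line, together with $\Fpg=\nabla h_P$. For the image, $\|\Fpg(x)\|<1$ because $P$ is not a Dirac. Surjectivity onto the open ball comes from minimizing $h_P(x)-\langle u,x\rangle$ for $\|u\|<1$: this functional is coercive, since $h_P(x)\ge\|x\|-2\int\|z\|\wedge\ldots$ is handled by dominated convergence giving growth $(1-\|u\|)\|x\|$. It is also weakly lower semicontinuous by convexity, so a minimizer exists and satisfies $\Fpg(x)=u$. The statement about quantile contours then follows from restricting the diffeomorphism to spheres of radius $r<1$.
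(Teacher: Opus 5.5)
The gap is in the smoothness step, and it cannot be closed: the obstruction you found in the $V^\perp$-directions is real. Under the stated hypothesis (a single $d$-dimensional marginal with locally bounded density), $\Fpg$ need not be $C^d$, let alone $C^\infty$.

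Here is a counterexample. Take $\dim\HH\ge 3$ and let $P$ be the standard Gaussian measure on a $2$-dimensional subspace $V$, carried by $V$. Then $\Pi_V\#P$ has a smooth bounded density. For a unit vector $e\perp V$ and $s\in\R$, one has $\|se-z\|^2=s^2+\|z\|^2$ for $z\in V$, so
\[
\langle \Fpg(se),e\rangle=\int\frac{s}{\sqrt{s^2+\|z\|^2}}\,dP(z)=s\,e^{s^2/2}\int_{|s|}^\infty e^{-w^2/2}\,dw=\sqrt{\pi/2}\,s-s|s|+O(s^3).
\]
The second derivative of this function jumps at $s=0$, so $\Fpg$ is not even $C^2$ on $\HH$.

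In your notation, the region $t=b-w\approx 0$ is exactly where all the mass sits, and the singularity there is not mild: it produces the term $s|s|$. The fallback through $h_P$ cannot help either, since smoothness of $h_P$ would give smoothness of $\Fpg=\nabla h_P$.

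What the hypothesis actually yields is $C^{d-1}$, and that is all the paper's argument delivers. \Cref{theorem:Manifold} with $p=\infty$ only allows $\ell\le d-1$. It uses $\|x-z\|\ge\|\Pi_V(x-z)\|$ and H\"older's inequality on $V$ to get $\sup_{x\in E}\int\|x-z\|^{-1-\eta}\,dP(z)<\infty$ on bounded sets $E$. This gives Fr\'echet differentiability with operator-norm continuous derivative, and the Banach-space inverse function theorem then concludes. The paper obtains a $C^\infty$ conclusion only under (\ref{eq:pVn}), i.e.\ with marginal densities on subspaces $V_n$ of unbounded dimension. So your ``naive'' estimate is the correct argument, but for the $C^{d-1}$ statement (or for $C^\infty$ under that stronger hypothesis), not for the theorem as written.

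For the corrected statement, the rest of your plan is sound after two repairs.

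First, the differential. Positivity of $\langle D\Fpg(x)e,e\rangle$ for each unit $e$ does not give invertibility in infinite dimension, and compactness of the unit sphere of $V$ does not control mixed directions. Instead, write $D\Fpg(x)=cI-K$ with $c=\int\|x-z\|^{-1}\,dP(z)<\infty$ and $K=\int\|x-z\|^{-1}u_z\otimes u_z\,dP(z)$, which is positive and trace class. Fredholm's alternative then reduces invertibility to injectivity; this is the alternative route the paper mentions after \Cref{theorem:Manifold}. Equivalently, suppose $\|e_k\|=1$, $e_k\wto e$ and $\langle D\Fpg(x)e_k,e_k\rangle\to 0$. Dominated convergence gives $\int\|x-z\|^{-1}(1-\langle u_z,e\rangle^2)\,dP(z)=0$ with $\|e\|\le 1$, which forces $P$ onto the line $x+\R e$.

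Second, the bound $\|\Fpg(x)\|<1$. It does not follow from $P$ not being a Dirac mass: a non-degenerate measure on a ray issued from $x$ gives norm $1$. It follows because $P$ is non-atomic and not carried by a line.

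You should also state explicitly the uniform integrability bound. It is what upgrades G\^ateaux to Fr\'echet differentiability and gives continuity of $x\mapsto D\Fpg(x)$.
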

% The proof of Theorem~\ref{TheorInvertible}  is given in Section~\ref{Section:soothbes}, where we establish a slightly more general regularity result allowing finite-dimensional marginals with suitable local integrability, beyond the locally bounded case stated here. 

Surprisingly, Theorem~\ref{TheorInvertible}  only requires that there exists at least one finite dimensional marginal with a bounded density to guarantee smoothness of $\Fpg$. These results complement existing literature (\cite*{Kem1987,Romon2022, PasReid2022, KonPai2025}) on spatial quantiles in infinite dimension. 

\section{Proof of Theorem \ref{TheorCdf}} \label{sec:ProofsCdf}

Since $\DD$ is dense in $\HH$, then the equality $\Fpg(x)=F_Q^\g(x)$ holds at each $x\in\HH$ that is a continuity point of both $\Fpg$ and $F_Q^g$. The next lemma precisely characterizes  this set.

\begin{Lem}\label{LemCont}
    Let $P$ be a Borel probability measure on $\HH$. Then $\Fpg$ is continuous at $x\in\HH$ if and only if $P[\{x\}]=0$.
\end{Lem}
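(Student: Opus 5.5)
We prove the two implications separately, writing $K(u)=u/\|u\|\,\I[u\neq 0]$ so that $\Fpg(y)=\int K(y-z)\,dP(z)$.

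\emph{Sufficiency.} Assume $P[\{x\}]=0$ and let $y_n\to x$. The integrand $z\mapsto K(y_n-z)$ is bounded by $1$ in norm. For every $z\neq x$ we have $y_n-z\to x-z\neq 0$, and $u\mapsto u/\|u\|$ is continuous away from $0$. Hence $K(y_n-z)\to K(x-z)$ for all $z\in\HH\sm\{x\}$, which is a set of full $P$-measure. The Bochner dominated convergence theorem then gives $\Fpg(y_n)\to\Fpg(x)$ in norm. Since $\HH$ is a metric space, sequential continuity suffices. (Alternatively, one can apply scalar dominated convergence to $\langle \Fpg(y_n)-\Fpg(x),e\rangle$ for unit $e$. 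This only gives weak convergence, so to get norm convergence one should instead bound $\|\Fpg(y_n)-\Fpg(x)\|\le\int\|K(y_n-z)-K(x-z)\|\,dP(z)$ and apply scalar dominated convergence to the right-hand side.)

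\emph{Necessity.} Assume $p\equiv P[\{x\}]>0$. We show that $\Fpg$ has different one-sided limits along a line through $x$. Fix a unit vector $e\in\HH$ and set $y_t=x+te$ for $t\in\R\sm\{0\}$. Split $P=p\,\delta_x+P'$ with $P'[\{x\}]=0$. Then
$$
\Fpg(y_t)=p\,\mathrm{sign}(t)\,e+\int K(y_t-z)\,dP'(z).
$$
By the sufficiency argument applied to $P'$ (the argument used only that $x$ is not an atom, and works for finite measures), the second term converges to $\int K(x-z)\,dP'(z)=\Fpg(x)$ as $t\to 0$. Therefore
$$
\lim_{t\downarrow 0}\Fpg(y_t)-\lim_{t\uparrow 0}\Fpg(y_t)=2p\,e\neq 0,
$$
so $\Fpg$ is discontinuous at $x$. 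In fact $\Fpg(y_t)\to\Fpg(x)\pm p\,e$, so neither one-sided limit equals $\Fpg(x)$ itself.

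There is no real obstacle. The only point needing care is that dominated convergence must hold in norm in the possibly infinite-dimensional $\HH$; this is handled by dominating the scalar quantity $\|K(y_n-z)-K(x-z)\|\le 2$.
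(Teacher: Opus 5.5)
Your proof is correct and takes essentially the paper's route. The paper writes, for any $x_k\to x$ with $x_k\neq x$, the difference $\Fpg(x_k)-\Fpg(x)$ as the atom term $\frac{x_k-x}{\|x_k-x\|}P[\{x\}]$ plus an integral over $\{z\neq x\}$ that vanishes by dominated convergence, which settles both implications at once; you instead separate the two implications, exhibit the jump $2P[\{x\}]e$ along a line, and add the explicit bound $\|\Fpg(y_n)-\Fpg(x)\|\le\int\|K(y_n-z)-K(x-z)\|\,dP(z)$, which makes norm (not merely weak) convergence transparent in infinite dimension.
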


\begin{Proof}{Lemma \ref{LemCont}}
    Fix an arbitrary sequence $(x_k)\subset \HH$ such that $x_k\neq x$ for all $k$ and $x_k\to x$. Then, we have 
    $$
    \Fpg(x_k)
    -
    \Fpg(x) 
    =
    \frac{x_k-x}{\|x_k-x\|}P[\{x\}]
    +
    \int \Big\{ \frac{x_k-z}{\|x_k-z\|}\I[z\neq x_k] - \frac{x-z}{\|x-z\|} \Big\} \I[z\neq x]\, dP(z) 
    .
    $$
    The dominated convergence theorem entails that the integral in the last display converges to $0$, so that $\Fpg(x_k)\to \Fpg(x)$ if and only if $P[\{x\}]=0$.
\end{Proof}
\vspace{2mm}

We denote the collection of atoms of $P$ and $Q$ by 
$$
A_{P,Q}
\equiv
\{x \in \HH : P[\{x\}] + Q[\{x\}] > 0\}
.
$$
Because $\DD$ is dense in $\HH$, and since $\Fpg$ and $F_Q^\g$ are both continuous on $\HH\sm A_{P,Q}$, we deduce that 
\begin{equation}\label{eq:APQ}
\Fpg(x)=F_Q^\g(x)
,\spa 
\forall\ x\in\HH\sm A_{P,Q}
.
\end{equation}
We now distinguish the finite- and infinite-dimensional cases. \smallskip 

Assume first that $\HH$ is finite-dimensional, $\HH=\R^d$, say. Because $A_{P,Q}$ is at most countable, then $A_{P,Q}$ has Lebesgue measure $0$, so that $\Fpg$ and $F_Q^\g$ coincide (Lebesgue) almost everywhere. In particular, they define the same Schwartz distribution on $\R^d$; see, e.g., Section~A.1.1 in \cite*{Kon2025Supp} for further details on the use of distributions (generalized functions) in this setting. In particular,   Theorem~3.2 in \cite*{Kon2025} entails that $P=Q$ on the Borel subsets of $\R^d$. This yields the conclusion of Theorem~\ref{TheorCdf} when $\HH$ is finite-dimensional.\smallskip

In the remainder of this section, we thus assume that $\HH$ is infinite-dimensional. Because $\HH$ is separable, let $\{e_k : k\ge 0\}$ be a (countable) orthonormal basis of $\HH$. For the argument that will follow, we will need to choose this basis in such a way that some of its translates and dilates do not intersect $A_{P,Q}$. The fact that one such basis exists is established in the next lemma. As is common in this setting, the proof makes use of the axiom of choice through Zorn's lemma.

\begin{Lem}\label{lemma:nice-basis}
    For all $x\in \HH$ and $c>0$, there exists a countable orthonormal basis $\{e_k : k\ge 0\}$ of $\HH$ such that  $\{x+c e_k : k\ge 0\} \subset \HH\sm A_{P,Q}$.
\end{Lem}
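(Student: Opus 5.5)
Since $P$ and $Q$ are finite measures, the set $A_{P,Q}$ is at most countable. Setting $B\equiv\{(a-x)/c : a\in A_{P,Q}\}$, the claim becomes: there is a countable orthonormal basis of $\HH$ avoiding the countable set $B$. Only elements of $B$ lying on the unit sphere $\S\equiv\{u\in\HH:\|u\|=1\}$ matter, so we may replace $B$ by $B\cap\S$. The plan is to build the basis inductively by a Gram--Schmidt-type construction, using Zorn's lemma to organise it. A direct induction along a fixed dense sequence, described below, would in fact avoid Zorn.

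Fix a dense sequence $(y_n)_{n\ge 0}$ in $\HH$. We construct orthonormal vectors $e_0,e_1,\dots$ outside $B$ such that $y_n\in \overline{\mathrm{span}}\{e_0,\dots,e_{m_n}\}$ for suitable indices $m_n$. Suppose $e_0,\dots,e_{m}$ have been chosen and $y_n$ is the next vector not already in their span. Let $W=\mathrm{span}\{e_0,\dots,e_m\}$, and let $w$ be the normalised component of $y_n$ orthogonal to $W$.

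Since $\dim\HH=\infty$, we can pick a unit vector $v\perp W\oplus\mathrm{span}\{w\}$. Consider the circle $\{\cos\theta\, w+\sin\theta\, v : \theta\in[0,2\pi)\}$. It contains uncountably many unit vectors orthogonal to $W$, so it contains two orthogonal ones, $f_1=\cos\theta\,w+\sin\theta\,v$ and $f_2=-\sin\theta\,w+\cos\theta\,v$, both outside the countable set $B$. Indeed, only countably many $\theta$ are bad for $f_1$ and countably many for $f_2$.

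Set $e_{m+1}=f_1$ and $e_{m+2}=f_2$. Then $w\in\mathrm{span}\{e_{m+1},e_{m+2}\}$, so $y_n\in\mathrm{span}\{e_0,\dots,e_{m+2}\}$.

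Iterating, we obtain an orthonormal family $\{e_k\}$ whose closed span contains every $y_n$. That closed span is therefore all of $\HH$ by density, so $\{e_k\}$ is a countable orthonormal basis with $e_k\notin B$. Equivalently, $x+ce_k\notin A_{P,Q}$ for all $k$.

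The initial step is handled the same way with $W=\{0\}$. If one prefers a Zorn's lemma formulation, apply it to orthonormal families disjoint from $B$ ordered by inclusion. Given a maximal family $\mathcal{E}$, the rotation argument shows its orthogonal complement must be trivial: otherwise we could pick an orthogonal pair in that complement and use the circle construction to add an element avoiding $B$. Separability then makes $\mathcal{E}$ countable.

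The main point requiring care, though not hard, is the counting argument in the two-dimensional rotation. We need that for each $b\in B$ at most one $\theta\in[0,2\pi)$ gives $f_1=b$, and likewise at most one gives $f_2=b$. This holds because the map $\theta\mapsto f_i(\theta)$ is injective on $[0,2\pi)$. Hence the set of excluded angles is countable and a good $\theta$ exists.
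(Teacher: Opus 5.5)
Your main argument is correct, and it takes a different route from the paper. You run a direct induction along a dense sequence $(y_n)$, and at each stage you add the rotated orthonormal pair $f_1(\theta),f_2(\theta)$ in the plane spanned by the new Gram--Schmidt direction $w$ and an auxiliary unit vector $v\perp W\oplus\mathrm{span}\{w\}$. This is complete in the infinite-dimensional setting where the lemma is used.

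The paper instead uses Zorn's lemma. After normalising to $x=0$, $c=1$, it takes a maximal countable orthonormal system $U\subset\HH\setminus A_{P,Q}$ and splits according to $\dim\,(\overline{\mathrm{span}}\,U)^\perp$:
\begin{itemize}
\item If this dimension is at least $2$, one more unit vector avoiding the countable set can be added, which contradicts maximality.
\item If it equals $1$, with complement spanned by $x_0$, the paper removes some $x_1\in U$. It replaces $x_1$ by an orthonormal pair in $\mathrm{span}\{x_0,x_1\}$ that avoids the atoms, found by the same rotation-by-$\pi/2$ counting you use.
\end{itemize}

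Both proofs rest on one fact: in a $2$-plane, only countably many rotation angles send one of the two rotated basis vectors into a countable set. Your route dispenses with Zorn's lemma. Because infinite dimensionality always leaves room for the auxiliary direction $v$, it also never meets the codimension-one obstruction that forces the paper's exchange step. The paper's argument, for its part, does not use $\dim\HH=\infty$ and works verbatim in any finite dimension $\ge 2$.

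One caveat concerns the Zorn reformulation sketched at the end of your proposal. It is not correct as stated, because a maximal orthonormal family disjoint from $B$ need not have trivial orthogonal complement. Suppose $\pm x_0\in B$ for some unit vector $x_0$, and $\mathcal{E}$ is an orthonormal basis of $x_0^\perp$ avoiding $B$. Then $\mathcal{E}$ is maximal, yet its complement is $\mathrm{span}\{x_0\}$, in which no orthogonal pair can be chosen. This one-dimensional case is exactly the one the paper handles by exchanging a vector of the family. Your direct induction does not rely on that remark, so either drop it or add the exchange step.
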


\begin{Proof}{Lemma \ref{lemma:nice-basis}}
By replacing $A_{P,Q}$ with $(A_{P,Q}-x)/c$, we may assume without loss of generality
that $c=1$ and $x=0$. Define $\mathcal{O}$ as the set of all countable orthonormal systems contained in $ A_{P,Q}^c= \HH\sm A_{P,Q}$. This set is nonempty as $A_{P,Q}^c\cap \S\neq \emptyset$ follows from the fact that $A_{P,Q}$ is countable. We define the order relation given by the inclusion $U\leq V$ if $U\subseteq V$. Then every ordered chain $\{A_\alpha\}_{\alpha} \subseteq \mathcal{O}$ can be upper bounded by $\bigcup_\alpha A_\alpha$, which is an orthonormal system with all elements in $A_{P,Q}^c$. By Zorn's lemma there exists a maximal element $U\in \mathcal{O}$. If $U$ is a basis of $\HH$, the claim follows. Assume that $U$ is not a basis of $\HH$. Then $(\overline{{\rm span}(U)})^\perp$ is closed, nonempty and
$$
 \HH= \overline{{\rm span}(U)} \oplus  (\overline{{\rm span}(U)})^\perp.
$$
If ${\rm dim}((\overline{{\rm span}(U)})^\perp)>1$, then, as $A_{P,Q}$ is at most countable, it follows that
\[
A_{P,Q}^c\cap (\overline{{\rm span}(U)})^\perp \cap \S\neq \emptyset .
\]
Hence, for any $y\in A_{P,Q}^c\cap (\overline{{\rm span}(U)})^\perp \cap \S$, the set $U\cup \{y\}$ belongs to $\mathcal{O}$ and it is strictly larger than $U$; a contradiction. If ${\rm dim}((\overline{{\rm span}(U)})^\perp)=1$, then $(\overline{{\rm span}(U)})^\perp$ is generated by some vector $x_0 \in \S$. We pick $x_1\in U\subset A_{P,Q}^c$. Then it follows that
$$
 \HH= \overline{{\rm span}(U \setminus\{x_1\})} \oplus  {\rm span}(\{x_0,x_1\}).
$$
We claim that ${\rm span}(\{x_0,x_1\}) \cap A_{P,Q}^c$ contains at least two orthonormal vectors $\{y_1,y_2\}$. Note that in this case
\[
   (U \setminus\{x_1\}) \cup \{y_1,y_2\}
\]
is an orthonormal basis of $\HH$ contained in $A_{P,Q}^c$. To show the claim we identify the space ${\rm span}(\{x_0,x_1\})$ with $\R^2$, and ${\rm span}(\{x_0,x_1\})\cap A_{P,Q}^c$ with a subset $A$ of $\R^2$ having countable complement. Let $\S^1\subset \R^2$ be the unit circle. We denote by $(a_1,a_2)$ the coordinates of a vector $a$ in an orthonormal basis $\{e_1,e_2\}$ of $\R^2$. We define the rotation $R_{\frac{\pi}{2}}$ of angle $\pi/2$ as
$$
 R_{\frac{\pi}{2}}(a_1,a_2)=(-a_2,a_1).
$$
Note that $\{R_{\frac{\pi}{2}}(a),a\}$ is an orthonormal basis for all $a\in\S^1$. Then it is enough to show that
\[
R_{\frac{\pi}{2}}\left(A\cap \S^1\right)\cap A
\]
is nonempty. This follows from the fact that $R_{\frac{\pi}{2}}\left(A\cap \S^1\right)$ is uncountable, so that it cannot be contained in the complement of $A$. Therefore, we have found an orthonormal basis of $\HH$ contained in $\HH\sm A_{P,Q}$. Since $\HH$ is separable, this basis is countable.
\end{Proof}
\vspace{2mm}

Before proceeding with the proof, we state a result from complex analysis known as Carlson's Theorem that will be needed in the remainder of the proof. We denote by $\Re(\zeta)$ and $\Im(\zeta)$ the real and imaginary components of $\zeta\in \C$. 

% With these preparations, we can state the following lemma.

\begin{Lem}\label{LemAnalytic}
Let $E=\{\zeta\in\C : \Re(\zeta)\ge 0\}$ and $f:E\to \C$ be a continuous function. Assume the following: (i) $f$ is analytic on $\Re(\zeta)>0$, (ii) for some $\alpha \in \R$ and a constant $A>0$, we have $|f(\zeta)|\le A e^{\alpha |\zeta|}$ for all $\Re(\zeta)>0$, and (iii) for some $\beta \in (-\infty, \pi)$ and a constant $B>0$, we have $|f(\zeta)|\le B e^{\beta |\zeta|}$ for all $\Re(\zeta)=0$. If $f(n)=0$ for all natural numbers $n=0,1,2,\ldots$, then $f(\zeta)=0$ for all $\zeta\in E$.
\end{Lem}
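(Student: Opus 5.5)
This is Carlson's theorem. I would derive it from the Phragmén--Lindelöf principle, which is the standard route, and not try to rebuild it from scratch.

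The first step is to kill the zeros at the integers. Set
\[
g(\zeta) \equiv \frac{f(\zeta)}{\sin(\pi \zeta)} .
\]
Since $f(n)=0$ for all $n\ge 0$ and the zeros of $\sin(\pi\zeta)$ are simple integers, $g$ extends analytically to $\Re(\zeta)>0$. On the closed half-plane it is continuous except possibly at $\zeta=0$; there I would either shift $f(\zeta)$ to $f(\zeta+1)$, which vanishes at $-1$, or handle $\zeta=0$ by a small local argument. The growth of $g$ comes from the lower bound $|\sin(\pi\zeta)|\ge c\, e^{\pi|\Im \zeta|}$ away from $\varepsilon$-discs around the integers. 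Combined with the maximum principle on those discs, this gives:
\begin{itemize}
\item $|g(\zeta)|\le C e^{\beta|\zeta|-\pi|\zeta|}$ on the imaginary axis, which decays because $\beta<\pi$;
\item $|g(\zeta)|\le C e^{(\alpha+\pi)|\zeta|}$ in the open right half-plane, using $|\sin\pi\zeta|\ge c e^{\pi|\Im\zeta|}$ only where it is useful and boundedness of $1/\sin$ near the real axis away from the integers.
\end{itemize}
So $g$ has exponential type in the half-plane and is bounded, indeed decaying like $e^{-(\pi-\beta)|y|}$, on the boundary.

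The second step is the main obstacle: showing that the interior exponential growth of $g$ is actually killed. Phragmén--Lindelöf for a half-plane (an angle of opening $\pi$, with growth of order $1<$ the critical order $1$) does not apply directly. Order $1$ is exactly critical, so one only gets $|g(x+iy)|\le C e^{\tau x}$ for some $\tau$; that is the Phragmén--Lindelöf indicator bound. I would proceed as follows.
\begin{enumerate}
\item Multiply by $e^{-\tau\zeta}$ to make $g$ bounded on the whole closed half-plane. This is legitimate because the factor is bounded by $1$ on the boundary and the bound in the interior follows from Phragmén--Lindelöf applied in the two quarter-planes, each of opening $\pi/2$, where order $1<2$ is allowed.
\item Use the boundary decay $e^{-(\pi-\beta)|y|}$ to get $f(\zeta)=O(e^{\tau x+\beta' |y|})$ with $\beta'<\pi$ along both rays.
\item Apply the classical indicator argument. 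Consider $h(\zeta)=f(\zeta)/\sin(\pi\zeta)\cdot e^{-\tau\zeta}$ and compare with $e^{\pm i\gamma\zeta}$ for $\gamma=\pi-\beta>0$. On the quarter-plane $\{x>0,y>0\}$, the function $h(\zeta)e^{-i\gamma \zeta}$ is bounded on the boundary rays and of exponential type. By Phragmén--Lindelöf it is bounded, so $|h(x+iy)|\le Ce^{-\gamma y}$; symmetrically $|h(x-iy)|\le Ce^{-\gamma y}$.
\end{enumerate}

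The final step is to conclude. Now $h$ is bounded in the half-plane, and multiplying back, $f(\zeta)e^{-\tau\zeta}$ is bounded by $Ce^{-\gamma|y|}|\sin\pi\zeta|\le C' e^{(\pi-\gamma)|y|}$ on the imaginary axis. The cleanest finish is via the Poisson/Nevanlinna representation of the bounded analytic function $h$ on the half-plane. Its boundary values decay like $e^{-\gamma|y|}$, and $\log|h|$ satisfies
\[
\log|h(x)| \le -\pi x + O(1)
\]
along the positive real axis through the integers. That is impossible for a nonzero bounded function unless the Poisson integral $\int \log|h(iy)|(1+y^2)^{-1}dy$ is $-\infty$. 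To justify this I would cite the standard fact, from Titchmarsh's \emph{Theory of Functions} §5.81, that a bounded analytic function in a half-plane satisfying
\[
\limsup_{x\to\infty} x^{-1}\log|h(x)| < -\sup(\text{boundary type})
\]
vanishes identically. Hence $h\equiv0$, so $f\equiv 0$ on $\Re\zeta>0$, and by continuity $f\equiv 0$ on all of $E$. In practice I would simply cite Carlson's theorem from Titchmarsh or Boas, \emph{Entire Functions}, Thm.~9.2.1, since the statement is classical, with the above as the proof sketch.
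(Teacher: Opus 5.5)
Your fallback of simply citing Carlson's theorem is what the paper does: it gives no proof of Lemma~\ref{LemAnalytic}, only references to Titchmarsh, Carlson and Hardy. Your self-contained sketch, however, fails at its last step.

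Nothing gives $\log|h(x)|\le -\pi x+O(1)$ on the positive real axis. Dividing by $\sin(\pi\zeta)$ has removed the zeros at the integers, and $h(n)=\pm f'(n)e^{-\tau n}/\pi$ carries no decay information. The ``standard fact'' you invoke is also false. Take $h(\zeta)=e^{-\pi\zeta}$: it is bounded on $\Re(\zeta)\ge 0$, has $|h(iy)|=1$ (boundary type $0$) and $\log|h(x)|=-\pi x$, yet $h\not\equiv 0$. The same example shows that decay along the real axis is compatible with a nonzero bounded function. The Poisson-integral criterion constrains the \emph{boundary} values instead: a bounded analytic $h\not\equiv 0$ on the half-plane must satisfy $\int_{\R}\log|h(iy)|\,(1+y^2)^{-1}\,dy>-\infty$.

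You missed that your step~3 already finishes the proof. There $h$ is bounded with $|h(x+iy)|\le Ce^{-\gamma|y|}$, where $\gamma=\pi-\beta>0$, so the integral above is $-\infty$ and $h\equiv 0$. Elementarily, this goes as follows.
\begin{itemize}
\item Given $\omega>0$, choose $\theta\in(0,\pi/2)$ with $\omega\cos\theta\le\gamma\sin\theta$. Then $|h(\zeta)e^{\omega\zeta}|\le C$ on the rays $\arg\zeta=\pm\theta$.
\item Phragm\'en--Lindel\"of in the sector of opening $2\theta<\pi$ (order $1<\pi/(2\theta)$) gives $|h(x)|\le Ce^{-\omega x}$ for all $x>0$, with $C$ independent of $\omega$.
\item Letting $\omega\to\infty$ gives $h=0$ on $(0,\infty)$, hence $f\equiv 0$.
\end{itemize}

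One more point needs care. The lemma only assumes $f$ is continuous on $\Re(\zeta)=0$, so $f/\sin(\pi\zeta)$ may be unbounded near $0$. Your shift to $f(\zeta+1)$ is the right remedy, but it needs hypothesis (iii) on the line $\Re(\zeta)=1$. You must first derive this from the quadrant bound $|f(x+iy)|\le Ce^{\alpha' x+\beta|y|}$, with $\alpha'=\max(\alpha,0)$. That bound comes from applying Phragm\'en--Lindel\"of to $f(\zeta)e^{-\alpha'\zeta\pm i\beta\zeta}$ in the upper and lower quarter-planes. This step is also what justifies the paper's replacement of regularity on the closed half-plane by mere continuity up to the boundary.
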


Lemma \ref{LemAnalytic} does not correspond to Carlson's actual result (see Theorem~C of §9, p.~58, in \cite*{CarlesonThesis}), but it is a fairly straightforward consequence of it. The version provided in Lemma \ref{LemAnalytic} is based on §5.8 and §5.81, pp.~185--186, in \cite*{Tit1939}; see also Theorem 1 in \cite*{Hardy1920}. It is usually stated in terms of holomorphic maps that are also regular which, for simplicity, we enforced in Lemma~\ref{LemAnalytic} by imposing additional continuity up to the boundary of $E$ (see, e.g., p.~143 in \cite*{Tit1939}), which will be enough for our purposes. \smallskip 

We now proceed with the rest of the proof of Theorem \ref{TheorCdf}, assuming that (\ref{eq:APQ}) holds and that $\HH$ is infinite-dimensional with orthonormal basis $(e_k)$ as in Lemma~\ref{lemma:nice-basis}. We divide the proof in several steps.\medskip

\newpage 

{\it Step 1. Coordinate perturbation at infinity.}\smallskip

\noindent 
Fix $x,z\in \HH$ and $c>0$. Let $\{e_k\}_{k\in \N}$ be the orthonormal basis provided in \Cref{lemma:nice-basis}. Then we have 
$$
\|z-x-ce_k\|^2 
=
\|z-x\|^2 - 2c \ps{z-x}{e_k} + c^2 
.
$$
Since Parseval's identity yields $\|z-x\|^2 = \sum_{k\geq 0} \lvert \ps{z-x}{e_k}\rvert^2$, we have $\ps{z-x}{e_k}\to 0$ as $k\to\infty$. We deduce that
$$
\|z-x-c e_k\| 
\to 
\sqrt{\|z-x\|^2 + c^2}\
>
0, \quad \text{as $k\to \infty$.}
$$
Consequently, since $x+ce_k\in \HH\setminus A_{P,Q}$,  the dominated convergence theorem entails that
$$
\ps{\Fpg(x+ce_k)}{e_k}
=
\int \frac{\ps{x+ce_k-z}{e_k}}{\|x+ce_k-z\|}\, dP(z) 
\to 
\int \frac{c}{\sqrt{\|x-z\|^2+c^2}}\, dP(z),
$$
as $k\to \infty$.  Similarly, we have
$$
\big\langle F_Q^\g(x+ce_k), e_k\big\rangle
\to 
\int \frac{c}{\sqrt{\|x-z\|^2+c^2}}\, dQ(z) 
,
$$
as $k\to \infty$. 
Since $\Fpg$ and $F_Q^\g$ coincide over $\HH\sm A_{P,Q}$ and $\{x+ce_k : k\ge 0\}\subset \HH\sm A_{P,Q}$, we have $\Fpg(x+ce_k)=F_Q^\g(x+ce_k)$ for all $k$. We deduce that 
\begin{equation}\label{eq:LimitF2}
\int \frac{1}{\sqrt{\|x-z\|^2+c}}\, dP(z) 
=
\int \frac{1}{\sqrt{\|x-z\|^2+c}}\, dQ(z) 
% =
% \int_{\HH} \Big( \sqrt{ \|z-x\|^2 + c } - \|z\| \Big)\, dQ(z) 
,\spa x\in \HH,\ \forall\ c> 0
.
\end{equation}
% In addition, the triangle inequality entails that 
% $$
% \big| \|z-x-c e_k\| - \|z\| \big| 
% \le 
% \|x-ce_k\| 
% \le 
% \|x\| + |c|
% ,
% $$
% for all $k\ge 0$ and $z\in \HH$. Consequently, the dominated convergence theorem yields 
% $$
% h_P(x+c e_k)
% =
% \int_{\HH} \big( \|z-x-c e_k\| - \|z\| \big)\, dP(z) 
% \to 
% \int_{\HH} \Big( \sqrt{ \|z-x\|^2 + c^2 } - \|z\| \Big)\, dP(z) 
% ,\spa 
% k\to \infty 
% .
% $$
% Since the same result holds for $Q$, and $h_P(x+c e_k)=h_Q(x+ce_k)$ for all $x\in \HH$, we deduce that (\ref{eq:LimitF}) holds for all $x\in \HH$ and $c\in\R$.
\medskip 

{\it Step 2. Extension to the complex plane.} \smallskip 

\noindent
For all $x,z\in \HH$ fixed, we have 
$$ \frac{d}{dc} \frac{1}{(\|z-x\|^2+c)^{\frac{1}{2}}}=-\frac{1}{2(\|z-x\|^2 + c)^{\frac{3}{2}}}
.
$$ 
For any $c>0$, the latter is bounded, uniformly in $c\in [c_0,\infty)$, by $2^{-1}c^{-3/2}$. Repeating the argument, we may take derivatives under the integral in (\ref{eq:LimitF2}) to obtain, for any integer $n\ge 0$,
\begin{equation}\label{eq:negative-powers-eq}
\int \frac{1}{(\|z-x\|^2 + c)^{\frac12 + n}}\, d(P-Q)(z) 
=
0
% =
% \int_{\HH} \frac{1}{(\|z-x\|^2 + c)^{\frac12 + n}}\, dQ(z) 
,\spa 
\forall\ x\in \HH,\ c > 0
.
\end{equation}
Now fix $c>0$ and $x\in \HH$. We denote by $P_{c,x}$ and $Q_{c,x} $ the laws in $\R$ of $\log(\|Z-x\|^2+c)$ when $Z\sim P$ and $Z\sim Q$, respectively. Then  \eqref{eq:negative-powers-eq} yields
\begin{equation}
\int e^{-sn} e^{-s/2}\, d(P_{c,x}-Q_{c,x})(s)
=
0
% =
% \int_\R e^{-sn} e^{-s/2}\, dQ_{c,x}(s)
.
\end{equation}
Since $P_{c,x}$ and $Q_{c,x}$ are supported on $[\log c,\infty)$, we may define
$$
\Phi(\zeta) 
\equiv
\int e^{-s\zeta} e^{-s/2}\, d(P_{c,x}-Q_{c,x})(s)
,\spa 
\forall\ \zeta\in \C,\ \Re(\zeta)\ge -\frac12
.
$$
It is straightforward to see that $\Phi(\zeta)$ is holomorphic on
$
\{\zeta \in \C : \Re(\zeta) > -1/2 \}
.
$
Consequently, $\Phi$ is analytic over this complex half-plane.
% In particular, $\Phi$ is analytic over $\Re(\zeta)\ge 0$.
\medskip 

{\it Step 3. Conclusion through Fourier transform.}\smallskip 

\noindent
Observe that $\Phi$ is continuous over $\Re(\zeta)\ge 0$ and analytic over $\Re(\zeta)>0$. Recalling that $P_{c,x}$ and $Q_{c,x}$ are supported on $[\log c,\infty)$, we have
\begin{align*}
|\Phi(\zeta)|
\le 
\int |e^{-s\Re(\zeta)}|\ e^{-s/2} d(P_{c,x}+Q_{c,x})(s)
\le 
2 e^{-(\log c) \Re(\zeta)} e^{-(\log c)/2}
\le 
\frac{2}{ \sqrt{c} } e^{|\log c|\ |\zeta|}
,
\end{align*}
for all $\zeta\in\C$ with $\Re(\zeta)>0$, as well as 
$$
|\Phi(\zeta)|
\le 
\int e^{-s/2} d(P_{c,x}+Q_{c,x})(s)
=
2 e^{-(\log c)/2} 
\le 
\frac{2}{ \sqrt{c} }
,
$$
for all $\zeta\in\C$ with $\Re(\zeta)=0$. Since $\Phi(n)=0$ for all natural numbers $n\ge 0$, we deduce from Lemma \ref{LemAnalytic} that $\Phi(\zeta)=0$ for all $\Re(\zeta)\ge 0$. Taking $\zeta = it$, $t\in\R$, thus leads to 
$$
0
=
\Phi(it) 
=
\int e^{-its} e^{-s/2} d(P_{c,x}-Q_{c,x})(s) 
,\spa 
\forall\ t\in\R 
.
$$
This means that the Fourier transform (or characteristic function) of the measures $e^{-s/2}dP_{c,x}(s)$ and $e^{-s/2} dQ_{c,x}(s)$ coincide. This implies that $P_{c,x}$ and $Q_{c,x}$ coincide over Borel subsets of $\R$. Indeed, letting $p$ and $q$ stand for the density of $P_{c,x}$ and $Q_{c,x}$, respectively, with respect to the finite measure $\mu = P_{c,x}+Q_{c,x}$, entails that $e^{-s/2} p(s) = e^{-s/2} q(s)$ for $\mu$-almost every $s\in\R$. Since $e^{-s/2}>0$ for all $s\in\R$, we deduce that $p(s)=q(s)$ for $\mu$-almost every $s\in\R$, so that $P_{c,x}=Q_{c,x}$ over Borel subsets of $\R$. Consequently, we have, for all $x\in \HH$, $c>0$, and $r\ge \log c$,
\begin{align*}
P\big[\{z\in \HH : \|z-x\| \le \sqrt{e^r - c}\} \big]&= P_{c,x}\big[(-\infty,r]\big]\\
&=Q_{c,x}\big[{(-\infty,r]}\big]
\\
&=
Q\big[\{z\in \HH : {\|z-x\| \le \sqrt{e^r - c}}\}\big].
\end{align*}
Setting $r(u)=\log(u^2+c)$ for $u\ge 0$, we have
\begin{equation}\label{eq:PQBalls}
P\big[\{z\in \HH : \|z-x\| \le u\} \big]
=
Q\big[\{z\in \HH : \|z-x\| \le u\}\big]
,\spa 
\forall\ x\in \HH,\ u\ge 0
.
\end{equation}
We deduce that $P$ and $Q$ coincide over all balls of $\HH$ centered at arbitrary points in $\HH$. Since $\HH$ is separable, then it is a Lindel\"of space, so that, in particular, every open set can be written as a countable union of balls of finite radius. This implies that the $\sigma$-algebra generated by such balls coincides with the Borel $\sigma$-algebra of $\HH$. It follows that $P$ and $Q$ coincide over Borel subsets of $\HH$. This concludes the proof. 
% \end{Proof}

\section{Proof of Theorem~\ref{TheorDepth}}
\label{Section:characterization}

When $\|\Fpg(x)\|=\|F_Q^\g(x)\|$ for all $x$ in a dense subset of $\HH$, then arguing as in the beginning of the proof of Theorem \ref{TheorCdf} entails that $\|\Fpg(x)\|=\|F_Q^\g(x)\|$ for all $x\in \HH\sm A_{P,Q}$, where $A_{P,Q}=\{y\in\HH : P[\{y\}]+Q[\{y\}]>0\}$. In this section, we first prove Theorem~\ref{TheorDepth}(ii)-(iii) under the previous assumption before finally proving Theorem~\ref{TheorDepth}(i) in Proposition~\ref{PropPEqualQDim1} below.\medskip

Fix $x_0\in\HH$ and define 
$$
\Phi(x)\equiv h_P(x)+h_Q(x)
,\spa 
\forall\ x\in\HH 
.
$$
Note that $\Phi$ is convex and continuous on $\HH$. Since $h_P$ and $h_Q$ are continuous on $\HH$ and coercive, in the sense that $h_P(x), h_Q(x) \to + \infty$ as $\|x\|\to \infty$, then Proposition~11.15 in \cite*{Bauschke-Combettes.Springer.2017} entails that $h_P$ and $h_Q$ admit a global minimum. In particular, we have
$$
\inf_{x\in\HH} \Phi(x) \geq \inf_{x\in\HH} h_P(x)+ \inf_{x\in\HH} h_Q(x) >-\infty
.
$$ 
Consequently, \Cref{PropGradFlow} entails that there exists a unique map $u:[0,\infty)\to \R^d$ satisfying 
\begin{equation}\label{eq:GradFlow}
\begin{cases}
-\dot u(t) \in (\partial \Phi)(u(t))\quad  \text{for almost all}~ t>0,\\[2mm]
u(0) = x_0.
\end{cases}
\end{equation}
Let us now give the idea of the proof in the specific case when $\Phi$, $h_P$ and $h_Q$ are differentiable and $\Phi$ is strictly convex. Then, $u(t)$ is the solution to the gradient flow equation $\dot u(t) = - \nabla \Phi(u(t))$ with $u(0)=x_0$, and taking derivatives in $t$ yields 
% The heuristics is the following
\begin{align*}
\frac{d}{dt} \big( h_P(u(t)) - h_Q(u(t)) \big) 
&=
\big\langle\Fpg(u(t)) -F_Q^\g(u(t)) , \dot u(t) \big\rangle 
\\[2mm]
&=
-\big\langle\Fpg(u(t)) -F_Q^\g(u(t)) , \Fpg(u(t)) + F_Q^\g(u(t)) \big\rangle 
\\[2mm]
&=
\|F_Q^\g(u(t))\|^2 -\|\Fpg(u(t))\|^2
.
\end{align*}
Therefore, when $\|\Fpg\|=\|F_Q^\g\|$ over $\HH$, it follows that $t\mapsto h_P(u(t))-h_Q(u(t))$ is constant over $(0,\infty)$; this constant further depends on the starting point $x_0$. However, when $\Phi$ is strictly convex then the gradient flow $u(t)$ converges to the unique minimizer $x_\Phi$ of $\Phi$ as $t\to\infty$, which is now independent of $x_0$. Thus, taking limits yields 
$$
h_P(x_0)-h_Q(x_0)
=
\lim_{t\to\infty} 
\big(h_P(u(t))-h_Q(u(t)\big)
=
h_P(x_\Phi)-h_Q(x_\Phi)
,
$$
from which we deduce that $h_P-h_Q$ is in fact constant over all of $\HH$. In addition, this constant must be zero since $h_P(0)=h_Q(0)=0$, from which we deduce that $h_P=h_Q$ hence $\Fpg = F_Q^\g$ by taking the differential, which yields $P=Q$. \smallskip  

\begin{Remark}[Characterization through transport-based depth]\label{remark:OT-charact}
The measure transportation distribution function $\mathbb{F}_P$ of $P$ is defined as the unique gradient of a convex function pushing $P$ forward to a fixed reference probability measure $\mu$ which is assumed to have bounded support; see, e.g., \cite*{HallEspagne21} and \cite*{gonzalezsanz.etl.a.2025monotone}. The transport-based distribution function $\mathbb{F}_P$ is only defined $P$-a.e., but it can be extended as the element of minimal norm of a maximal monotone extension of $\mathbb{F}_P$, which we still denote by $\mathbb{F}_P$. Such a choice satisfies the assumptions in  \cite*{PerezSalasSavid-Math-Prog.2021} so that that the transport-based depth function
% $$ x\mapsto {\rm D}^{OT}(x;P) \equiv  1-\|\mathbb{F}_P(x)\|$$
characterizes $P$. 
\end{Remark}

The proof strategy above only applies under some assumptions on $h_P$ and $h_Q$, which are granted when $P$ and $Q$ are non-atomic and not supported on a same line. The proof we present below is more involved and encompasses \textit{all} probability measures $P$ and $Q$.

\begin{Lem}\label{LemAbsCont}
Let $P$ and $Q$ be Borel probability measures on $\HH$, and $u(t)$ be the gradient flow from (\ref{eq:GradFlow}). The maps $t\mapsto h_P(u(t))$ and $h_Q(u(t))$ are absolutely continuous on every interval of $[0,\infty)$, and for almost all $t>0$ we have 
$$
\frac{d}{dt}\big(h_P(u(t))\big)
=
\langle \Fpg(u(t)),\dot{u}(t) \rangle
,\quad
\text{and} 
\quad 
\frac{d}{dt}\big(h_Q(u(t))\big)
=
\langle F_Q^\g(u(t)),\dot{u}(t) \rangle
.
$$
\end{Lem}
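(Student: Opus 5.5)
We will use three facts: $h_P$ is convex and globally Lipschitz, the gradient flow $u$ is locally absolutely continuous, and $\Fpg(x)$ lies in the subdifferential $(\partial h_P)(x)$ at every $x$. The argument for $Q$ is identical, so we only treat $P$.

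\textbf{Step 1 (Lipschitz bound).} For all $x,y\in\HH$, the triangle inequality gives $|\|x-z\|-\|y-z\||\le\|x-y\|$ for every $z$. Integrating against $P$ yields $|h_P(x)-h_P(y)|\le\|x-y\|$, so $h_P$ is $1$-Lipschitz.

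\textbf{Step 2 (regularity of the flow).} By \Cref{PropGradFlow}, the solution $u$ of (\ref{eq:GradFlow}) is absolutely continuous on every compact interval $[a,b]\subset[0,\infty)$. In a Hilbert space this implies that $u$ is differentiable almost everywhere and $u(t)-u(s)=\int_s^t\dot u$ in the Bochner sense. Composing a $1$-Lipschitz map with an absolutely continuous curve gives an absolutely continuous map. Indeed, $\sum|h_P(u(t_i))-h_P(u(s_i))|\le\sum\|u(t_i)-u(s_i)\|$, and the right-hand side is small whenever $\sum|t_i-s_i|$ is small. Hence $t\mapsto h_P(u(t))$ is absolutely continuous on $[a,b]$ and differentiable almost everywhere.

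\textbf{Step 3 (subgradient).} We check that $\Fpg(x)\in(\partial h_P)(x)$ for every $x$, including atoms. For each $z$, the vector $g_z(x)=\frac{x-z}{\|x-z\|}\I[z\ne x]$ is a subgradient of $y\mapsto\|y-z\|$ at $x$. At $z=x$ the value is $0$, which is admissible since $0$ lies in the unit ball. So $\|y-z\|-\|x-z\|\ge\langle g_z(x),y-x\rangle$ for all $y$. Integrating in $z$ is legitimate because both sides are bounded by $\|y-x\|$. This gives $h_P(y)-h_P(x)\ge\langle\Fpg(x),y-x\rangle$.

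\textbf{Step 4 (chain rule).} Fix $t>0$ at which both $u$ and $h_P\circ u$ are differentiable; almost every $t$ qualifies.
\begin{itemize}
\item Apply the subgradient inequality with $x=u(t)$ and $y=u(t+\epsilon)$, divide by $\epsilon>0$, and let $\epsilon\downarrow0$. This gives $\frac{d}{dt}h_P(u(t))\ge\langle\Fpg(u(t)),\dot u(t)\rangle$.
\item Taking $\epsilon<0$ and dividing flips the inequality, giving $\le$.
\end{itemize}
Both one-sided limits exist and equal the derivative, so equality holds.

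The main obstacle is that $\Fpg$ need not be the gradient of $h_P$: at atoms of $P$ it is only one element of the subdifferential, so a classical chain rule does not apply. The two-sided limit argument in Step 4 is what removes this difficulty. It needs only differentiability of $h_P\circ u$ and of $u$ at $t$, and never differentiability of $h_P$ at $u(t)$.
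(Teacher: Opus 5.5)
Your proof is correct, and it replaces both of the paper's ingredients with more elementary ones.

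\textbf{Subgradient membership.} The paper obtains $\Fpg(x)\in(\partial h_P)(x)$ by computing the one-sided directional derivatives $\frac{\partial h_P}{\partial v}(x)=\|v\|P[\{x\}]+\langle\Fpg(x),v\rangle$ (quoted from Konen and Stupfler). This yields the full description \eqref{eq:SubGradHp} of $(\partial h_P)(x)$ as the closed ball of radius $P[\{x\}]$ about $\Fpg(x)$. You instead integrate the pointwise subgradient inequality of $y\mapsto\|y-z\|$. That argument is self-contained but gives only membership.

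\textbf{Chain rule.} The paper invokes the abstract result \Cref{PropGradFlow}(v), imported from the gradient-flow literature, with $f(t)=\Fpg(u(t))$. This tacitly requires $t\mapsto\Fpg(u(t))$ to lie in $L^2([0,T],\HH)$, hence to be measurable. That holds, since $\Fpg$ is continuous off the countable set of atoms, but the paper does not check it. You use that $h_P$ is finite and $1$-Lipschitz, so absolute continuity of $h_P\circ u$ is immediate. You then get the identity from the two-sided difference-quotient squeeze at every $t$ where $u$ is strongly differentiable with derivative $\dot u(t)$; this holds almost everywhere by Lebesgue differentiation for Bochner integrals, as you note. Your argument needs only membership $\Fpg(u(t))\in(\partial h_P)(u(t))$ at each fixed $t$, with no measurability or integrability of the selection.

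\textbf{What each route buys.} The paper's route produces \eqref{eq:SubGradHp} itself, which is reused later. It gives $\partial\Phi(u(t))=\{\Fpg(u(t))+F_Q^\g(u(t))\}$ off atoms in Lemma~\ref{lemma:Constant}, and it underlies the median characterization \eqref{eq:CharacCdfUni}. On your route, that description would have to be established separately.
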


\begin{Proof}{Lemma \ref{LemAbsCont}}
Let us first argue that $\Fpg(x)\in (\partial h_P)(x)$ for all $x\in\HH$. The subgradient $(\partial h_P)(x)$ is the collection of points $y\in\HH$ such that 
$$
h_P(x) + \ps{y}{z-x} \le h_P(z)
,\spa 
\forall\ z\in\HH
.
$$
Parametrizing $z=z(t,v)$ as $z=x+tv$ for $t\ge 0$ and $v\in\S$, and using the convexity of~$h_P$ to the effect that the map $t\mapsto (h_P(x+tv)-h_P(x))/t$ is non-decreasing on~$(0,\infty)$, we see that 
$$
(\partial h_P)(x)
=
\Big\{ y\in\HH : \ps{y}{v} \le \frac{\partial h_P}{\partial v}(x)~~ \forall\ v\in\S 
\Big\} 
,
$$
where $\partial h_P/\partial v$ is given (see Proposition~3.4 in \cite*{KonStu2025} with $\mathfrak{r}\equiv 1$ and $\alpha=0$) by
% , by virtue of (\ref{eq:JacR}) for $x\in\R^d\sm\{0\}$,
$$
\frac{\partial h_P}{\partial v}(x)
\equiv
\lim_{s\downarrow 0} \frac{h_P(x+sv)-h_P(x)}{s}
=
\|v\|P[\{x\}] + \ps{\Fpg(x)}{v}
.
% =
% \begin{cases}
%     \ps{\Fpg(x)}{v} & \text{if } x\ne 0, \\
%     h_P(0) & \text{if } x=0.
% \end{cases}
% =
% \begin{cases}
%     h_P(\|x\|) \frac{\ps{x}{v}}{\|x\|} & \text{ if } x\ne 0, \\
%     h_P(0) & \text{ if } x=0.
% \end{cases}
$$
We deduce that
\begin{equation}\label{eq:SubGradHp}
(\partial h_P)(x)
=
\Big\{ 
y\in\HH : \big\|y - \Fpg(x)\big\| \le P[\{x\}] 
\Big\} 
,
% =
% \begin{cases}
% \{ (\nabla h_P)(x)\} & \text{if } x\ne 0,\\
% B(0, h_P(0))\cup\{0\} & \text{if } x=0.
% \end{cases}
\end{equation}
which yields $\Fpg(x)\in (\partial h_P)(x)$ for all $x\in\HH$. Similarly, we have $F_Q^\g(x)\in (\partial h_Q)(x)$ for all $x\in\HH$. Consequently, \Cref{PropGradFlow}(v) with $\psi=h_P$ and $f(t)=\Fpg(u(t))$, and $\psi=h_Q$ and $f(t)=F_Q^\g(u(t))$, respectively, implies that $h_P(u(t))$ and $h_Q(u(t))$ are absolutely continuous on every interval of $[0, \infty)$ with almost everywhere defined weak derivatives as in the statement.
\end{Proof}

The following result shows that $h_P - h_Q$ is constant along the flow $u(t)$. Next we will show that this constant does not depend on the starting point $x_0$. 
% \medskip 

\begin{Lem}\label{lemma:Constant} 
Let $P$ and $Q$ be Borel probability measures on $\HH$, and $u(t)$ be the gradient flow from (\ref{eq:GradFlow}). Define 
$
T_\Phi
\equiv
\inf\big\{ 
t>0 : u(t)\in \argmin\Phi\big\} 
,
$ with the convention $T_\Phi = \infty$ if the set is empty.
Then, the map $t\mapsto h_P(u(t)) - h_Q(u(t))$ is constant over $[0, T_\Phi)$. 
\end{Lem}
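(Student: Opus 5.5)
The plan is to combine Lemma~\ref{LemAbsCont} with a pointwise computation of the derivative of $g(t)\equiv h_P(u(t))-h_Q(u(t))$, splitting the times $t$ according to whether $u(t)$ is an atom of $P$ or $Q$. Throughout I use the standing assumption of this section: $\|\Fpg(x)\|=\|F_Q^\g(x)\|$ for all $x\in\HH\sm A_{P,Q}$. By Lemma~\ref{LemAbsCont}, $g$ is absolutely continuous on every compact subinterval of $[0,\infty)$, and for almost every $t>0$,
\[
g'(t)=\big\langle \Fpg(u(t))-F_Q^\g(u(t)),\dot u(t)\big\rangle .
\]
It therefore suffices to show that $g'(t)=0$ for almost every $t\in(0,T_\Phi)$. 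An absolutely continuous function with almost everywhere vanishing derivative is constant on each compact subinterval, hence on $[0,T_\Phi)$. (Restricting to $[0,T_\Phi)$ is harmless; the argument below does not really use it.)

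\emph{Times with $u(t)\notin A_{P,Q}$.} By (\ref{eq:SubGradHp}), $(\partial h_P)(u(t))=\{\Fpg(u(t))\}$ and $(\partial h_Q)(u(t))=\{F_Q^\g(u(t))\}$. Both $h_P$ and $h_Q$ are continuous and convex on all of $\HH$, so the subdifferential sum rule applies and gives $(\partial\Phi)(u(t))=\{\Fpg(u(t))+F_Q^\g(u(t))\}$. Hence, for almost every such $t$, (\ref{eq:GradFlow}) forces $\dot u(t)=-(\Fpg+F_Q^\g)(u(t))$. Substituting into the derivative formula gives
\[
g'(t)=\|F_Q^\g(u(t))\|^2-\|\Fpg(u(t))\|^2 ,
\]
which is $0$ by the standing assumption, since $u(t)\notin A_{P,Q}$.

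\emph{Times with $u(t)\in A_{P,Q}$.} Here the subgradients are genuine balls, and neither the identity for $\dot u$ nor the norm equality is available. I would avoid both by showing that $\dot u(t)=0$ for almost every such $t$, which makes $g'(t)=0$ regardless of the values of $\Fpg$ and $F_Q^\g$. Since $A_{P,Q}$ is countable, it is enough to treat one atom $a$ and the level set $L_a=\{t: u(t)=a\}$; a countable union of null sets is null.
\begin{itemize}
\item By \Cref{PropGradFlow}, $u$ is absolutely continuous on compact intervals and hence differentiable almost everywhere.
\item The isolated points of $L_a$ form a countable set, so they can be discarded.
\item If $t\in L_a$ is an accumulation point of $L_a$ at which $u$ is differentiable, pick $t_k\to t$ with $t_k\in L_a$. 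Then $\dot u(t)=\lim_k (u(t_k)-u(t))/(t_k-t)=0$.
\end{itemize}
Hence $\dot u=0$ almost everywhere on $L_a$, and therefore $g'=0$ almost everywhere on $\{t: u(t)\in A_{P,Q}\}$.

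I expect this second case to be the main obstacle, since at atoms the flow need not follow $-(\Fpg+F_Q^\g)$ and the depth equality is not assumed there. The level-set argument sidesteps it, but some care is needed. The Hilbert-space-valued flow must be differentiable almost everywhere, which I would take from \Cref{PropGradFlow}, or from the Radon--Nikodym property of Hilbert spaces for absolutely continuous maps. The full-measure sets on which Lemma~\ref{LemAbsCont}, (\ref{eq:GradFlow}) and differentiability of $u$ each hold must also be intersected before the two cases are combined.
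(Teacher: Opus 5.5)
Your proof is correct, but it handles the atom times by a different route from the paper. The paper first disposes of the case $u(0)\in\argmin\Phi$. It then uses the gradient-flow structure to show that the set $S$ of times in $(0,T_\Phi)$ at which $u(t)$ is an atom is at most countable. If some atom $z$ had an uncountable level set $S_z$, monotonicity of $\Phi(u(t))$ and the identity $\frac{d}{dt}\Phi(u(t))=-\|\dot u(t)\|^2$ from \Cref{PropGradFlow}(iv) would force $u\equiv z$ on $[\inf S_z,\sup S_z]$. That gives $0\in\partial\Phi(z)$, so $z\in\argmin\Phi$, contradicting the definition of $T_\Phi$. The non-atom computation then holds almost everywhere on $(0,T_\Phi)$.

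You instead use a purely measure-theoretic fact: an almost-everywhere differentiable curve has zero derivative almost everywhere on each of its level sets. Hence $g'=\langle \Fpg-F_Q^\g,\dot u\rangle=0$ at almost all atom times, and you need no information about how the flow behaves there. Your route is more elementary and avoids both the case distinction and $T_\Phi$. It gives constancy of $h_P(u(t))-h_Q(u(t))$ on all of $[0,\infty)$, which would also slightly simplify the limiting step in Lemma~\ref{LemNotInLine}.

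The paper's route yields a stronger statement that the lemma does not need: before reaching $\argmin\Phi$, the flow meets atoms only at countably many times. That countability genuinely fails after $T_\Phi$ when a minimizer is an atom, since the flow then sits at it forever. This is why the paper must restrict to $[0,T_\Phi)$.

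Two smaller points in your favour. Your appeal to the sum rule for continuous convex functions justifies $(\partial\Phi)(u(t))=\{\Fpg(u(t))+F_Q^\g(u(t))\}$, a step the paper leaves implicit. The almost-everywhere differentiability of $u$ that you need follows from the appendix's integral definition of absolute continuity, via Lebesgue's differentiation theorem for Bochner integrals.
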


\begin{Proof}{Lemma \ref{lemma:Constant}}
Assume first that $u(0)\in \argmin \Phi$. \Cref{PropGradFlow}(iv) entails that $t\mapsto \Phi(u(t))$ is non-increasing so that, since $u(0)\in \argmin \Phi$, then $\Phi(u(t))$ is in fact constant over $(0,\infty)$. Consequently, \Cref{PropGradFlow}(iv) yields $\dot u(t)=0$ for almost all $t>0$. Lemma \ref{LemAbsCont} thus provides 
$$
\frac{d}{dt}\big(h_P(u(t)) - h_Q(u(t))\big) 
=
\big\langle \Fpg(u(t))-F_Q^\g(u(t)), \dot u(t)\big\rangle
=
0
,
$$
so that $t\mapsto h_P(u(t)) - h_Q(u(t))$ is constant.\smallskip

Then assume that $x_0\in\HH\setminus\argmin\Phi$. We first show that the set of
times at which the flow visits an atom of $P+Q$ before reaching $\argmin\Phi$ is
at most countable. Notice that $T_\Phi>0$ since $t\mapsto u(t)$ is continuous, and that the value $T_\Phi=\infty$ is allowed. Set
\begin{align*}
    S&\equiv
\Big\{ 
t\in (0,T_\Phi) : P[\{u(t)\}]+Q[\{u(t)\}]>0
\Big\}  .
\end{align*}
% Then, it follows from (\ref{eq:SubGradHp}) that 
% \begin{equation}
%     \label{eq:Complement-of-S}
% (0,T_\Phi)\setminus S\subset \Big\{ 
% 		t\in (0,T_\Phi) :  \text{$\partial h_P(u(t))=\{ \Fpg(u(t))\} =\partial h_Q(u(t))=\{ \mathbb{F}_P^g(u(t))\} $}
% 		\Big\}.
% \end{equation}
Let us now argue that $S$ is at most countable. Assume, \emph{ad absurdum}, that $S$ is uncountable. Since $P+Q$ is a finite measure, it has at most countably many atoms. In particular, there exists $z\in\HH$ such that $P[\{z\}]+Q[\{z\}]>0$ and such that the set $S_z\equiv\{t\in (0,T_\Phi) : u(t)=z\}$ is uncountable. Let $\tau_-\equiv\inf S_z$ and $\tau_+\equiv\sup S_z$. Since $S_z$ contains more than two elements, we have $\tau_-<\tau_+\le T_\Phi$. Because $t\mapsto \Phi(u(t))$ is non-increasing by virtue of \Cref{PropGradFlow}(iv), we have 
$$
\Phi(z)
=
\Phi(u(\tau_+))
\leq 
\Phi(u(t))
\leq 
\Phi(u(\tau_-))=\Phi(z)
,\spa 
\forall\ t\in [\tau_-,\tau_+]
.
$$
It follows that $\Phi(u(t))$ is constant on $[\tau_-,\tau_+]$ which, by Proposition~\ref{PropGradFlow}(iv), implies that  $\dot{u}(t)=0$ for almost all $t\in [\tau_-,\tau_+]$ and, thus, $u(t)=z$ for all $t\in [\tau_-,\tau_+]$. Recalling that $-\dot{u}(t)\in  \partial \Phi (u(t))$ for almost all $t>0$, we deduce that $0\in  \partial \Phi (z)$ which, by convexity of $\Phi$, yields $z\in \argmin \Phi$. The fact that $u(\tau_-)=z$ with $z\in\argmin \Phi$ thus contradicts the fact that $\tau_-<T_\Phi$. We deduce that $S$ is at most countable. \smallskip

For all $t\in (0, T_\Phi)\sm S$, hence for almost all $t\in (0,T_\Phi)$, we then have
$$
 -\dot{u}(t)\in \partial \Phi (u(t))
=
\big\{\Fpg(u(t)) + F_Q^\g(u(t)) \big\}
.
$$
Recalling that $\|\Fpg(x)\|=\|F_Q^\g(x)\|$ for all $x\in \HH$ such that $P[\{x\}]+Q[\{x\}]=0$, we find that, for all $t\in (0,T_\Phi)\sm S$
\begin{align*}
\frac{d}{dt} \big( h_P(u(t)) - h_Q(u(t)) \big) 
&=
-\big\langle\Fpg(u(t)) -F_Q^\g(u(t)) , 
\Fpg(u(t))+F_Q^\g(u(t)) \big\rangle
\\[2mm]
&=
-\|\Fpg(u(t))\|^2+\|F_Q^\g(u(t))\|^2
=
0
,
\end{align*} 
which concludes the proof. 
\end{Proof}

We introduce the median sets of $P$ and $Q$ as the collection of their spatial (or Fréchet) medians: 
$$
{\rm med}(P)
\equiv 
\Big\{
x_0\in \HH : x_0~ \text{is a global minimizer of}~ h_P 
\Big\}
,
$$
and we define ${\rm med}(Q)$ is a similar fashion. By continuity and convexity of $h_P$, the set ${\rm med}(P)$ is a closed convex subset of $\HH$.\smallskip

We first prove \Cref{TheorDepth} in a special case.

\begin{Lem}\label{LemNotInLine}
Let $P$ and $Q$ be Borel probability measures on $\HH$, and $u(t)$ be the gradient flow from (\ref{eq:GradFlow}). Assume that $P$ and $Q$ are not supported on a same line, or that either ${\rm med}(P)\subset {\rm med}(Q)$ or ${\rm med}(Q)\subset {\rm med}(P)$ holds. If $\|\Fpg(x)\|=\|F_Q^\g(x)\|$ for all $x$ belonging to a dense subset $\mathcal{D}\subset \HH$, then $P=Q$.
\end{Lem}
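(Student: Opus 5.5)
The plan is to show that $h_P-h_Q$ is constant on $\HH$. Since $h_P(0)=h_Q(0)=0$, this constant is zero, so $h_P=h_Q$. Differentiating then gives $\Fpg=F_Q^\g$ off $A_{P,Q}$, and \Cref{TheorCdf} yields $P=Q$.

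As recalled at the start of this section, density of $\DD$ and \Cref{LemCont} give $\|\Fpg\|=\|F_Q^\g\|$ on $\HH\sm A_{P,Q}$, so \Cref{lemma:Constant} applies. For each starting point $x_0$, the map $t\mapsto (h_P-h_Q)(u(t))$ is constant on $[0,T_\Phi)$. If $T_\Phi<\infty$, continuity extends this to $t=T_\Phi$, so $(h_P-h_Q)(x_0)=(h_P-h_Q)(u(T_\Phi))$ with $u(T_\Phi)\in\argmin\Phi$. If $T_\Phi=\infty$, I would use the standard fact (Brezis--Opial, presumably part of \Cref{PropGradFlow}) that $u(t)$ converges weakly to a point of $\argmin\Phi$ and that $\Phi(u(t))\to\min\Phi$. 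Since $h_P,h_Q$ are convex and continuous, they are weakly lower semicontinuous. Because $\Phi=h_P+h_Q$ converges, I would argue that $h_P(u(t))$ and $h_Q(u(t))$ converge to $h_P(x_\infty)$ and $h_Q(x_\infty)$ separately. Indeed, $\liminf h_P(u(t))\ge h_P(x_\infty)$, $\liminf h_Q(u(t))\ge h_Q(x_\infty)$, and the sum of the limits equals $h_P(x_\infty)+h_Q(x_\infty)$. In every case we get $(h_P-h_Q)(x_0)=(h_P-h_Q)(m)$ for some $m\in\argmin\Phi$.

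It therefore remains to show that $h_P-h_Q$ is constant on the closed convex set $\argmin\Phi$. This is where the hypotheses enter, and it is the main obstacle. The key fact is that $\argmin\Phi$ is a singleton unless $\Phi$ is affine on a segment. The function $z\mapsto\|x-z\|$ is affine along a segment $[a,b]$ only for those $z$ lying on the line through $a$ and $b$ and outside its open interior. So if $\argmin\Phi\supset[a,b]$ with $a\neq b$, then both $h_P$ and $h_Q$ are affine on $[a,b]$, which forces $P$ and $Q$ to be carried by that common line. Hence, when $P$ and $Q$ are not supported on a same line, $\argmin\Phi$ is a single point and the constancy is immediate.

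In the alternative case, ${\rm med}(P)\subset{\rm med}(Q)$ (the other inclusion is symmetric), I would first show $\argmin\Phi={\rm med}(P)\cap{\rm med}(Q)={\rm med}(P)\neq\emptyset$. This holds because any point of ${\rm med}(P)$ minimizes both $h_P$ and $h_Q$, hence minimizes $\Phi$. On this set $h_P\equiv\min h_P$ and $h_Q\equiv\min h_Q$, so $h_P-h_Q$ is constant there. This finishes the argument.

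I expect the delicate points to be the infinite-time limit (weak convergence and splitting the limit of the sum) and making the line-degeneracy argument rigorous. For the latter, the natural tool is the equality case of the triangle inequality integrated against $P$ and $Q$.
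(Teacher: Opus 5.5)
Your proposal is correct and follows the paper's route. You use Lemma~\ref{lemma:Constant} to get constancy of $h_P-h_Q$ along the flow, weak lower semicontinuity to pass to a limit point in $\argmin\Phi$, and then constancy of $h_P-h_Q$ on $\argmin\Phi$ from either uniqueness of the minimizer of $\Phi$ or the identity $\argmin\Phi={\rm med}(P)$. The only differences are cosmetic: you prove uniqueness of the minimizer directly via the equality case of the triangle inequality rather than invoking the known uniqueness of the spatial median, and you conclude with Theorem~\ref{TheorCdf}(ii) on $\HH\sm A_{P,Q}$ rather than first showing $\Fpg=F_Q^\g$ everywhere.
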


\begin{Proof}{Lemma \ref{LemNotInLine}}
     Fix $x_0\in\DD$ and let $u(t)$ be the corresponding gradient flow from (\ref{eq:GradFlow}). Since $h_P(u(t))$ and $h_Q(u(t))$ are absolutely continuous, \Cref{lemma:Constant} yields
		$$
		h_P(u(t))-h_Q(u(t)) 
		=
		h_P(x_0)-h_Q(x_0)
		,
		$$
		for all $t\in (0,T_\Phi)$. Since $\Phi$ is convex, continuous and coercive, then $\argmin\Phi$ is a nonempty closed convex set. In particular, if $T_\Phi<\infty$ then $u(T_\Phi)\in \argmin\Phi$. If $T_\Phi=\infty$, then Proposition~\ref{PropGradFlow}~(iii) and~(vi)  entail that 
        $\Phi(u(t))\to   \min \Phi $ and $u(t)\rightharpoonup u_\infty$ for some $u_\infty\in\argmin\Phi$. Since $h_P$ and $h_Q$ are weakly lower semi-continuous  \citep[Theorem~9.1]{Bauschke-Combettes.Springer.2017}, we have
        \begin{align*}
            \limsup_{t\to  \infty} h_P(u(t)) -  h_Q(u(t))&= \limsup_{t\to  \infty} \Phi(u(t)) - 2 h_Q(u(t))\\
            &\leq \Phi(u_\infty) - 2 h_Q(u_\infty)=   h_P(u_\infty)-h_Q(u_\infty) 
        \end{align*}
and
\begin{align*}
\liminf_{t\to\infty}\big(h_P(u(t))-h_Q(u(t))\big)
&=
\liminf_{t\to\infty}\big(2h_P(u(t))-\Phi(u(t))\big)\\
&\ge
2h_P(u_\infty)-\Phi(u_\infty)=
h_P(u_\infty)-h_Q(u_\infty).
\end{align*}
        Hence, irrespective of whether $T_\Phi$ is finite or infinite, there exists $u_\infty \in \argmin \Phi$ such that
        $$
		h_P(u_\infty)-h_Q(u_\infty) 
		=
		h_P(x_0)-h_Q(x_0)
		.
		$$
		Assume for now that the quantity $h_P(u_\infty)-h_Q(u_\infty)$ is independent of $x_0$. Then the map $x\mapsto h_P(x)-h_Q(x)$ is constant over $\DD$. The fact that $\DD$ is dense in $\HH$ and that $h_P$ and $h_Q$ are continuous over $\HH$ implies that $x\mapsto h_P(x)-h_Q(x)$ is constant over~$\HH$. Since $ h_P(0)=h_Q(0)=0$, we deduce that $h_P(x) = h_Q(x)$ for all $x\in\HH$. Taking directional derivatives as in the proof of Lemma~\ref{LemAbsCont} thus yields 
        $$
        \langle\Fpg(x) - F_Q^\g(x), v\rangle 
        =
        Q[\{x\}]-P[\{x\}]
        ,\spa\forall\ v\in \S,~ \forall\ x\in \HH
        .
        $$
        Since the r.h.s. in the last display is independent of $v$, we must have $\Fpg(x) = F_Q^\g(x)$ for all $x\in\HH$. Consequently, Theorem~\ref{TheorCdf} yields $P=Q$.\smallskip 
        
        It remains to show that the value of $h_P(u_\infty)-h_Q(u_\infty)$ is independent of $x_0$. First assume that $P$ and $Q$ are not supported on a same line. Then letting $\mu\equiv(P+Q)/2$, Theorem~1 in \cite*{PaiVir2021} entails that $h_\mu = \Phi/2$ admits a unique minimizer over $\HH$, which thus coincides with $u_\infty$. In particular, the value of $h_P(u_\infty)-h_Q(u_\infty)$ is independent of $x_0$. Assume now, without loss of generality, that ${\rm med}(P)\subset {\rm med}(Q)$, \ie $\argmin h_P\subset \argmin h_Q$. In particular, we have $\argmin h_P\subset \argmin \Phi$ so that $\inf\Phi =\inf h_P + \inf h_Q$, where all $\argmin$ and $\inf$ are taken over $\HH$. Since $u_\infty \in \argmin\Phi$, we thus have 
		$$
		h_P(u_\infty)
		=
		\inf \Phi - h_Q(u_\infty) 
		\leq \inf \Phi - \inf h_Q 
		=
		\inf h_P 
		,
		$$
		so that $h_P(u_\infty)=\inf h_P$. A similar reasoning yields $h_Q(u_\infty)=\inf h_Q$. In particular, the value of $h_P(u_\infty)-h_Q(u_\infty)$ is equal to $\inf h_P - \inf h_Q$ and is therefore independent of $x_0$. This concludes the proof.
\end{Proof}
%   We now conclude the proof of Theorem~\ref{TheorDepth} in the case where $P$ and
% $Q$ are not supported on a same line. Assume that $\|\Fpg(x)\|=\|F_Q^\g(x)\|$ for all $x$ belonging to a dense subset $\mathcal{D}\subset \HH$. By Lemma~\ref{LemNotInLine},
% $h_P=h_Q$ on $\HH$.  Hence $\Fpg(x)=F_Q^\g(x)$ for every $x\in\HH$.
% Theorem~\ref{TheorCdf} therefore implies $P=Q$.    \qed

\begin{Lem}\label{LemPdiffQForm}
    Let $P$ and $Q$ be Borel probability measures on $\HH$, and assume that $\|\Fpg(x)\|=\|F_Q^\g(x)\|$ for all $x$ in a dense subset $\DD\subset \HH$. If $P\ne Q$, then $P$ and $Q$ are supported on a same line $\LL$, they have a unique median, $m_P\in\LL$ and $m_Q\in\LL$, respectively, with $m_P\ne m_Q$ and $P[\{m_P\}]>0$ and $Q[\{m_Q\}]>0$. In addition, $P$ and~$Q$ coincide on the Borel subsets of $\HH\sm\{m_P,m_Q\}$, and we have 
    $$
    % P\big[\LL\sm [m_P,m_Q]\big]=Q\big[\LL\sm [m_P,m_Q]\big]
    % ,\quad
    % \text{and}\quad
    P[(m_P,m_Q)] 
    =
    Q[(m_P,m_Q)]
    =0
    ,
    $$
    where $(m_P, m_Q)$ 
    % and $[m_P,m_Q]$ 
    denotes the open line segment in $\HH$ joining $m_P$ and $m_Q$.
\end{Lem}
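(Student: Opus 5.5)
The plan is to argue by contraposition from Lemma~\ref{LemNotInLine}, and then to carry out a purely one-dimensional analysis on the common line. Assume $P\ne Q$. Lemma~\ref{LemNotInLine} then forces two things. First, $P$ and $Q$ are supported on a same line $\LL=\{a+sw: s\in\R\}$ with $\|w\|=1$. Second, neither ${\rm med}(P)\subset{\rm med}(Q)$ nor ${\rm med}(Q)\subset{\rm med}(P)$ holds. Identify $\LL$ with $\R$ and write $F_P,F_Q$ for the univariate cdfs of the pushforwards. Since $P$ is carried by $\LL$, $h_P$ restricted to $\LL$ is the usual univariate $L^1$-objective. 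Hence ${\rm med}(P)$ and ${\rm med}(Q)$ are compact intervals $[a_P,b_P]$ and $[a_Q,b_Q]$ of $\LL$, and the non-nestedness says that, after possibly swapping the roles of $P$ and $Q$, we have $a_P<a_Q$ and $b_P<b_Q$.

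Next I transfer the depth equality to the line. Points $x=y+s v$ with $y\in\LL$, $v\perp w$ a unit vector and $s\neq 0$ are never atoms, so by the continuity argument at the start of this section $\|\Fpg(x)\|=\|F_Q^\g(x)\|$ holds there. Letting $s\to0$ by dominated convergence, the component along $w$ tends to the integral of $\mathrm{sign}(y-z)$ over $z\ne y$. The perpendicular component tends to $P[\{y\}]v$, which vanishes when $y$ is not an atom. We thus get
\[
|P(z<y)-P(z>y)| = |Q(z<y)-Q(z>y)|
\]
for every non-atom $y\in\LL$, that is, $|2F_P-1|=|2F_Q-1|$ off a countable set. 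Now $2F_P(y)-1$ is negative for $y<a_P$, nonnegative on the median interval up to atom effects, and positive for $y>b_P$; the analogous statement holds for $Q$. Consequently $F_P=F_Q$ at non-atoms of $(-\infty,a_P)$ and of $(b_Q,\infty)$, and $F_P+F_Q=1$ at non-atoms of $(b_P,a_Q)$ whenever that interval is nonempty.

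The key step, which I expect to be the main obstacle, is proving that the medians are unique. Suppose $a_P<b_P$. Then $P$ puts no mass in $(a_P,b_P)$ and $2F_P-1=0$ there, so $2F_Q-1=0$ at non-atoms of $(a_P,b_P)$. This places $(a_P,b_P)$ inside ${\rm med}(Q)$, and by closedness $[a_P,b_P]\subset{\rm med}(Q)$, contradicting non-nestedness. The symmetric argument applies to $Q$. Hence ${\rm med}(P)=\{m_P\}$ and ${\rm med}(Q)=\{m_Q\}$ with $m_P<m_Q$.

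It remains to describe the masses. On $(m_P,m_Q)$ both $F_P$ and $F_Q$ are nondecreasing and their sum equals $1$ off a countable set. A constant sum of two nondecreasing functions forces each to be constant, so $P[(m_P,m_Q)]=Q[(m_P,m_Q)]=0$. On $(-\infty,m_P)$ and on $(m_Q,\infty)$ the cdfs agree off a countable set, hence everywhere by right-continuity, so $P$ and $Q$ agree on Borel subsets of $\HH\sm[m_P,m_Q]$. Since neither measure charges the open segment, they agree on all Borel subsets of $\HH\sm\{m_P,m_Q\}$.

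Finally, for the atoms, suppose $P[\{m_P\}]=0$. Then $F_P$ is continuous at $m_P$ with $F_P(m_P)=1/2$, and equal to the constant value $F_P(m_P)$ on $[m_P,m_Q)$. Combining this with $F_Q=1-F_P$ on $(m_P,m_Q)$ and $F_Q=F_P$ to the left of $m_P$ gives $F_Q(m_P^-)=F_Q(m_P)=1/2$. This puts $m_P\in{\rm med}(Q)$, contradicting uniqueness of $m_Q\ne m_P$. Symmetrically $Q[\{m_Q\}]>0$, which completes the plan.
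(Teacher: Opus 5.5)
Your proposal is correct and follows essentially the same route as the paper. The shared steps are: contraposition of Lemma~\ref{LemNotInLine}, reduction to univariate cdfs on $\LL$, uniqueness of the medians via non-nestedness, the monotonicity argument forcing $F_P$ and $F_Q$ to be constant on $(m_P,m_Q)$, and an atom argument at $m_P$ and $m_Q$ (your contradiction with the uniqueness of ${\rm med}(Q)$ is equivalent to the paper's observation that $\alpha>0$). The perpendicular limit $s\to0$ in your transfer step is unavailable when $\dim\HH=1$, which the lemma allows, but it is also unnecessary: the identity $|2F_P-1|=|2F_Q-1|$ at non-atoms of $P+Q$ on $\LL$ already follows directly from the continuity argument at the start of the section.
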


\begin{Proof}{Lemma \ref{LemPdiffQForm}}
    Assume that $P\ne Q$ so that $P$ and $Q$ are supported on a same line by virtue of Lemma~\ref{LemNotInLine}, and denote by $\LL=\{z_0 + \lambda v : \lambda\in\R\}$ the common supporting line of $P$ and $Q$. Denote by $\Lambda_P$ a random variable such that $z_0+\Lambda_P v$ has law $P$, and by $\Lambda_Q$ the corresponding random variable for $Q$. Observe that 
    $$
    \Fpg(z_0+\lambda v) 
    = 
    \big(\P(\Lambda < \lambda) - \P(\Lambda > \lambda)\big) v
    =
    \big(2F_{\Lambda_P}(\lambda)-1-\Lambda_P[\{\lambda\}]\big)v
    ,
    $$
    where $F_{P_\lambda}$ stands for the usual univariate cdf of $\Lambda_P$. The same computation yields a similar identity for $F_Q^\g(z_0+\lambda v)$ in terms of $\Lambda_Q$. We observed at the beginning of Section~\ref{Section:characterization} that $\|\Fpg(x)\|=\|F_Q^\g(x)\|$ for all $x$ such that $P[\{x\}]+Q[\{x\}]=0$, which provides
    \begin{equation}\label{eq:CdfUni}
    \big\lvert 2F_{\Lambda_P}(\lambda)-1-\Lambda_P[\{\lambda\}] \big\rvert
    =
    \big\lvert 2F_{\Lambda_Q}(\lambda)-1-\Lambda_Q[\{\lambda\}] \big\rvert
    ,
    \end{equation}
    for all $\lambda\in\R$ such that $\Lambda_P[\{\lambda\}]+\Lambda_Q[\{\lambda\}]=0$. Theorem~1(iv) in \cite*{PaiVir2021} entails that ${\rm med}(P)$ and ${\rm med}(Q)$ are both subsets of $\LL$ and thus coincide with ${\rm med}(\Lambda_P)$ and ${\rm med}(\Lambda_Q)$, respectively. Since they are closed and convex, they are of the form ${\rm med}(\Lambda_P)\equiv [m_P^-, m_P^+]$ and ${\rm med}(\Lambda_Q)\equiv [m_Q^-, m_Q^+]$, where $m_P^- = m_P^+$ and $m_Q^-=m_Q^+$ are both allowed. Also note from (\ref{eq:SubGradHp}) or (14) in \cite*{KonStu2025} that any $x_0\equiv z_0+\lambda_0v\in {\rm med}(P)$ is characterized by $\|\Fpg(x_0)\|\le P[\{x_0\}]$ which, in our case, rewrites as 
    \begin{equation}\label{eq:CharacCdfUni}
    \frac12 \le F_{\Lambda_P}(\lambda_0)\le \frac12 + \Lambda_P[\{\lambda_0\}]
    .
    \end{equation}
    \smallskip 

    We start by establishing that $m_P^- = m_P^+$ and $m_Q^-=m_Q^+$. Assume, \textit{ad absurdum}, that $m_P^-<m_P^+$. Since the set of atoms of $\Lambda_P$ is at most countable, we have $F_{\Lambda_P}(\lambda)=1/2$ for almost all $\lambda\in [m_P^-,m_P^+]$. Because $F_{P_\lambda}$ is monotone, we then have $F_{\Lambda_P}(\lambda)=1/2$ for all $\lambda\in (m_P^-, m_P^+)$ and, in fact, for all $\lambda\in [m_P^-, m_P^+)$ by right-continuity. Also note that $|2F_{\Lambda_P}(\lambda)-1|=|2 F_{\Lambda_Q}(\lambda)-1|$ for almost every $\lambda\in\R$. In particular, we have $|2F_{\Lambda_Q}(\lambda)-1|=0$ for almost all $\lambda\in [m_P^-, m_Q^+]$ which, by the same reasoning as before, yields $F_{\Lambda_Q}(\lambda)=1/2$ for all $\lambda\in [m_P^-, m_P^+)$. It follows that $[m_P^-, m_P^+)\subset {\rm med}(Q)$ which, since ${\rm med}(Q)$ is closed, yields 
    $$
    {\rm med}(P)=[m_P^-, m_P^+]\subset {\rm med}(Q)
    .
    $$
    Consequently, Lemma~\ref{LemNotInLine} entails that $P=Q$, which contradicts our base assumption. We deduce that $m_P^-=m_P^+$. A similar reasoning entails that $m_Q^-=m_Q^+$. \smallskip 
    
    Then let ${\rm med}(P) \equiv \{m_P\}$ and ${\rm med}(Q)  \equiv\{m_Q\}$ with $m_P\ne m_Q$ (otherwise ${\rm med}(P)={\rm med}(Q)$ and this would again yield a contradiction). Without loss of generality, assume that $m_P<m_Q$. For almost all $\lambda\in (-\infty, m_P)$, eq. (\ref{eq:CdfUni}) entails that $|2F_{\Lambda_P}(\lambda)-1|=|2F_{\Lambda_Q}-1|$ so that, by virtue of the characterization~(\ref{eq:CharacCdfUni}), we have $F_{\Lambda_P}(\lambda)=F_{\Lambda_Q}(\lambda)$. This holds for all $\lambda\in (\infty, m_P)$ by approximation and right-continuity. A similar reasoning entails that $F_{\Lambda_P}=F_{\Lambda_Q}$ on $(m_Q,\infty)$. Right-continuity thus implies that 
    $$
    F_{\Lambda_P}(\lambda) 
    = 
    F_{\Lambda_Q}(\lambda)
    ,\spa 
    \forall\ \lambda \in (-\infty, m_P)\cup [m_Q,\infty)
    .
    $$    
    The characterization (\ref{eq:CharacCdfUni}) entails that
    $$
    F_{\Lambda_Q}(\lambda)
    <
    \frac12 
    <
    F_{\Lambda_P}(\lambda)
    ,\spa 
    \forall\ \lambda\in (m_P, m_Q)
    ,
    $$
    so that $2F_{\Lambda_P}(\lambda)-1 > 0 > 2F_{\Lambda_Q}-1$ over $(m_P, m_Q)$. Now observe that, still on $(m_P,m_Q)$, then $|2F_{\Lambda_P}-1| = 2F_{\Lambda_P}-1$ is monotone non-decreasing and $|2F_{\Lambda_Q}-1|= -(2F_{\Lambda_Q}-1)$ is monotone non-increasing. Recalling from~(\ref{eq:CdfUni}) that $|2F_{\Lambda_P}-1|=|2F_{\Lambda_Q}-1|$ almost every where on $(m_P, m_Q)$ thus entails that $2F_{\Lambda_P}-1$ and $2F_{\Lambda_Q}-1$ must be constant on $(m_P, m_Q)$ with opposite values, hence also on $[m_P, m_Q)$ by right-continuity. In particular, $\Lambda_P$ and $\Lambda_Q$ give no mass to $(m_P,m_Q)$. Let us write 
    $$
    F_{\Lambda_P}(\lambda)
    =
    \frac12 + \alpha 
    ,\quad 
    \text{and}\quad 
    F_{\Lambda_Q}(\lambda) = \frac12 - \alpha
    ,\spa 
    \forall\ \lambda\in [m_P, m_Q)
    ,
    $$
    for some $\alpha>0$; we cannot have $\alpha=0$ for this would imply that $F_{\Lambda_P} = F_{\Lambda_Q}$ over $\R$ to the effect that $\Lambda_P=\Lambda_Q$ hence also $P=Q$, which would contradict the assumption that $P\ne Q$. Now observe that 
    $$
    \lim_{\lambda\stackrel{<}{\to}m_P} F_{\Lambda_P}(\lambda)
    =
    \lim_{\lambda\stackrel{<}{\to}m_P} F_{\Lambda_Q}(\lambda)
    \le 
    F_{\Lambda_Q}(m_P) 
    =
    \frac12-\alpha 
    <
    \frac12 + \alpha 
    =
    F_{\Lambda_P}(m_P)
    .
    $$
    We deduce that $F_{\Lambda_P}$ is discontinuous at $m_P$, hence $m_P$ is an atom of $\Lambda_P$. Similarly,
    $$
    \lim_{\lambda\stackrel{<}{\to}m_Q} F_{\Lambda_Q}(\lambda)
    =
    \frac12 - \alpha 
    <
    \frac12 + \alpha 
    =
    \lim_{\lambda\stackrel{<}{\to}m_Q} F_{\Lambda_P}(\lambda)
    \le 
    F_{\Lambda_P}(m_Q)
    =
    F_{\Lambda_Q}(m_Q)
    ,
    $$
    so that $m_Q$ is an atom of $\Lambda_Q$, which concludes the proof.
\end{Proof}

\begin{Lem}\label{LemTaylorNorm}
    Fix $y\in\HH\sm\{0\}$ and $v\in \HH$ such that $\|v\|=1$ and $\ps{y}{v}=0$. Then, as $r\to 0$ we have
    $$
    \frac{y+rv}{\|y+rv\|}
    =
    \frac{y}{\|y\|}
    % +
    % \frac{r}{\|y\|}\Big({\rm Id} - \frac{yy^T}{\|y\|^2}\Big) v 
    -
    \frac{y}{\|y\|^3} \frac{r^2}{2} 
    % \Big( \frac{\ps{y}{v}v-y}{\|y\|^3}\Big)
    +
    o(r^2) 
    .
    $$
\end{Lem}

\begin{Proof}{Lemma \ref{LemTaylorNorm}}
    Fix an arbitrary $y\in\HH\sm\{0\}$ and $v\in\HH$ such that $y+rv\ne 0$, and let $f(r) = (y+rv)/\|y+rv\|$. Direct computations provide 
    $$
    f'(r) 
    =
    \frac{1}{\|y+rv\|}\Big( v - \Big\langle \frac{y+rv}{\|y+rv\|} , v\Big\rangle \frac{y+rv}{\|y+rv\|} \Big)
    ,
    $$
    and 
    $$
    f''(r) 
    =
    \frac{1}{\|y+rv\|^2}
    \Bigg\{ 
    \Big( 3\big\langle \frac{y+rv}{\|y+rv\|}, v\big\rangle^2 - \|v\|^2 \Big) \frac{y+rv}{\|y+rv\|} - 2\big\langle \frac{y+rv}{\|y+rv\|}, v\big\rangle v 
    \Bigg\}
    .
    $$
    Now, if $\ps{y}{v}=0$ and $\|v\|=1$, a second-order Taylor expansion at $r=0$ yields the conclusion.
\end{Proof}

\begin{Prop}\label{PropPDiffQDim2}
    Let $P$ and $Q$ be Borel probability measures on $\HH$, and assume that $\|\Fpg(x)\|=\|F_Q^\g(x)\|$ for all $x$ in a dense subset $\DD\subset \HH$. If $\dim \HH\ge 2$, then $P \ne Q$ only if there exist $m_P,m_Q\in\HH$ distinct and $\beta\in (1/2,1]$ such that 
    $$
    P
    =
    \beta \delta_{m_P} + (1-\beta) \delta_{m_Q}
    ,\quad 
    \text{and}\quad
    Q
    =
    (1-\beta) \delta_{m_P} + \beta \delta_{m_Q}
    ,
    $$
    where $\delta_x$ denotes the Dirac probability measure at $x$.
\end{Prop}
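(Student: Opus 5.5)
Starting from Lemma~\ref{LemPdiffQForm}, assume $P\ne Q$. Then $P$ and $Q$ are supported on a common line $\LL=\{z_0+\lambda v\}$ and have unique medians $m_P<m_Q$ on $\LL$, which are atoms of $P$ and $Q$ respectively. Moreover, $P=Q$ on $\HH\sm\{m_P,m_Q\}$, and $(m_P,m_Q)$ carries no mass. The extra information available when $\dim\HH\ge 2$ is that the identity $\|\Fpg(x)\|=\|F_Q^\g(x)\|$ also holds off the line. Indeed, every $x\notin\LL$ lies outside $A_{P,Q}\subset\LL$, so the equality holds at every such $x$ by continuity. The plan is to exploit points $x=m+rw$ with $w\perp v$, $\|w\|=1$, close to the atoms, and to expand in $r$ using Lemma~\ref{LemTaylorNorm}.

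Write $\mu=P-Q$. Then $\mu=a\delta_{m_P}+b\delta_{m_Q}$ for some reals $a,b$, and $a+b=0$ since both measures have mass one. Set $a=P[\{m_P\}]-Q[\{m_P\}]$. Because $\Fpg-F_Q^\g=\int K(x-z)\,d\mu(z)$ with $K(y)=y/\|y\|$, we get
\[
\Fpg(x)-F_Q^\g(x)=a\bigl(K(x-m_P)-K(x-m_Q)\bigr).
\]
Since $\|\Fpg\|^2-\|F_Q^\g\|^2=\langle \Fpg-F_Q^\g,\Fpg+F_Q^\g\rangle$, the hypothesis becomes
\[
a\,\bigl\langle K(x-m_P)-K(x-m_Q),\,\Fpg(x)+F_Q^\g(x)\bigr\rangle=0
\]
for all $x\notin\LL$. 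If $a=0$ then $P=Q$, so we may assume $a\ne0$ and divide it out.

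Take $x=m_P+rw$ with $r>0$ small. Then $K(x-m_P)=w$, while $K(x-m_Q)=-v+O(r)$. For $G\equiv\Fpg+F_Q^\g$, the atoms at $m_P$ contribute $(P+Q)[\{m_P\}]\,w$. The remaining mass of $P+Q$ sits at points $z=m_P+\lambda v$ with $\lambda\ne0$, and these contribute $\int K(rw-\lambda v)\,d(P+Q)$. The $v$-component of this integral tends to $-\bigl((P+Q)[\lambda>0]-(P+Q)[\lambda<0]\bigr)$, and its $w$-component is $O(r)$ when the mass is bounded away from $m_P$, with dominated convergence handling the general case. Inserting these into the orthogonality relation and letting $r\to0$, the leading order $O(1)$ term reads
\[
\langle w+v,\ (P+Q)[\{m_P\}]\,w+c_P v\rangle = (P+Q)[\{m_P\}]+c_P,
\]
where $c_P=(P+Q)[\lambda<0]-(P+Q)[\lambda>0]$ is computed in the coordinates centered at $m_P$. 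This gives one linear relation. Doing the same at $m_Q$ gives a second. The main obstacle is showing that these relations force all the mass to lie on $\{m_P,m_Q\}$. The first-order relations may not be enough, so I would push the expansion to higher order in $r$ using Lemma~\ref{LemTaylorNorm}. I would also vary the base point along $\LL$, using $x=z_0+\lambda v+rw$ for $\lambda$ outside $[m_P,m_Q]$. At such points $\langle K(x-m_P)-K(x-m_Q),G(x)\rangle$ is, to order $r$, proportional to the $w$-component of $G$ multiplied by an explicit nonzero factor. The identity then forces a weighted integral $\int \frac{d(P+Q)(z)}{|\lambda-\lambda_z|}$-type quantity to be balanced against the atoms. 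Positivity of these expressions will show that $P+Q$ places no mass outside $[m_P,m_Q]$, hence none outside $\{m_P,m_Q\}$.

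Once $P=\beta\delta_{m_P}+(1-\beta)\delta_{m_Q}$ and $Q=\gamma\delta_{m_P}+(1-\gamma)\delta_{m_Q}$, I would evaluate the identity at a generic off-line point. A direct computation of $\|\Fpg(x)\|^2=\beta^2+(1-\beta)^2+2\beta(1-\beta)\cos\theta$, where $\theta$ is the angle at $x$, shows that the identity holds iff $\beta(1-\beta)=\gamma(1-\gamma)$. Hence $\gamma=1-\beta$, since $\gamma=\beta$ would give $P=Q$. The median conditions from Lemma~\ref{LemPdiffQForm}, namely that $m_P$ is the unique median of $P$, then force $\beta>1/2$.
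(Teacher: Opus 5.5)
Your reduction matches the paper's. Lemma~\ref{LemPdiffQForm} gives $P-Q=a(\delta_{m_P}-\delta_{m_Q})$, so off $\LL$ the hypothesis becomes $\ps{K(x-m_P)-K(x-m_Q)}{G(x)}=0$ with $G=\Fpg+F_Q^\g$, which is (\ref{eq:IdentityMu}). Your closing computation for two-point measures is also correct, and your two zeroth-order relations at $m_P$ and $m_Q$ are in fact the same relation.

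\textbf{The gap.} The decisive step, that $P+Q$ puts no mass outside $[m_P,m_Q]$, is only asserted via ``positivity'', and positivity cannot deliver it. Identify $\LL$ with $\R$ so that $m_P=0$ and $m_Q=D>0$. Let $\nu$ be the restriction of $P+Q$ to $\LL\sm[m_P,m_Q]$, and let $\nu_-,\nu_+$ be its masses on $(-\infty,0)$ and $(D,\infty)$. Expand at $x=x_\lambda+rw$, where $x_\lambda\in\LL$ has coordinate $\lambda<0$.
\begin{itemize}
\item The order-$r$ term only shows that $x_\lambda$ is not an atom.
\item At order $r^2$, the line-components of $K(x-m_P)-K(x-m_Q)$ and of $G$ also enter, and after dividing by $\frac{1}{-\lambda}-\frac{1}{D-\lambda}>0$ you get
\[
\int\frac{d(P+Q)(s)}{|\lambda-s|}=\Big(\frac{1}{-\lambda}+\frac{1}{D-\lambda}\Big)\big(1-\nu((-\infty,\lambda))\big).
\]
\end{itemize}
Both sides are positive, and the identity is satisfied by $P+Q=\delta_{m_P}+\delta_{m_Q}$, i.e.\ by the genuine counterexamples. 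So no sign argument applied to it alone can force $\nu=0$. Pushing the expansion at the atoms to higher order is not a reliable substitute: $\nu$ may accumulate at $m_P$, and then $G(m_P+rw)$ need not have a second-order expansion in $r$.

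\textbf{How to close it.} You need a quantitative step. Write the zeroth-order relation as $(P+Q)[\{m_P\}]=1-\nu_-$ and $(P+Q)[\{m_Q\}]=1-\nu_+$. Substituting, the display becomes
\[
\int_{s<\lambda}\Big(\frac{1}{\lambda-s}+\frac{1}{D-\lambda}\Big)d\nu+\int_{\lambda<s<0}\Big(\frac{1}{s-\lambda}-\frac{1}{-\lambda}\Big)d\nu=\int_{s>D}\Big(\frac{1}{D-\lambda}-\frac{1}{s-\lambda}\Big)d\nu .
\]
All integrands here are nonnegative. This rules out $\nu\neq0$ only when $\nu$ charges a single side of $[m_P,m_Q]$. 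In general, let $\lambda\uparrow 0$ (Fatou on the left side), do the mirror argument at $m_Q$, and add the two inequalities. This gives $\int_{s<0}\big(\frac{1}{|s|}+\frac{1}{D-s}\big)d\nu+\int_{s>D}\big(\frac{1}{s-D}+\frac1s\big)d\nu\le 0$, hence $\nu=0$. The order-$r^2$ limit also excludes the case $\int|\lambda-s|^{-1}d(P+Q)=\infty$.

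\textbf{Comparison with the paper.} The paper instead expands at points $m_t+rw$ with $m_t\in(m_P,m_Q)$, which stay at positive distance from the support of $\nu$. It obtains one relation for every $t\in(0,1)$ and rules out $\nu\neq0$ by a convexity argument along the segment. In either route, keep the first-order term $rw/\|y\|$ of $(y+rw)/\|y+rw\|$, which Lemma~\ref{LemTaylorNorm} as stated omits. That term contributes at order $r^2$ to the inner products, and at your outside points it is exactly what produces $\int|\lambda-s|^{-1}d(P+Q)$.
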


\begin{Proof}{Proposition \ref{PropPDiffQDim2}}
    A direct computation shows that when $P$ and $Q$ are as in the statement, then $\|\Fpg(x)\|=\|F_Q^\g(x)\|$ for all $\HH\sm\{m_P, m_Q\}$. So, assume that $P\ne Q$ and let us show that $P$ and $Q$ have the form prescribed in the statement. Lemma~\ref{LemPdiffQForm} entails that $P$ and $Q$ are supported on a same line $\LL$, have a unique median $m_P\in\LL$ and $m_Q\in\LL$, respectively, and that there exists a probability measure $\mu$ supported on $\LL\sm [m_P,m_Q]$, where $[m_P,m_Q]$ denotes the line segment in $\HH$ joining $m_P$ and $m_Q$, such that 
    $$
    P
    =
    c\mu + \beta \delta_{m_P} + (1-c-\beta) \delta_{m_Q}
    ,\quad 
    \text{and}\quad
    Q
    =
    c\mu + (1-c-\gamma) \delta_{m_P} + \gamma \delta_{m_Q}
    ,
    $$
    for some $c\ge 0$ and $\beta,\gamma>0$. In what follows, we will show that $c=0$ and $\gamma=\beta$, which will provide the conclusion. For all $x\in\HH$, we have
    $$
    F_P^\g(x) 
    =
    c F_\mu^\g(x) 
    + 
    \beta \frac{x-m_P}{\|x-m_P\|} \Ind{x\ne m_P}
    +
    (1-c-\beta) \frac{x-m_Q}{\|x-m_Q\|} \Ind{x\ne m_Q}
    ,
    $$
    and
    $$
    F_Q^\g(x) 
    =
    c F_\mu^\g(x) 
    + 
    (1-c-\gamma) \frac{x-m_P}{\|x-m_P\|} \Ind{x\ne m_P}
    +
    \gamma \frac{x-m_Q}{\|x-m_Q\|} \Ind{x\ne m_Q}
    .
    $$
    Recall from the beginning of Section \ref{Section:characterization} that $\|\Fpg(x)\|=\|F_Q^\g(x)\|$ for all $x\in\HH$ such that $P[\{x\}]+Q[\{x\}]=0$. In particular, this holds for all $x\in \HH\sm \LL$ since $P$ and $Q$ are supported on $\LL$. Plugging in the expressions for $\Fpg$ and $F_Q^\g$ above, squaring and expanding yields, for all $x\in\HH\sm \LL$
    \begin{equation}\label{eq:IdentityMu}
   c
    \Big\langle 
    F_\mu^\g(x) , \frac{x-m_P}{\|x-m_P\|} - \frac{x-m_Q}{\|x-m_Q\|}
    \Big\rangle 
    =
    (\gamma-\beta)\Big( 1 - \Big\langle \frac{x-m_P}{\|x-m_P\|} , \frac{x-m_Q}{\|x-m_Q\|} \Big\rangle\Big)
    .
    \end{equation}
    Fix $t\in (0,1)$ and let $m_t\equiv m_P + t(m_Q-m_P) \in (m_P,m_Q)$. Since $\dim \HH\ge 2$, fix $v\in\HH$ such that $\ps{v}{m_Q-m_P}=0$ and $\|v\|=1$. Then, for any $r>0$, $x\equiv m_t + rv$ belongs to $\HH\sm \LL$. For such $x$, Lemma~\ref{LemTaylorNorm} entails that, as $r\to 0$,
    $$
    \frac{x-m_P}{\|x-m_P\|} 
    =
    \frac{m_Q-m_P}{\|m_Q-m_P\|}
    -
     \frac{r^2}{2t^2} \frac{m_Q-m_P}{\|m_Q-m_P\|^3} 
     +
    o(r^2)
    .
    $$
    and 
    $$
    \frac{x-m_Q}{\|x-m_Q\|}
    =
    - \frac{m_Q-m_P}{\|m_Q-m_P\|}
    + \frac{r^2}{2(1-t)^2} \frac{m_Q-m_P}{\|m_Q-m_P\|^3} 
    +
    o(r^2)
    .
    $$
    Similarly, for $x=m_t + rv$, we have as $r\to 0$
    $$
    F_\mu^\g(x) 
    =
    \int_\HH \frac{x-z}{\|x-z\|}\, d\mu(z) 
    =
    F_\mu^\g(m_t) - \frac{r^2}{2} \int_\HH \frac{m_t-z}{\|m_t-z\|^3}\, d\mu(z) 
    +
    o(r^2)
    ,
    $$
    also taking note that $m_t-z$ is uniformly bounded away from $0$ when $z$ lies in the support of $\mu$ since $m_t\in (m_P, m_Q)$ and $\mu$ is supported in $\HH\sm [m_P, m_Q]$. Letting $B_t\equiv 1/t^2 + 1/(1-t)^2$, and plugging the previous expansions in (\ref{eq:IdentityMu}) yields 
    \begin{eqnarray*}
    \lefteqn{
    % \hspace{-4mm}
    2c \Big\langle F_\mu^\g(m_t), \frac{m_Q-m_P}{\|m_Q-m_P\|} \Big\rangle 
     - 
    cr^2\Bigg\{ 
    \frac{B_t}{2} \Big\langle F_\mu^\g(m_t),  \frac{m_Q-m_P}{\|m_Q-m_P\|^3} \Big\rangle 
    } 
    \\[2mm]
    &&
    \hspace{54mm}
    +~
    \Big\langle \int_\HH \frac{m_t-z}{\|m_t-z\|^3}\, d\mu(z) , \frac{m_Q-m_P}{\|m_Q-m_P\|} \Big\rangle
    \Bigg\} 
    \\[2mm]
    &&
    =
    2 (\gamma-\beta) 
    - 
     \frac{(\gamma-\beta)B_t}{2\|m_Q-m_P\|^2}~ r^2
    +
    o(r^2)
    .
    \end{eqnarray*}
    It follows that 
    \begin{equation}\label{eq:TaylorId1}
    c \Big\langle F_\mu^\g(m_t), \frac{m_Q-m_P}{\|m_Q-m_P\|} \Big\rangle 
    =
    \gamma-\beta
    ,
    \end{equation}
    and
    \begin{eqnarray}\label{eq:TaylorId2}
    \lefteqn{
    \hspace{-60mm}
    c\Bigg\{ 
    \frac{B_t}{2} \Big\langle F_\mu^\g(m_t),  \frac{m_Q-m_P}{\|m_Q-m_P\|^3} \Big\rangle 
    +~
    \Big\langle \int_\HH \frac{m_t-z}{\|m_t-z\|^3}\, d\mu(z) , \frac{m_Q-m_P}{\|m_Q-m_P\|} \Big\rangle
    \Bigg\} 
    }
    \\[2mm]\nonumber
    \hspace{-40mm}
    =
     \frac{(\gamma-\beta)B_t}{2\|m_Q-m_P\|^2}
     .
    \end{eqnarray}
    Plugging (\ref{eq:TaylorId1}) into (\ref{eq:TaylorId2}) yields
    $$
    c\Big\langle \int_\HH \frac{m_t-z}{\|m_t-z\|^3}\, d\mu(z) , \frac{m_Q-m_P}{\|m_Q-m_P\|} \Big\rangle
    =
    0
    .
    $$
    Since $m_t$ and $z$ lie in $\LL$, then $m_t-z$ is proportional to $m_Q-m_P$, to the effect that the previous display yields 
    $$
    c\int_\HH \frac{m_t-z}{\|m_t-z\|^3}\, d\mu(z)
    =
    0
    .
    $$
    Now assume, \textit{ad absurdum}, that $c\ne 0$, so that 
    $$
    \int_\HH \frac{m_t-z}{\|m_t-z\|^3}\, d\mu(z)=0
    .
    $$
    Define the map $g_z(x) = 1/\|x-z\|$ for $z\in \LL\sm [m_P, m_Q]$ and $x\in [m_P, m_Q]$. Then $g_z$ is concave on $[m_P, m_Q]$ and its directional derivative $(\partial g_z)/(\partial v)$ in any direction $v\in \S$ is given by a constant multiple of $(x-z)/\|x-z\|^3$. Consequently, all directional derivatives of the map
    $$
    G(x) 
    \equiv 
    \int_\HH \frac{1}{\|x-z\|}\, d\mu(z) 
    ,
    $$
    vanish at $m_t$. Since $G$ is well-defined and concave over $(m_P, m_Q)$, we deduce that $G$ admits a (local over the line segment $(m_P, m_Q)$) maximum at $m_t$. Since this holds for all $t\in (0,1)$ 
    % and $G$ is continuous over $(m_P, m_Q)$ (by a direct application of the dominated convergence theorem, since $x$ and $z$ are uniformly separated), 
    we deduce that $G$ is constant over $(m_P, m_Q)$. Recalling that $G(x) = \int_\HH g_z(x)\, d\mu(z)$ and $g_z$ is concave over $(m_P, m_Q)$ as well, then together with the fact that $G$ is constant over $(m_P, m_Q)$ this yields $g_z(m_t) = (1-t) g(m_P) + t g(m_Q)$ for all $t\in (0,1)$ and $\mu$-almost every $z\in \LL\sm [m_P, m_Q]$. The latter rewrites
    $$
    \frac{1}{\|m_P + (1-t)(m_Q-m_P)-z\|} 
    =
    \frac{1-t}{\|m_P-z\|} 
    +
    \frac{t}{\|m_Q-z\|}
    ,\spa 
    \forall\ t\in (0,1)
    .
    $$
    Since there exists no such $z\in\LL\sm [m_P, m_Q]$, we deduce that $c=0$. It follows from~(\ref{eq:IdentityMu}) that $\gamma=\beta$. We have $\beta\ge 1/2$ since $m_P$ is the unique median of $P$, and $\beta\ne 1/2$ since otherwise $P=Q$, which concludes the proof.
\end{Proof}

It remains to establish that $P=Q$ when the spatial cdfs coincide \textit{everywhere}. 

\begin{Prop}\label{PropPEqualQDim1}
    Let $P$ and $Q$ be Borel probability measures on $\HH$. If $\|\Fpg(x)\|=\|F_Q^\g(x)\|$ for all $x\in\HH$, then $P=Q$.
\end{Prop}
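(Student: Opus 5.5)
Assume $\|\Fpg(x)\|=\|F_Q^\g(x)\|$ for every $x\in\HH$ and, \emph{ad absurdum}, that $P\ne Q$. The hypothesis holds in particular on the dense set $\DD=\HH$, so Lemma~\ref{LemPdiffQForm} applies. It gives a common supporting line $\LL=\{z_0+\lambda v:\lambda\in\R\}$ and distinct unique medians $m_P,m_Q\in\LL$ with $P[\{m_P\}]>0$ and $Q[\{m_Q\}]>0$. Moreover, $P$ and $Q$ coincide on Borel subsets of $\HH\sm\{m_P,m_Q\}$ and give no mass to $(m_P,m_Q)$. It remains to show that equality of the depths \emph{at the atoms themselves} rules out this configuration.

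Parametrize as in the proof of Lemma~\ref{LemPdiffQForm}, with $m_P=z_0+\lambda_P v$, $m_Q=z_0+\lambda_Q v$ and $\lambda_P<\lambda_Q$. That proof shows $F_{\Lambda_P}=\tfrac12+\alpha$ and $F_{\Lambda_Q}=\tfrac12-\alpha$ on $[\lambda_P,\lambda_Q)$ for some $\alpha>0$. It also shows that the two cdfs coincide on $(-\infty,\lambda_P)\cup[\lambda_Q,\infty)$. Let $L\equiv\lim_{\lambda\uparrow\lambda_P}F_{\Lambda_P}(\lambda)=\lim_{\lambda\uparrow\lambda_P}F_{\Lambda_Q}(\lambda)$. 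For $x=z_0+\lambda v\in\LL$ the paper's formula gives $\Fpg(x)=\big(\P(\Lambda_P<\lambda)-\P(\Lambda_P>\lambda)\big)v$. Evaluating at $x=m_P$ gives
$$
\Fpg(m_P)=\big(L-(1-F_{\Lambda_P}(\lambda_P))\big)v=(L-\tfrac12+\alpha)v,
\qquad
F_Q^\g(m_P)=(L-\tfrac12-\alpha)v.
$$
The hypothesis at $m_P$ then reads $|L-\tfrac12+\alpha|=|L-\tfrac12-\alpha|$, which forces $L=\tfrac12$ because $\alpha>0$.

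The same computation at $m_Q$ uses $\lim_{\lambda\uparrow\lambda_Q}F_{\Lambda_P}=\tfrac12+\alpha$, $\lim_{\lambda\uparrow\lambda_Q}F_{\Lambda_Q}=\tfrac12-\alpha$, and the common value $R\equiv F_{\Lambda_P}(\lambda_Q)=F_{\Lambda_Q}(\lambda_Q)$. It gives $|\tfrac12+\alpha-(1-R)|=|\tfrac12-\alpha-(1-R)|$, hence $R=\tfrac12$. But $R\ge\tfrac12+\alpha>\tfrac12$, since $F_{\Lambda_P}$ is non-decreasing and equals $\tfrac12+\alpha$ just to the left of $\lambda_Q$. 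This is a contradiction, so $P=Q$. As a check, the constraint at $m_P$ alone is not contradictory: $L=\tfrac12$ is compatible with $F_{\Lambda_Q}(\lambda_P)=\tfrac12-\alpha$ only if $L\le\tfrac12-\alpha$, which already fails. Either atom therefore gives the contradiction, and it is safer to record both.

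The main obstacle is bookkeeping the one-sided limits correctly, namely using $\P(\Lambda<\lambda)$ rather than $F(\lambda)$ at an atom. One must also make sure that the facts from the proof of Lemma~\ref{LemPdiffQForm} are legitimately available. These are the constancy of the cdfs on $[m_P,m_Q)$ with values $\tfrac12\pm\alpha$ and their equality outside $[m_P,m_Q)$. If needed, I would re-derive them briefly from the conclusions of Lemma~\ref{LemPdiffQForm}: no mass on $(m_P,m_Q)$, equality of $P$ and $Q$ away from $\{m_P,m_Q\}$, and the median characterization~(\ref{eq:CharacCdfUni}). The case $\dim\HH\ge2$ poses no extra difficulty, since the argument only evaluates the hypothesis at the two points $m_P,m_Q\in\LL$. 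Alternatively, in that case Proposition~\ref{PropPDiffQDim2} reduces to the two-Dirac form, and a direct check at $m_P$ gives $|\beta-(1-\beta)|$ versus $|\beta-\beta|=0$ after removing the own atom, so $\beta=\tfrac12$, a contradiction.
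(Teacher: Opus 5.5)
Your argument is correct, but it reaches the contradiction by a different route than the paper.

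\textbf{The paper's route.} It never uses the one-dimensional cdf structure in this step. For arbitrary $x\in\HH$ and $v\in\S$, it approaches $x$ along non-atoms $x+s_kv$. It uses $\Fpg(x+s_kv)\to\Fpg(x)+vP[\{x\}]$, as in the proof of Lemma~\ref{LemCont}, and squares the identity $\|\Fpg(x)+vP[\{x\}]\|=\|F_Q^\g(x)+vQ[\{x\}]\|$. The $v$-dependent term is linear in $v$, so it must vanish. This leaves $\|\Fpg(x)\|^2-\|F_Q^\g(x)\|^2=Q[\{x\}]^2-P[\{x\}]^2$. The hypothesis at $x$ itself then forces $P[\{x\}]=Q[\{x\}]$ for every $x$. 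Together with the coincidence of $P$ and $Q$ off $\{m_P,m_Q\}$, this gives $P=Q$.

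\textbf{Your route.} You evaluate both spatial cdfs explicitly at the atoms via $\P(\Lambda<\lambda)-\P(\Lambda>\lambda)$. The bookkeeping is right: at $m_Q$ you get $R=\tfrac12$ against $R\ge\tfrac12+\alpha$, and at $m_P$ you get $L=\tfrac12$ against $L\le\tfrac12-\alpha$.

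\textbf{Comparison.} The paper's route is shorter and needs neither the line parametrization nor the cdf values. It relies only on the statement of Lemma~\ref{LemPdiffQForm}. It also yields a stronger general fact: equal depths everywhere force equal atom masses at every point. Your route is more concrete, but it depends on quantities that appear only inside the proof of that lemma. If you re-derive them, note that the median characterization only gives $F_{\Lambda_P}>\tfrac12>F_{\Lambda_Q}$ on $[\lambda_P,\lambda_Q)$. The symmetric values $\tfrac12\pm\alpha$ that your computation uses come from the depth equality at points of $(m_P,m_Q)$.

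Two small slips:
\begin{itemize}
\item Your ``check'' sentence says the constraint at $m_P$ alone is not contradictory, and then shows that it is.
\item In the two-Dirac aside, the correct values are $\|\Fpg(m_P)\|=1-\beta$ and $\|F_Q^\g(m_P)\|=\beta$, not $|\beta-(1-\beta)|$ and $0$. The conclusion $\beta=\tfrac12$ is unchanged.
\end{itemize}
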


\begin{Proof}{Proposition \ref{PropPEqualQDim1}}
    Assume, \textit{ad absurdum}, that $P\ne Q$. Then, Lemma \ref{LemPdiffQForm} entails that $P$ and $Q$ have a unique median, $m_P$ and $m_Q$, respectively, and that $P$ and $Q$ coincide on the Borel subsets of $\HH\sm\{m_P,m_Q\}$. We will show that $P$ and $Q$ also coincide at $m_P$ and $m_Q$, to the effect that $P=Q$, which will bring a contradiction. Fix $x\in\HH$, $v\in \S$, and $(s_k)\subset (0,\infty)$ such that $s_k\to 0$ and $x_k\equiv x+s_k v$ satisfies $P[\{x_k\}]=Q[\{x_k\}]=0$ for all $k$. Then, a direct computation yields 
    $$
    F_P^\g(x_k) 
    \to 
    F_P^\g(x) + vP[\{x\}]
    ,\quad \text{and}\quad 
    F_Q^\g(x_k) 
    \to 
    F_Q^\g(x) + vQ[\{x\}]
    .
    $$
    We deduce that
    $$
    \|F_P^\g(x) + vP[\{x\}]\| 
    =
    \| F_Q^\g(x) + vQ[\{x\}]\| 
    .
    $$
    Squaring and expanding provides
    $$
    \|F_P^\g(x)\|^2 - \|F_Q^\g(x)\|^2 + P[\{x\}]^2 - Q[\{x\}]^2 
    =
    2 \big\langle v , Q[\{x\}]F_Q^\g(x) - P[\{x\}]F_P^\g(x)\big\rangle
    .
    $$
    Since this holds for any $v\in \S$ and the l.h.s. of the previous display is independent of $v$, we deduce that 
    $$
    0
    =
    \|\Fpg(x)\|^2 - \|F_Q^\g(x)\|^2 
    =
    Q[\{x\}]^2 - P[\{x\}]^2
    ,\spa\ \forall\ x\in\HH 
    .
    $$
    In particular, we have $P[\{m_P\}]=Q[\{m_P\}]$ and $P[\{m_Q\}]=Q[\{m_Q\}]$. Consequently, we have $P=Q$, which is a contradiction hence concludes the proof.
\end{Proof}

\section{Proof of Theorem \ref{TheorInvertible}}\label{Section:soothbes}

In this section we show some results regarding the smoothness of the quantile map, obtained as the inverse of the spatial cdf $\Fpg$. We start by establishing that the quantile map is well-defined and continuous with respect to the weak topology $\TT_{\rm weak}$ in $\HH$.
% ; throughout the strong (or norm) topology is denoted $\TT_{\rm strong}$. 
The weak convergence in $\HH$ is denoted as $x_n\rightharpoonup x$, while we denote strong (or norm) convergence by $x_n\to x$. Also recall from Section~\ref{secIntro} that $\B$ stands for the open unit ball in $\HH$ and $\S$ stands for the unit  sphere of $\HH$.\smallskip 

We say that $x_0\in \HH$ is a geometric quantile of order $\alpha\in (0,1)$ in direction $u\in \S$ if
$$ x_0\in \argmin_{x \in \HH} M_{\alpha,u}^P(x), \quad  M_{\alpha,u}^P(x)\equiv\int \big\{ \|z-x\|-\|z\| - \alpha\langle u, x \rangle \big\} \, dP(z).
 $$
If the set of geometric quantiles of order $\alpha \in (0,1)$ and direction $u \in \S$ is a singleton, we denote its unique element by $\Qpg(\alpha u)$. The following results provide sufficient conditions under which $\Qpg$ is well defined and coincides with the inverse of $\Fpg$.

\begin{Prop}\label{PropExistUniq}
    Let $P$ be a Borel probability measure on $\HH$. Fix $\alpha\in (0,1)$ and $u\in \S$. (i) $P$ admits at least one geometric quantile of order $\alpha$ in direction $u$. \linebreak (ii) If $P$ is not supported on a single line of $\HH$, then $P$ admits a unique geometric quantile of order $\alpha$ in direction $u$.
\end{Prop}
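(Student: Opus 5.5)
The plan is to treat the objective $M_{\alpha,u}^P(x)=h_P(x)-\alpha\langle u,x\rangle$ with the convex-analytic tools already used in Section~\ref{Section:characterization}. The map is well defined, convex, and continuous on $\HH$, since $h_P$ is, and $|\|z-x\|-\|z\||\le\|x\|$. Part (i) then reduces to coercivity, after which Proposition~11.15 in \cite*{Bauschke-Combettes.Springer.2017} yields a global minimizer. Weak lower semicontinuity of continuous convex functions also works here: bounded sublevel sets are weakly compact.

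For coercivity, write $x=\lambda w$ with $w\in\S$ and $\lambda\to\infty$. For each fixed $z$, $(\|z-\lambda w\|-\|z\|)/\lambda\to 1$. Dominated convergence applies with the bound $1$, but we need uniformity in $w$, which is the delicate point in infinite dimension. I would argue directly. Fix $R$ with $P[\{\|z\|\le R\}]\ge 1-\varepsilon$. For $\|z\|\le R$ we have $\|z-x\|-\|z\|\ge\|x\|-2R$, and for the remaining mass $\|z-x\|-\|z\|\ge-\|x\|$. This gives
$$
M_{\alpha,u}^P(x)\ge (1-\varepsilon)(\|x\|-2R)-\varepsilon\|x\|-\alpha\|x\|=(1-2\varepsilon-\alpha)\|x\|-2R(1-\varepsilon).
$$
Choosing $\varepsilon<(1-\alpha)/2$ makes this tend to $+\infty$ uniformly in directions. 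That proves (i).

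For (ii), uniqueness follows from strict convexity of $h_P$ when $P$ is not supported on a single line, because the linear term does not affect strict convexity. This is the content behind Theorem~1 in \cite*{PaiVir2021}, which was already invoked for uniqueness of the median. I would reprove it briefly to be safe. Suppose $x_1\ne x_2$ are both minimizers. By convexity the whole segment consists of minimizers, so $M$ is affine on $[x_1,x_2]$. Each integrand $x\mapsto\|z-x\|$ is convex, so affinity of the integral forces $t\mapsto\|z-x_1-t(x_2-x_1)\|$ to be affine on $[0,1]$ for $P$-a.e.\ $z$. Equality in the triangle inequality in a Hilbert space means $z-x_1$ and $z-x_2$ are nonnegatively collinear, so $z$ lies on the line through $x_1,x_2$. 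Hence $P$ is supported on that line, a contradiction.

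The main care points are uniformity in coercivity, handled above, and justifying the a.e.\ affinity step. For the latter, note that the integrated convexity gap $\int\{(1-t)\|z-x_1\|+t\|z-x_2\|-\|z-x_t\|\}\,dP(z)$ vanishes and has a nonnegative integrand. So the integrand vanishes $P$-a.e.\ for each rational $t$, hence for all $t$ by continuity.
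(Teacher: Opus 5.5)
Your proposal is correct. It is the standard Kemperman-type argument that the paper invokes only by citation (Kemperman's Theorems 3.6 and 2.17, Romon's Corollary 2.18, and Konen--Paindaveine's Theorem 2.1): existence comes from uniform coercivity of the continuous convex objective on the reflexive space $\HH$, and uniqueness comes from strict convexity of $h_P$ when $P$ is not carried by a line, via the equality case of the triangle inequality. One cosmetic slip: with $p=P[\{\|z\|\le R\}]\ge 1-\varepsilon$, the correct bound is $(2p-1-\alpha)\|x\|-2pR\ge(1-2\varepsilon-\alpha)\|x\|-2R$; your constant $-2R(1-\varepsilon)$ is only justified for $\|x\|\ge R$, which is harmless for coercivity.
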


\begin{Proof}{\Cref{PropExistUniq}}
    Part (i) and Part (ii) can be proved similarly to Theorem~3.6 and Theorem~2.17 in \cite*{Kem1987}; see also Corollary~2.18 in \cite*{Romon2022} and Theorem~2.1 in \cite*{KonPai2025} with $\rho(t)=t$. 
\end{Proof}

\begin{Prop}\label{PropQuantileInvert}
Let $P$ be a Borel probability measure on $\HH$, and assume that $P$ is non-atomic and not supported on a single line of $\HH$. Then, the quantile map\linebreak $\Qpg : \B\to \HH$ is invertible with inverse~$(\Qpg)^{-1}=\Fpg$. In addition, $\Qpg$ is continuous from $(\mathbb{B},\|\cdot\|)$ to $(\HH,\|\cdot\|)$. 
\end{Prop}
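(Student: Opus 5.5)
The plan is to read everything off the first-order optimality condition for $M_{\alpha,u}^P$. Since $P$ is non-atomic, Lemma~\ref{LemCont} gives that $\Fpg$ is continuous on $\HH$. The subgradient formula (\ref{eq:SubGradHp}) gives $(\partial h_P)(x)=\{\Fpg(x)\}$ for every $x$, so $h_P$ is Gâteaux differentiable with gradient $\Fpg$; continuity of the gradient then upgrades this to Fréchet differentiability. Because $M_{\alpha,u}^P(x)=h_P(x)-\alpha\langle u,x\rangle$ is convex, a point $x_0$ minimizes $M_{\alpha,u}^P$ if and only if $\Fpg(x_0)=\alpha u$.

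\emph{Bijectivity.} By Proposition~\ref{PropExistUniq}(ii), for each $\alpha\in(0,1)$ and $u\in\S$ the equation $\Fpg(x)=\alpha u$ has exactly one solution, namely $\Qpg(\alpha u)$. We also need $\alpha=0$, i.e.\ the point $0\in\B$: the spatial median exists and is unique by Theorem~1 in \cite*{PaiVir2021}, and it is characterized by $\Fpg=0$. Hence $\Fpg(\Qpg(y))=y$ for all $y\in\B$, so $\Qpg$ is injective.

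Surjectivity of $\Qpg$ onto $\HH$ requires $\|\Fpg(x)\|<1$ for every $x$. Equality $\|\Fpg(x)\|=1$ would mean that $(x-z)/\|x-z\|$ is $P$-a.s.\ equal to a fixed unit vector. Then $P$ would be supported on a half-line through $x$, contradicting the assumption that $P$ is not supported on a line. So every $x$ equals $\Qpg(\Fpg(x))$ by uniqueness, and $(\Qpg)^{-1}=\Fpg$.

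\emph{Continuity (the main obstacle).} Take $y_n\to y$ in $\B$ and set $x_n=\Qpg(y_n)$. The steps are as follows.
\begin{enumerate}
\item \emph{Boundedness of $(x_n)$.} Along any sequence with $\|x_n\|\to\infty$, choose a subsequence with $x_n/\|x_n\|\wto w$. Dominated convergence gives $\langle \Fpg(x_n),x_n/\|x_n\|\rangle\to 1$. This contradicts $\|y_n\|\le \|y\|+o(1)<1$. A cleaner route is coercivity: $M_{\alpha,u}^P(x)\ge (1-\|y_n\|)\|x\|-C$, with the constant uniform in $n$.
\item \emph{Weak limit.} Extract $x_n\wto x^*$. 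Since the functionals $M_n(x)=h_P(x)-\langle y_n,x\rangle$ converge uniformly on bounded sets to $M(x)=h_P(x)-\langle y,x\rangle$, and $M$ is weakly lower semicontinuous, $x^*$ minimizes $M$. By uniqueness $x^*=\Qpg(y)$, so the whole sequence converges weakly to $\Qpg(y)$.
\item \emph{Upgrade to strong convergence.} Use $M_n(x_n)\to\min M=M(x^*)$. The obstacle is that weak convergence together with convergence of values need not imply norm convergence. I would try local strong convexity of $h_P$ near $x^*$ along directions transverse to the data: because $P$ is not on a line, $\langle D^2h_P(x)v,v\rangle=\int \|x-z\|^{-1}\bigl(1-\langle v,(x-z)/\|x-z\|\rangle^2\bigr)dP(z)$ stays bounded below. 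Alternatively, use the identity $\int (\|x_n-z\|-\|x^*-z\|)\,dP\to0$ together with the strict convexity of $\|\cdot\|$ to show $\|x_n-z\|\to\|x^*-z\|$ for $P$-many $z$ with varied directions. Polarization then gives $\|x_n-x^*\|\to0$.
\end{enumerate}
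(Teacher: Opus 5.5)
Your bijectivity argument and Steps 1--2 are correct and follow essentially the paper's route: the first-order condition $\Fpg(x)=\alpha u$, boundedness, and identification of the weak limit via weak lower semicontinuity plus uniqueness. Your direct checks that $\|\Fpg(x)\|<1$, and that injectivity of $\Fpg$ follows from uniqueness of quantiles, are more self-contained than the paper's citations. One small correction: the ``cleaner route'' in Step 1 overstates things. Without a first moment, $h_P(x)\ge\|x\|-C$ can fail; for the Cauchy law on $\R$, $h_P(x)=|x|-\tfrac{2}{\pi}\log|x|+O(1)$. Only $h_P(x)\ge(1-\varepsilon)\|x\|-C_\varepsilon$ holds, which still suffices, and your first route is fine anyway.

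The genuine gap is Step 3, the crux of continuity, which you leave as two things to try. The Hessian route is not an argument as it stands. First, $D^2h_P$ need not exist: for $P$ uniform on a circle in a $2$-plane, $\int\|x-z\|^{-1}\,dP(z)=\infty$ at every point $x$ of the circle, and such points are quantiles. Second, turning curvature into quadratic growth of $M$ at $x^*$, uniformly over unit directions of an infinite-dimensional space, needs estimates you do not provide. Your second route ends correctly: $x_n\wto x^*$ together with $\|x_n-z\|\to\|x^*-z\|$ for a \emph{single} $z$ already forces $x_n\to x^*$. Its middle step is unjustified, though. The convergence $\int(\|x_n-z\|-\|x^*-z\|)\,dP(z)\to0$ gives no pointwise information, because the integrand has no sign. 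Strict convexity of the norm does not supply that information, and ``$P$-many $z$ with varied directions'' is beside the point.

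The missing idea is to compute the pointwise limit explicitly from the Hilbert structure. If $x_n\not\to x^*$, pass to a subsequence with $\|x_n-x^*\|\to r>0$. Then for every $z$,
\[
\|x_n-z\|^2=\|x_n-x^*\|^2+2\ps{x_n-x^*}{x^*-z}+\|x^*-z\|^2\longrightarrow r^2+\|x^*-z\|^2,
\]
since the cross term vanishes by weak convergence. The integrand satisfies $\bigl|\|x_n-z\|-\|x^*-z\|\bigr|\le\|x_n-x^*\|$, which is bounded, so dominated convergence gives
\[
\int(\|x_n-z\|-\|x^*-z\|)\,dP(z)\to\int\bigl(\sqrt{r^2+\|x^*-z\|^2}-\|x^*-z\|\bigr)\,dP(z)>0.
\]
This contradicts your identity. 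That identity is valid: $M_n(x_n)\le M_n(x^*)$ and $|M_n(x)-M(x)|\le\|y_n-y\|\,\|x\|$ give $M(x_n)\to M(x^*)$, while $\ps{y}{x_n-x^*}\to0$.

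The paper runs the same computation at the level of $\Fpg$ rather than $h_P$. Using $\Fpg(q_k)=\alpha_ku_k$ and $\Fpg(q)=\alpha u$, it shows that $\ps{\alpha_ku_k-\alpha u}{q_k-q}=\ps{\Fpg(q_k)-\Fpg(q)}{q_k-q}$ tends both to $0$ and to $\int r^2(r^2+\|q-z\|^2)^{-1/2}\,dP(z)>0$. Completed this way, your value-based version is just as short and does not use non-atomicity in this step.
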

%Arguing as in the proof of Proposition~6.1 in \cite*{KonPai1}, it is easy to prove continuity from $(B(H),\TT_{\rm weak})$ to $(H, \TT_{\rm weak})$ where $\TT_{\rm weak}$ is the weak topology on $H$, but it's not clear whether $\TT_{\rm weak}$ can be replaced by $\|\cdot\|$. 

\begin{Proof}{\Cref{PropQuantileInvert}}
Invertibility follows from (by now) standard arguments in the literature on geometric quantiles: it relies on the fact that $x\in \HH$ is a geometric quantile of order $\alpha\in [0,1)$ in direction $u\in \S$ for $P$ if and only if
\begin{equation}\label{eq:1stOrderCond}
\|\Fpg(x)-\alpha u\| 
\le 
P[\{x\}]
.
\end{equation}
This is a well-known fact when $\HH$ is finite-dimensional (see, e.g., eq. (14) in \cite*{KonStu2025}). When $\HH$ is infinite-dimensional it follows, similarly, from the convexity of the objective function $M_{\alpha,u}^P$ since, as in Theorem~4.14 of \cite*{Kem1987}, global minimizers $x\in \HH$ of~$M_{\alpha,u}^P$ are thus characterized by the first-order condition $(\partial M_{\alpha,u}^P)(x)/(\partial v)\ge 0$ for all $v\in \HH$, where we let 
$$
\frac{\partial M_{\alpha,u}^P}{\partial v}(x)
\equiv 
\lim_{t\downarrow 0} \frac{M_{\alpha,u}^P(x+tv)-M_{\alpha,u}^P(x)}{t}
=
P[\{x\}]\|v\| + \ps{v}{\Fpg(x)-\alpha u}
.
$$
Consequently, the first-order condition indeed reduces to (\ref{eq:1stOrderCond}). Injectivity of $\Fpg$ follows as in the proof of Theorem~5.2 in \cite*{KonStu2025} with $\mathfrak{r}\equiv1$, while the surjectivity of $\Fpg$ follows from the existence of geometric quantiles in Proposition \ref{PropExistUniq}(i) and the non-atomicity of $P$ combined with (\ref{eq:1stOrderCond}). It follows from (\ref{eq:1stOrderCond}) that $\Qpg\circ \Fpg = {\rm Id}_H$ and $\Fpg\circ \Qpg = {\rm Id}_{\mathbb{B}}$, which concludes this part of the proof. \smallskip 

Let us now prove continuity of $\Qpg$. For this purpose, let $(\alpha_k u_k)\subset \B$ and $\alpha u\in\B$ be such that $\alpha_k u_k\to \alpha u$, and let us prove that $\Qpg(\alpha_k u_k)\to \Qpg(\alpha u)$. For this purpose, let us first show that $q_k\equiv \Qpg(\alpha_k u_k)$ is bounded. By definition, we have $M^P_{\alpha_k, u_k}(q_k) \le M^P_{\alpha_k, u_k}(x)$ for any fixed $x\in\HH$. Since $(M^P_{\alpha_k, u_k}(x))_k$ is bounded, then $M^P_{\alpha_k, u_k}(q_k)$ is upper-bounded. Consequently, Proposition~2.1 in \cite*{KonPai2025} and Lemma~S.2.2 in \cite*{KonPai1Supp} (with $\rho(t)\equiv t$ and $\psi(t)\equiv 1$) entail that $(q_k)$ is bounded. We now show, as an intermediary step, that $q_k\wto q\equiv \Qpg(\alpha u)$. Recall that, as a consequence of the Banach-Alaoglu theorem (see, e.g., Theorem~3.29 in \cite*{BrezisFunctional}), the weak topology on any bounded set of $\HH$ is metrizable since $\HH$ is separable. As a metric space, we thus have $q_k\wto q$ if and only if every subsequence of $(q_k)$ admits a further subsequence converging weakly to $q$. By abuse of notation, let us still denote by $(q_k)$ an arbitrary subsequence. Because $(q_k)$ is bounded, it admits a subsequence $(q_{k_\ell})$ converging weakly to some $q_*$. A straightforward adaptation of Lemma~2.1 in \cite*{KonPai2025} entails that $(\alpha u, z)\mapsto M^P_{\alpha,u}(z)$ is lower-semicontinuous on $(\B,\TT_{\rm strong})\times (\HH,\TT_{\rm weak})$. Recalling that 
\begin{equation}\label{eq:MIneq}
M^P_{\alpha_{k_\ell}, u_{k_\ell}}(q_{k_\ell})
\le 
M^P_{\alpha_{k_\ell},u_{k_\ell}}(x)
,
\end{equation}
then taking $\liminf$ on both sides of (\ref{eq:MIneq}) yields 
$$
M^P_{\alpha,u}(q_*)
\le 
\liminf_{\ell\to\infty} M^P_{\alpha_{k_\ell}, u_{k_\ell}}(q_{k_\ell})
\le 
\liminf_{\ell\to\infty} M^P_{\alpha_{k_\ell},u_{k_\ell}}(x)
=
M^P_{\alpha, u}(x)
,
$$
where the last equality follows from weak continuity of $\alpha u\mapsto M^P_{\alpha,u}(x)$. Since the last display holds for all $x\in\HH$, we deduce that $q_*$ is a global minimizer of $M^P_{\alpha,u}$ which, by uniqueness (see Proposition~\ref{PropExistUniq}(ii)) entails that $q_*=\Qpg(\alpha u)$. We deduce that $q_k\wto q$. Let us now show that this weak convergence upgrades to a strong one. Assume, \textit{ad absurdum}, that $(q_k)$ does not converge strongly to $q$. But $(q_k)$ is bounded since it converges weakly. Consequently, up to
% We now turn to strong-to-strong continuity $\Qpg$. Let $(\alpha_k u_k)\subset\mathbb B$
% and $v\in\mathbb B$ be such that $    \|\alpha_k u_k-v\|\to0.$
% Set
% \[
%     q_k\equiv\Qpg(\alpha_k u_k),\qquad x\equiv\Qpg(v).
% \]
% Since $\|v\|<1$, there exists $\rho\in(0,1)$ such that $\|\alpha_k u_k\|\le \rho$ for all
% sufficiently large $n$. Then by Proposition~16.17 in \cite*{Bauschke-Combettes.Springer.2017} 
% $(q_k)$ is bounded. Let $(x_{n_k})$ be an arbitrary subsequence. By boundedness and reflexivity, it
% admits a further subsequence, still denoted $(x_{n_k})$, such that $  x_{n_k}\rightharpoonup x_\ast .$ 
% Since $v_{n_k}\to v$ strongly and $v_{n_k} = \nabla h_P(x_{n_k})=  F_P^g(x_{n_k})$ (recall here that we have shown that $\partial h_P(x_{n_k})$ is a singleton), the
% weak--strong closedness of the graph of the maximal monotone operator
% $\partial h_P$ yields $v= F_P^g(x_\ast)$; see \cite[Proposition~17.31]{Bauschke-Combettes.Springer.2017} 
% Thus, since $x\equiv\Qpg(v)$, by uniqueness we derive   $  q_k\rightharpoonup x .$  We show that the convergence is actually strong. Suppose, toward a contradiction,
% that $q_k$ does not converge strongly to $x$. 
passing to a subsequence, we may
assume that
\[
    \|q_k-q\|\to r>0 .
\]
Since $P$ is non-atomic, $P[\{q_k\}]=P[\{q\}]=0$, and therefore
$\Fpg(q_k)=\alpha_k u_k$ and $\Fpg(q)=\alpha u$ by virtue of~(\ref{eq:1stOrderCond}). Then,
$$
\langle \alpha_k u_k- \alpha u,h_k\rangle
=
\int_\HH
\left\langle
\frac{q_k-z}{\|q_k-z\|}\Ind{z\ne q_k}
-
\frac{q-z}{\|q-z\|}\Ind{z\ne q},
q_k-q
\right\rangle\,dP(z).
$$
On the one hand, the l.h.s. of the last display converges to zero because $\alpha_k u_k\to \alpha u$ strongly and $(q_k-q)$ is
bounded. On the other hand, for every fixed $z\in\HH$ with $z\notin(q_k)$ and $z\ne q$ (hence for $P$-almost all $z\in\HH$), we have 
$$
\Big\langle
\frac{q_k-z}{\|q_k-z\|}
-
\frac{q-z}{\|q-z\|},
q_k-q
\Big\rangle
=
\frac{\|q_k-q\|^2}{\|q_k-z\|} + \frac{\ps{q-z}{q_k-q}}{\|q_k-z\|} - \frac{\ps{q-z}{q_k-q}}{\|q-z\|}
.
$$
Observe that $\|q_k-z\|^2 = \|q_k-q\|^2 + 2\ps{q_k-q}{q-z} + \|q-z\|^2$ converges to $r^2 + \|q-z\|^2$. The dominated convergence theorem thus entails that
$$
0 
=
\liminf_{k\to \infty}\ps{\alpha_k u_k - \alpha u}{q_k-q} 
=
\int_\HH \frac{r^2}{\sqrt{r^2+\|q-z\|^2}}\, dP(z) 
,
$$
a contradiction. We deduce that $q_k\to q$ in $\HH$, which concludes the proof.
% $$
% $h_k\rightharpoonup0$ and $\|h_k\|\to r$. Set $a\equiv q-z\neq0$, so that $q_k-z=a+h_k$. Hence
% \[
% \left\langle
% \frac{q_k-z}{\|q_k-z\|} \Ind{z\ne q_k}
% -
% \frac{q-z}{\|q-z\|}\Ind{z\ne q},
% h_k
% \right\rangle
% =
% \frac{\langle a,h_k\rangle+\|h_k\|^2}{\|a+h_k\|}
% -
% \left\langle \frac{a}{\|a\|},h_k\right\rangle .
% \]
% Since $h_k\rightharpoonup0$, we have $\langle a,h_k\rangle\to0$ and
% $\left\langle a/\|a\|,h_k\right\rangle\to0$. Since also $\|h_k\|\to r$, it follows that
% \[
% \|a+h_k\|^2
% =
% \|a\|^2+2\langle a,h_k\rangle+\|h_k\|^2
% \to
% \|a\|^2+r^2.
% \]
% Therefore,
% \[
% \left\langle
% \frac{q_k-z}{\|q_k-z\|}
% -
% \frac{x-z}{\|x-z\|},
% h_k
% \right\rangle
% \to
% \frac{r^2}{\sqrt{\|x-z\|^2+r^2}} .
% \]
%  Therefore, by the dominated convergence theorem,
% \[
% \begin{aligned}
% 0
% =
% \liminf_{n\to\infty}\langle \alpha_k u_k-v,h_k\rangle=
% \int_\HH
% \frac{r^2}{\sqrt{\|x-z\|^2+r^2}}\,dP(z)
% >0,
% \end{aligned}
% \]
% which is impossible. Hence $r=0$, and therefore $q_k\to x$ strongly. This proves
% that
% \[
%     \Qpg:(\mathbb B,\|\cdot\|)\to(\HH,\|\cdot\|)
% \]
% is continuous.
\end{Proof}

We provide sufficient conditions for the quantile map to be a diffeomorphism between the unit ball and the whole space. For a closed linear subspace $V \subset H$ we denote as $\Pi_V:H\to V$ the orthogonal projection onto $V$. For any Borel probability measure $P$ on $\HH$, we denote the pushforward of $P$ through $\Pi_V$ by $\Pi_V\# P$, \ie the probability measure $(\Pi_V\# P)(A)\equiv P(\Pi_V^{-1}(A))$ for all Borel sets $A\subset H$. For any $p\in [1,\infty)$ and integer $d\ge 1$, we denote by $L^p_\loc(\R^d)$ the collection of measurable maps $u:\R^d\to \R$ such that $\int_K |u(x)|\, dx < \infty$ for all compact sets $K\subset \R^d$. \smallskip

Let $U\subset \HH$ be an open set of $\HH$.  A map $F:U \to \HH$ is said to be of class $C^\ell$ if it is $\ell$-times continuously Fr\'echet differentiable; see p.~46 in \cite*{Deimling1985}.

\begin{Theor}\label{theorem:Manifold}
 Let $P$ be a Borel probability measure on $\HH$ and assume that there exists a linear subspace $V \subset \HH$ of finite dimension~$d\ge 2$ such that $\Pi_V \# P$ admits a (non-identically zero) density $f_V\in L^p_\loc(V)$ with respect to the $d$-dimensional Hausdorff measure on $V$, for some $p\in (d/(d-\ell),\infty]$ and integer $\ell\in [1,d-1]$. Then $\Qpg: \mathbb{B}\to \HH$ defines a $C^\ell$-diffeomorphism with inverse $\Fpg$.  
\end{Theor}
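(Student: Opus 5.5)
The plan is to first reduce to the setting of \Cref{PropQuantileInvert}, then prove that $\Fpg$ is $C^\ell$ on $\HH$ with invertible Fréchet derivative everywhere. The inverse function theorem in Banach spaces (e.g.\ Theorem~15.2 in \cite*{Deimling1985}) then makes $\Qpg=(\Fpg)^{-1}$ a $C^\ell$ map.

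\emph{Reduction.} Since $\Pi_V\#P$ has a density on $V$ with $d\ge 2$, $P$ is non-atomic: an atom at $y$ would give mass to the point $\Pi_V y$, which is Lebesgue-null. For the same reason $P$ is not supported on a line, since a line projects to a set of dimension at most one, which is null in $V$. Hence \Cref{PropQuantileInvert} applies, so $\Fpg:\HH\to\B$ is a continuous bijection with continuous inverse $\Qpg$. It remains to show smoothness and invertibility of the derivative.

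\emph{Regularity.} Write $x=(x_V,x_\perp)$ with $x_V=\Pi_V x$, and similarly for $z$. The kernel $K(y)=y/\|y\|$ is smooth away from $0$. Its $j$-th derivative is homogeneous of degree $-j$, so $\|D^jK(y)\|\lesssim \|y\|^{-j}\le \|y_V\|^{-j}$ with $y_V=\Pi_V y$. I would therefore differentiate $\Fpg(x)=\int K(x-z)\,dP(z)$ under the integral sign up to order $\ell$. The domination required is
$$
\int \|x_V-z_V\|^{-j}\,dP(z)=\int_V \|x_V-w\|^{-j} f_V(w)\,dw,
$$
uniformly for $x$ in a neighbourhood. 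Splitting into $\|x_V-w\|\le 1$ and its complement, the far part is bounded by $1$. For the near part, Hölder's inequality with $f_V\in L^p_\loc$ and $\|\cdot\|^{-j}\in L^{p'}_\loc(\R^d)$ requires $jp'<d$, i.e.\ $p>d/(d-j)$, which holds for all $j\le \ell$. The same bound, combined with the absolute continuity of the integral $\int \|x_V-w\|^{-jp'}\,dw$ over small balls, gives continuity in $x$ of the $\ell$-th derivative. One has to check that differentiating in directions outside $V$ causes no trouble; it does not, because the bound uses only $\|y\|\ge\|y_V\|$. The sets $\{z: z_V=x_V\}$ are $P$-null, so the singularity is integrable.

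\emph{Invertibility of $D\Fpg(x)$.} We have
$$
D\Fpg(x)=\int \frac{1}{\|x-z\|}\Big(I-\frac{(x-z)(x-z)^{T}}{\|x-z\|^2}\Big)\,dP(z),
$$
which is self-adjoint and nonnegative, so it suffices to show coercivity, $\langle D\Fpg(x)h,h\rangle\ge c\|h\|^2$. For unit $h$, the integrand is $\|x-z\|^{-1}\big(1-\langle u_z,h\rangle^2\big)$ with $u_z=(x-z)/\|x-z\|$. This is where I expect the \textbf{main obstacle}: in infinite dimension, positivity for each $h$ does not give a uniform lower bound over the non-compact sphere. To handle it, I would restrict to $z$ in a bounded set $B_R$ of positive $P$-mass, so that $\|x-z\|^{-1}\ge (R+\|x\|)^{-1}$. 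I would then write $D\Fpg(x)=A-C$, where $A=\int \|x-z\|^{-1}\,dP\cdot I$ is a positive multiple of the identity and $C=\int \|x-z\|^{-1}u_z u_z^{T}\,dP$. The operator $C$ is positive, trace-class and compact, with $\langle Ch,h\rangle<\langle Ah,h\rangle$ for every $h\neq 0$, because $P$ is not concentrated on the line through $x$ in direction $h$. Since $A-C$ is a scalar multiple of the identity minus a compact operator, it is Fredholm of index zero; being injective, it is therefore invertible with bounded inverse.

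This yields the $C^\ell$-diffeomorphism between $\HH$ and $\B$, and hence \Cref{TheorInvertible} follows by taking $p=\infty$ and letting $\ell=d-1$ range, where the local boundedness of $f_V$ gives $p=\infty>d/(d-\ell)$.
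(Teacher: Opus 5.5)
Your proposal is correct and follows essentially the paper's route: Hölder's inequality on the $V$-marginal to control $\int\|x-z\|^{-j}\,dP(z)$ locally uniformly, differentiation under the integral with uniform-integrability-based continuity of the top derivative, invertibility of $D\Fpg(x)$ via the ``positive multiple of the identity minus a compact operator'' Fredholm argument (which the paper gives as its alternative to citing Proposition~5.5 of \cite*{Romon2022}), and then the Banach inverse function theorem. There are two minor differences: you treat general $\ell$ directly, whereas the paper proves $\ell=1$ and defers $\ell>1$ to \cite*{Kon2025}; and you obtain non-atomicity and non-collinearity straight from the absolute continuity of $\Pi_V\#P$. One caveat on your closing sentence: applying the theorem with a single $V$ and $p=\infty$ yields at most $C^{d-1}$, since $\ell\le d-1$, so $C^\infty$ requires subspaces of unbounded dimension, as in the discussion following \Cref{Corollary:Manifold}.
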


\begin{Proof}{Theorem~\ref{theorem:Manifold}}
    % Let us verify that the assumptions of Proposition \ref{theorem:C-1-hyper} hold. 
% \end{Proof}
% \vspace{3mm}

% \begin{Prop}\label{theorem:C-1-hyper}
%      Let $P\in\PPP(H)$ and assume that $P$ is not supported on a single line of $\HH$. If $P$ satisfies the moment condition 
%      $$
%      \int_{\HH} \frac{1}{\|z-x\|}\, dP(z) 
%      <
%      \infty, 
%      \spa 
%      \forall\ x\in \HH 
%      ,
%      $$
%      % such that for any compact set $\mathcal{K}$ there exist $C,\epsilon\in (0,\infty)$ such that 
%      % \begin{equation}\label{eq:integra-condition}
%      %     \int \frac{1}{\|x-y\|^{1+\epsilon}} dP(x) \leq C <\infty
%      % , \quad\text{for all $y\in \mathcal{K}$}\end{equation}
%      % Assume  assume that $P$  does not give mass to straight lines. 
%      then, for any $\alpha\in (0,1)$, the quantile contour $\mathcal{C}_P(\alpha)$ is a nonempty hypersurface of class $C^1$.
% \end{Prop}

% Comment on the moment condition (non-uniformity)

% \begin{Proof}{Proposition~\ref{theorem:C-1-hyper}}
    % The proof is divided in several steps. First we show that $\Fpg$ is $\mathcal{C}^1$. Next we derive that  $\mathcal{C}_P(\alpha)$ is nonempty.  Then we show that for all $x\in \HH$, there exists $\HH$ with $\|h\|=1$ such that  $ \frac{d}{d t} \big\vert_{t=0}\| \Fpg(x+th) \|^2  \neq 0$. Finally, we conclude by the implicit function theorem. 
    We will first establish that $P$ is non-atomic and not supported on a single line of $\HH$, to the effect that $\Qpg:\mathbb{B}\to \HH$ is a bijection with inverse $(\Qpg)^{-1}=\Fpg$ (Proposition \ref{PropExistUniq}).
    % Notice that the moment condition implies that $P$ is non-atomic. Consequently, Proposition \ref{PropExistUniq} entails that $\Qpg:\mathbb{B}\to H$ is a bijection with inverse $(\Qpg)^{-1}=\Fpg$. 
    In particular, for any $\alpha\in (0,1)$, the quantile contour $\CC_P(\alpha)=\Qpg(\alpha \S)$ is non-empty, and $\CC_P(\alpha)$ is the image under $\Qpg$ of the smooth hypersurface $\alpha \S$. We will then establish that $\Qpg$ is a diffeomorphism of class $C^\ell$, which will yield the result. For this purpose, we will show that $\Fpg$ is $C^\ell$-Fréchet differentiable over $\HH$ with invertible derivative, and we will make use of a `Banach space version' of the inverse function theorem to conclude that $\Qpg$ is $C^\ell$-Fréchet differentiable over $\mathbb{B}$. 

    We only prove the case $\ell=1$. The case $\ell>1$ is similar and can be done as in the proof of Proposition~4.1 in \cite*{Kon2025}. For this purpose, choose a coordinate system on $V$ that realizes the identification $V\simeq \R^d$. For any $x\in \HH$, denote by $\tilde x\in \R^d$ the vector of $\R^d$ corresponding to $\Pi_V(x)\in V$. Since $p>d/(d-1)$, fix $\eta>0$ such that 
      \begin{equation}
      \label{eq:eta}
      \eta < \frac{d-1}{p}\Big(p-\frac{d}{d-1}\Big) 
      .
      \end{equation}
      Fix a bounded subset $E\subset \HH$, and $x\in E$. Since $\Pi_V\# P$ is non-degenerate, we have
    \begin{eqnarray*}
    \lefteqn{
    \int_{\HH} \frac{1}{\|x-z\|^{1+\eta}} dP(z)
    \le
    \int_{\HH} \frac{1}{\|\Pi_V(x-z)\|^{1+\eta}} dP(z)
    }
    \\[2mm]
    &&
    =
    \int_{\HH} \frac{1}{\|\Pi_V(x)-y\|^{1+\eta}} d(\Pi_V\# P)(y)
    =
    \int_{\R^d} \frac{1}{\|\tilde x -y_d\|^{1+\eta}} f_V(y_d)\, dy_d
    .
        \end{eqnarray*} 
    Letting $B(p)$ denote the unit ball in $\R^d$ centered at $p\in\R^d$, then H\"older's inequality entails that 
    \begin{align*}
        \int_{\R^d} \frac{1}{\|\tilde x-y_d\|^{1+\eta}} f_V(y_d)\, dy_d
        &\le 
        1 + \int_{B(\tilde x)}  \frac{1}{\|\tilde x-y_d\|^{1+\eta}}  f_V(y_d)  dy_d 
        \\
        &\le  
        1 + 
        \bigg(
        \int_{B(0)}  \frac{1}{\|y_d\|^{\frac{p(1+\eta)}{p-1}}} dy_d \bigg)^{1-\frac{1}{p}} 
        \bigg( \int_{B(\tilde x)} f_V(y_d)^p dy_d \bigg)^{\frac{1}{p}}.
    \end{align*}
Since $f_V\in L^p_\loc(\R^d)$ and $E$ is bounded, the last integral in the previous display is uniformly bounded over $x\in E$. The integral
$$
\int_{B(0)}  \frac{1}{\|y_d\|^{\frac{p(1+\eta)}{p-1}}} dy_d
$$
is finite if and only if $p(1+\eta)/(p-1) < d$, \ie 
$$
\eta < \frac{d-1}{p}\Big(p-\frac{d}{d-1}\Big) 
. 
$$
Since we chose $\eta>0$ such that (\ref{eq:eta}) holds, we deduce that 
\begin{equation}\label{eq:UI}
\sup_{x\in E} \int_{\HH} \frac{1}{\|x-z\|^{1+\eta}}\, dP(z) 
< 
\infty 
.
\end{equation}
Since this holds for any bounded set $E\subset \HH$, then $P$ is non-atomic. In addition, since $\dim V\ge 2$ and $\Pi_V\#P$ is absolutely continuous with respect to
Lebesgue measure on $V$, $P$ cannot be supported on a single line of $\HH$.  Consequently, Proposition~\ref{PropExistUniq} entails that $\Qpg$ is invertible with inverse $\Fpg$.   
Let us turn to $C^1$-Fréchet differentiability. In view of (\ref{eq:UI}), Proposition 5.5 in \cite*{Romon2022} entails that $\Fpg$ is Fréchet differentiable on $\HH$ with invertible Fréchet derivative $D\Fpg(x): \HH\to \HH$ at any $x\in \HH$ given by 
$$
D\Fpg(x)[h]
\equiv 
\int_{\HH} \frac{1}{\|x-z\|}\Big(h-\frac{\ps{h}{x-z}}{\|x-z\|} \frac{x-z}{\|x-z\|}\Big)\I[z\neq x]\, dP(z) 
,\spa 
\forall\ h\in \HH 
.
$$
% \com{ The moment condition to guarantee that $  \Fpg$ is {\bf continuously} differentiable.}
In addition, the uniform integrability established in (\ref{eq:UI}) entails that $x\mapsto D\Fpg(x)$ is continuous in the strong topology of the operator norm: for any sequence $(x_n)\subset \HH$ and $x\in \HH$ such that $x_n\to x$, we have
$$
\|D\Fpg(x_n)-D\Fpg(x)\|_{\rm op}
\equiv 
\sup_{\|h\|\le 1} \|D\Fpg(x_n)[h]-D\Fpg(x)[h]\|
\to 
0
,\spa 
n\to \infty 
.
$$
Consequently, the inverse function theorem for Banach spaces (see, e.g., Theorem 15.2 in \cite*{Deimling1985}) entails that $\Qpg$ is of class  $C^1$ over $\mathbb{B}$ and the result follows. 
\end{Proof}

The proof of Theorem~\ref{theorem:Manifold} uses the inverse function theorem for Banach spaces applied to $\Fpg$. The Fréchet derivative $D\Fpg(x)$ is invertible by \cite[Proposition~5.5]{Romon2022}, whose proof relies on Neumann series expansions and the fact that small perturbations of invertible bounded linear operators remain invertible.  An alternative approach is to observe that $D\Fpg(x)$ is a compact perturbation of a nonzero multiple of the identity on $\HH$, so that Fredholm theory applies. In particular, Fredholm's alternative implies that $D\Fpg(x)$ is a bounded linear isomorphism provided it is injective. Injectivity follows straightforwardly from the fact that $P$ is not concentrated on a single line (see, e.g., Lemma~S.8.1 in \cite*{KonPai1Supp} with $\rho(t)=t$, or Proposition~5.5(3) in \cite*{Romon2022}), and the result then follows.\smallskip

When $P$ is not supported on a single line of $\HH$, so that quantiles are unique, we define the quantile contours of $P$ as
$$
\CC_P(\alpha)
\equiv 
\big\{ \Qpg(\alpha u) : u\in \S \big\} 
,\spa 
\alpha \in (0,1)
.
$$
When, in addition, $P$ is non-atomic, Proposition \ref{PropExistUniq} entails that $\CC_P(\alpha)$ is given by the $\alpha$-level set of the geometric distribution function, \ie 
$$ 
\CC_P(\alpha)
= 
\{x\in \HH:\|\Fpg(x)\|=\alpha\}
,\spa 
\alpha\in (0,1)
.
$$
\Cref{theorem:Manifold} provides sufficient conditions for the smoothness of the quantile regions. 
\begin{Corol}\label{Corollary:Manifold}
 Let $P$ be a Borel probability measure on $\HH$, and assume that there exists a linear subspace $V \subset H$ of finite dimension~$d\ge 2$ such that $\Pi_V \# P$ admits a density $f_V\in L^p_\loc(V)$ with respect to the $d$-dimensional Hausdorff measure on $V$, for some $p\in (d/(d-\ell),\infty]$ and integer $\ell\in [1,d-1]$. Then, for any $\alpha\in (0,1)$, the quantile contour $\CC_P(\alpha)$ is~$C^\ell$-diffeomorphic to the unit sphere $\S$ of $\HH$. 
\end{Corol}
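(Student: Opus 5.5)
The corollary follows from \Cref{theorem:Manifold}, by restricting the diffeomorphism $\Qpg$ to the sphere $\alpha\S$. The plan has three steps.

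First, I collect the structural facts already established in the proof of \Cref{theorem:Manifold}. Under the present assumptions, the uniform bound (\ref{eq:UI}) shows that $P$ is non-atomic. Since $\dim V\ge 2$ and $\Pi_V\#P$ is absolutely continuous, $P$ is not supported on a single line. Hence \Cref{PropExistUniq} and \Cref{PropQuantileInvert} apply: $\Qpg:\B\to\HH$ is a bijection with inverse $\Fpg$. In particular, $\CC_P(\alpha)=\Qpg(\alpha\S)=\{x\in\HH:\|\Fpg(x)\|=\alpha\}$ is well defined and nonempty for every $\alpha\in(0,1)$. Moreover, \Cref{theorem:Manifold} gives that $\Qpg:\B\to\HH$ is a $C^\ell$-diffeomorphism, with $C^\ell$ inverse $\Fpg$.

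Second, I build the diffeomorphism. Fix $\alpha\in(0,1)$. Define $\Psi_\alpha:\S\to\CC_P(\alpha)$ by $\Psi_\alpha(u)\equiv\Qpg(\alpha u)$. This is the composition of the dilation $u\mapsto\alpha u$, which is a smooth diffeomorphism from $\S$ onto the smooth hypersurface $\alpha\S\subset\B$, with the restriction of $\Qpg$ to $\alpha\S$. $\Psi_\alpha$ is bijective because $\Qpg$ is injective on $\B$ and maps $\alpha\S$ onto $\CC_P(\alpha)$ by definition. Its inverse is $x\mapsto\Fpg(x)/\alpha$.

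Third, I check that $\CC_P(\alpha)$ is a $C^\ell$ submanifold of $\HH$ and that $\Psi_\alpha$ and its inverse are $C^\ell$ in the manifold sense. A $C^\ell$-diffeomorphism between open subsets of a Hilbert space maps $C^\ell$ (here smooth) closed submanifolds to $C^\ell$ submanifolds. Concretely, if $\varphi$ is a local chart that flattens $\alpha\S$ near a point $\alpha u$, then $\varphi\circ\Fpg$ is a $C^\ell$ chart that flattens $\CC_P(\alpha)$ near $\Qpg(\alpha u)$. Equivalently, $\CC_P(\alpha)$ is the regular level set $\{g=\alpha^2\}$ of $g\equiv\|\Fpg\|^2$. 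Here $Dg(x)[h]=2\ps{\Fpg(x)}{D\Fpg(x)[h]}$ is nonzero for some $h$, because $D\Fpg(x)$ is surjective and $\Fpg(x)\ne0$ on the contour. The tangent space is closed of codimension one, since it is the image under $D\Qpg$ of the tangent space of $\alpha\S$. In these charts $\Psi_\alpha$ is a composition of $C^\ell$ maps, and so is its inverse $\Fpg/\alpha$.

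I expect no real obstacle. The only point needing care is the infinite-dimensional notion of submanifold, namely the need for complemented (split) tangent spaces. In a Hilbert space this is automatic because every closed subspace is complemented, so the Banach-space implicit function theorem (Theorem~15.2 in \cite*{Deimling1985}, as used in the proof of \Cref{theorem:Manifold}) applies directly.
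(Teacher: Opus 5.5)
Your proposal is correct and follows the paper's route. The paper gets the corollary directly from Theorem~\ref{theorem:Manifold}: it observes that $\CC_P(\alpha)=\Qpg(\alpha\S)$ is the image of the smooth hypersurface $\alpha\S$ under the $C^\ell$-diffeomorphism $\Qpg$ (whose inverse is $\Fpg$), and that map is exactly your $\Psi_\alpha$. Your regular-level-set check via $\|\Fpg\|^2$ and your remark on complemented tangent spaces fill in details the paper leaves implicit.
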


In particular, if there exist linear subspaces $(V_n)_{n\ge 1}$ of dimension $\dim V_n < \infty$ such that $\Pi_{V_n}\#P$ has a Lebesgue density $f_n\in L^{p_n}_\loc(V_n)$ for some $p_n>1$ with 
\begin{equation}\label{eq:pVn}
\limsup_{n\to \infty}\  \frac{(p_n-1)\dim V_n}{p_n}  \
= 
\infty 
,
\end{equation}
then $\CC_P(\alpha)$ is an infinite-dimensional $C^\infty$-smooth manifold and $C^\infty$-diffeomorphic to $\S$ for any $\alpha\in (0,1)$. Indeed, along a subsequence, we have $\dim V_n (p_n-1)/p_n\to \infty$, so that one can pick a sequence $(\ell_n)\to \infty$ with $\ell_n < \dim V_n (p_n-1)/p_n$ or, equivalently, $p_n > \dim V_n / (\dim V_n- \ell_n)$, to the effect that $\CC_P(\alpha)$ is $C^{\ell_n}$-diffeomorphic to $\S$ for all $n$ and $\alpha \in (0,1)$. Notice that, if $\limsup$ in (\ref{eq:pVn}) is replaced by a limit, the condition $\dim V_n \to \infty$ is necessary for (\ref{eq:pVn}) to hold, to the effect that (\ref{eq:pVn}) is equivalent to $\dim V_n\to \infty$ and $(p_n-1)\dim V_n \to \infty$. In particular if $\dim V_n \to \infty$ and, for some $p>1$, we have $f_n\in L^p_\loc(V_n)$ for all $n$, then the previous conclusion holds.

% \textcolor{red}{Comment on integrability versus regularity as in finite dim. Here, just integrability (decay)}

\begin{Examples} 
After choosing a countable orthonormal basis $\{e_k : k\ge 1\}$ of $\HH$, we may isometrically identify $\HH$ with $(\R^\infty, \|\cdot\|_{\ell^2(\N)})$.
% through 
% $$
% \varphi : H\to (\R^\infty, \|\cdot\|_{\ell^2(\N)}),\ x\mapsto
% \big(\ps{x}{e_k} : k\ge 0\big) 
% ,\spa 
% \forall\ x\in \HH 
% .
% $$
Then, consider the linear subspaces $V_n={\rm span}\{e_1,\ldots, e_n\}$ with corresponding projection $\Pi_n(\sum_{k\ge 1} \alpha_k e_k) = (\alpha_1, \ldots, \alpha_n)$ for any $(\alpha_k)\in\ell^2(\N)$.
\begin{itemize}
    % \item The product measure $P\equiv \prod_{k\ge 0} \NN(0,\sigma_k^2)$ with \textit{positive} coefficients $(\sigma_k)\in \ell^2(\N)$ corresponding to the law of the $\HH$-valued random variable 
    % $$
    % X
    % =
    % \sum_{k\ge 0} 
    % \sigma_k g_k  e_k
    % ,\spa 
    % g_k\stackrel{iid}{\thicksim} \NN(0,1)
    % .
    % $$
    % Then, $\Pi_{ij}\#P$ is the centered Gaussian measure on $\R^2$ with diagonal covariance matrix ${\rm diag}(\sigma_i^2, \sigma_j^2)$.
    \item Consider the centered Gaussian measure $P$ on $\HH$ arising as the law of the centered Gaussian process $\{\W(x) : x\in \HH\}$ with covariance 
    $$
    \E\big[\W(x)\W(y)\big]
    = 
    \ps{x}{Sy}
    ,
    $$
    where $S:H\to H$ is a compact self-adjoint linear operator. The spectral theorem entails that $\HH$ admits an orthonormal basis consisting of eigenvectors of $S$, which we take for $\{e_k : k\ge 1 \}$, \ie 
    $$
    S=\sum_{k\ge 1} \lambda_k^2  e_k\otimes e_k
    ,
    $$ 
    with non-negative eigenvalues $(\lambda_k^2)\to 0$. As a consequence, the probability measure $P$ is the law of the $\HH$-valued random variable 
    $$
    X
    =
    \sum_{k\ge 1} \lambda_k g_k  e_k
    ,\spa 
    g_k\stackrel{iid}{\thicksim} \NN(0,1)
    .
    $$
    Then, $\Pi_n\#P$ is the centered Gaussian measure on $\R^n$ with diagonal covariance matrix $\diag(\lambda_1^2,\ldots, \lambda_n^2)$. In particular, if $S$ is non-degenerate, \ie $\lambda_k>0$ for all $k$, then, for all $n$, $\Pi_n\#P$ admits a density $f_n\in L^\infty(\R^n)$ so that $\CC_P(\alpha)$ is a $C^\infty$-manifold diffeomorphic to $\S$ for any $\alpha\in (0,1)$.
%     $$
%     \Pi_{ij}\#P
%     =
%     \NN\bigg(\begin{bmatrix} 0\\ 0\end{bmatrix}, 
% \begin{bmatrix}
% \lambda_i^2 & 0\\
% 0 & \lambda_j^2
% \end{bmatrix}
% \bigg) 
%     .
%     $$
    % is the centered Gaussian measure on $\R^2$ with diagonal covariance matrix ${\rm diag}(\lambda_i^2, \lambda_j^2)$.
    \item Let $S$ be a covariance operator as in the previous example. Consider centered probability measures $\{\mu_k : k\ge 0\}$ on $\R$ with variance $\lambda_k$, and the probability measure $P$ arising as the law of the $\HH$-valued random variable 
    $$
    X
    =
    \sum_{k\ge 0} g_k e_k 
    ,\spa 
    g_k \thicksim \mu_k
    ,
    $$
    with covariance $S$, \ie $\E[g_i g_j] = \lambda_i^2 \delta_{ij}$,
    where $\delta_{ij}$ stands for the Kronecker symbol. Then, $\Pi_n\#P$ is the probability measure on $\R^n$
    $$
    \Pi_n\#P
    = 
    {\rm Law}(g_1, \ldots, g_n)
    .
    $$
    If the $g_i$'s, \ie the coordinate projections of $X$ in the basis $\{e_k:k\ge 0\}$, are mutually independent, then $\Pi_n\# P$ is simply the product measure $\mu_1\otimes \ldots \otimes \mu_n$. In this case, for any $n\ge 0$, $\Pi_n\# P$ has a density $f_n\in L^{p_n}_\loc(\R^n)$ for some $p_n\in [1,\infty]$ provided $\mu_1,\ldots, \mu_n$ all have a Lebesgue density in $L^{p_n}_\loc(\R)$. In particular if, for some sequence $k_n\to \infty$ such that $k_n/n$ is non-increasing, we have $\mu_n\in L^{1+k_n/n}_\loc(\R)$ for all $n$,   
    % has a density in $L^{1+k_n/n}$, for all $k\ge 0$, we have 
    % $$
    % \mu_k\in L^{1+\delta_k}_\loc(\R)
    % ,
    % \text{for some }
    % \delta_k>0
    % ,
    % $$
    then one can take $p_n=1+k_n/n$ to the effect that $f_n\in L^{p_n}_\loc(\R^n)$ with $(p_n-1)n\to\infty$. Consequently, $\CC_P(\alpha)$ is a $C^\infty$-manifold diffeomorphic to $\S$ for every $\alpha\in (0,1)$.
\end{itemize}
\end{Examples}
% Note that, contrary to Proposition \ref{PropExistUniq}, we do not need to assume in Corollary \ref{Corollary:Manifold} that $P$ is non-atomic and not supported on a single line of $\HH$ since this will be a consequence of the assumption in Corollary~\ref{Corollary:Manifold}. 

\begin{Remark}[Negligibility of the quantile contours]
Some test statistics based on depth functions are consistent under the assumption that the depth contours are negligible with respect to $P$. In $\R^d$, this property holds for most depth functions when $P$ does not assign mass to small sets, for example, when it admits a density with respect to the Lebesgue measure. In infinite-dimensional Hilbert spaces, the Lebesgue measure does not exist, and the most common notion of small sets is given by \emph{Gaussian null sets}, \ie sets that are negligible with respect to any non-degenerate Gaussian measure.
The classes of Gaussian null and \emph{Aronszajn null sets}\footnote{Recall that $E \subset \HH$ is an \emph{Aronszajn null set} (cf.~\cite*{Csrnyei1999AronszajnNA}) if there exists a complete sequence $\{e_i\}_{i \in \N} \subset \HH$ such that $E$ can be written as a countable union of Borel sets $\{E_i\}_{i \ge 1}$ with the property that each $E_i$ is null on every line in the direction $e_i$; that is, for every $i \in \N$ and $a \in \HH$, $\{ t \in \R : a + t\, e_i \in E_i \}| = 0,$
where $|\cdot|$ denotes the one-dimensional Lebesgue measure.} coincide.  
    We notice that under the setting of \Cref{Corollary:Manifold} the quantile contours are Lipschitz hypersurfaces, which implies that they are Gaussian null sets. 
\end{Remark}

\section*{Acknowledgements}
Dimitri Konen acknowledges funding from an ERC Advanced Grant (UKRI G116786).

%%%%%%%%%%%%%%%%%%%%%%%%%%%%%%%%%%%%%%%%%%%%%%%%%%%%%%%%%%%%%
%%                  The Bibliography                       %%
%%                                                         %%
%%  imsart-number.bst  will be used to                     %%
%%  create a .BBL file for submission.                     %%
%%                                                         %%
%%  Note that the displayed Bibliography will not          %%
%%  necessarily be rendered by Latex exactly as specified  %%
%%  in the online Instructions for Authors.                %%
%%                                                         %%
%%  MR numbers will be added by VTeX.                      %%
%%                                                         %%
%%  Use \cite*{...} to cite references in text.             %%
%%                                                         %%
%%%%%%%%%%%%%%%%%%%%%%%%%%%%%%%%%%%%%%%%%%%%%%%%%%%%%%%%%%%%%

\appendix

\section{Gradient flows in a nutshell} 
\label{sec:ReminderFlows} 
In this section, we gather results on the existence and uniqueness, as well as various qualitative properties, of gradient flows associated with a potential $\Phi$. When $\Phi$ satisfies standard regularity and convexity assumptions, then solutions $u$ to the equation 
$$
\dot u(t) 
=
-\nabla \Phi(u(t))
,\spa 
u(0)=x_0
,
$$
exist, where $\dot u(t)$ stands for the time-derivative $du/dt$.  We say that a map $f:[0,\infty)\to\HH$ is \emph{absolutely continuous} on $[0,T]$ if there exists a Borel measurable map $\dot f:[0,T]\to\HH$ such that $ \int_{0}^T\|\dot f(s)\| ds <\infty$ and  
$$
\langle f(t), v\rangle =\langle f(0), v\rangle+\int_0^t \langle \dot f(s), v\rangle  ds 
,\spa 
\text{for all } t\in [0,T] \ \text{and } v\in \HH.
$$
In particular, when this is the case, then $f$ is continuous over $[0,T]$. For a proper convex function $\Phi:\HH\to \R$ (so that, in particular, $\Phi$ is continuous), let the \emph{subgradient} of $\Phi$ at $x$ be defined by
$$
\partial\Phi (x)
\equiv
\bigg\{ z\in\HH : \Phi(x) + \langle z, y-x \rangle \leq \Phi(y),\ \text{for all } y\in\HH
\bigg\} 
.
$$
 We state now the following proposition, which shows the existence of a gradient flow.

\begin{Prop}\label{PropGradFlow} Let $\Phi:\HH\to \R$ be a continuous convex function. Assume that $\inf \Phi > -\infty$. Then, for any $x_0\in \HH$, there exists a unique map $u:[0,\infty)\to \HH$ such that 
	\begin{enumerate}
		\item[(i)] $u$ is continuous and absolutely continuous on each interval $[0,T]$ with $0<T<\infty$;
		\item[(ii)] $-\dot{u}(t)\in \partial \Phi(u(t))$ for almost all $t>0$, and $u(0)=x_0$.
	\end{enumerate}
	The map $u$ satisfies the additional following properties:
	\begin{enumerate}
		\item [(iii)] $\dot{u}\in L^2([0,\infty),\HH)\cap L^\infty([0,\infty),\HH)$. 
		\item[(iv)] The map $t\mapsto \Phi(u(t)) \in \R$ is absolutely continuous on each interval $[0,T]$, with $0<T<\infty$, and a.e.~defined derivative
			$$
			\frac{d}{dt} \Phi(u(t)) 
			=
			-\|\dot{u}(t)\|^2
			 \quad \text{		for a.e.~$t>0$.}
			$$
            Furthermore, $\Phi(u(t))\to \inf \Phi$ as $t\to \infty$.
        \item[(v)] If $\psi$ is a convex and continuous function, and $f\in L^2([0,T],\HH)$ is such that $f(t)\in \partial \psi(u(t))$ for almost all $t\in [0,T]$, then the map $t\mapsto\psi(u(t))$ is absolutely continuous on $[0,T]$ with a.e.~defined derivative
		$$
		\frac{d}{dt} \psi(u(t))
		=
		\langle f(t),\dot{u}(t)  \rangle
		.
		$$
		\item [(vi)] If $\argmin \Phi\neq \emptyset$, then there exists $u_\infty\in \argmin \Phi$ such that $u(t)\rightharpoonup u_\infty$ as $t\uparrow \infty$; %Furthermore, if $\Phi(x)=\Phi(-x)$ for all $x\in \HH$, then $u(t)\to  u_\infty$ in the norm topology as $t\uparrow \infty$. 
	\end{enumerate}
\end{Prop}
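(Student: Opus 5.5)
This is the classical subgradient flow of a convex function on a Hilbert space (Brézis' theory of maximal monotone operators). The plan is to realize the flow as the limit of the Moreau--Yosida regularized flows and then read off each property from energy identities. Since $\Phi$ is convex and finite everywhere, it is continuous, and $\partial\Phi$ is a maximal monotone operator with domain all of $\HH$. For $\lambda>0$ we introduce the Moreau envelope $\Phi_\lambda(x)=\min_y\{\Phi(y)+\|x-y\|^2/(2\lambda)\}$. It is convex and $C^{1,1}$, and its gradient $\nabla\Phi_\lambda=(\mathrm{Id}-J_\lambda)/\lambda$ is $1/\lambda$-Lipschitz, where $J_\lambda=(\mathrm{Id}+\lambda\partial\Phi)^{-1}$ is the resolvent. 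By Cauchy--Lipschitz, the ODE $\dot u_\lambda=-\nabla\Phi_\lambda(u_\lambda)$ with $u_\lambda(0)=x_0$ has a unique global solution.

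The first step is to prove the uniform estimates $\|\dot u_\lambda(t)\|\le\|\nabla\Phi_\lambda(x_0)\|\le\|(\partial\Phi)^\circ(x_0)\|$, using monotonicity to see that $t\mapsto\|\dot u_\lambda(t)\|$ is non-increasing. We also obtain $\Phi_\lambda(u_\lambda(t))+\int_0^t\|\dot u_\lambda\|^2=\Phi_\lambda(x_0)\le\Phi(x_0)$. Together with $\Phi_\lambda\ge\inf\Phi>-\infty$, these give bounds in $L^\infty\cap L^2$ that are uniform in $\lambda$.

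The second step is the key Cauchy estimate
\[
\tfrac{d}{dt}\|u_\lambda-u_\mu\|^2\le C(\lambda+\mu)\|(\partial\Phi)^\circ(x_0)\|^2 ,
\]
which follows from monotonicity of $\partial\Phi$ applied at the resolvent points $J_\lambda u_\lambda$ and $J_\mu u_\mu$. It shows that $u_\lambda\to u$ uniformly on $[0,T]$. We then pass to the limit: $\dot u_\lambda\wto\dot u$ weakly in $L^2$, and $J_\lambda u_\lambda\to u$. The weak--strong closedness of the graph of $\partial\Phi$ yields $-\dot u(t)\in\partial\Phi(u(t))$ a.e., which proves (i), (ii) and (iii). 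Uniqueness is immediate from monotonicity: if $u,w$ are two solutions, then $\frac{d}{dt}\|u-w\|^2\le0$.

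For (v) we use the chain rule for convex functions. Fix $t$ at which both $u$ and $\psi\circ u$ are differentiable (absolute continuity of $\psi\circ u$ follows from local Lipschitzness of $\psi$ on the bounded set $u([0,T])$). The subgradient inequality gives
\[
\psi(u(t+h))-\psi(u(t))\ge\langle f(t),u(t+h)-u(t)\rangle .
\]
Dividing by $h>0$ and by $h<0$ and letting $h\to0$ yields equality with $\langle f(t),\dot u(t)\rangle$. Property (iv) is then (v) with $\psi=\Phi$ and $f=-\dot u$. Monotone convergence $\Phi(u(t))\to\inf\Phi$ comes from the inequality $\Phi(u(t))-\Phi(y)\le\|x_0-y\|^2/(2t)$, obtained by integrating $\frac{d}{dt}\tfrac12\|u-y\|^2\le\Phi(y)-\Phi(u)$ and using that $\Phi(u(t))$ is non-increasing.

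For (vi) we use Opial's lemma. For each $y\in\argmin\Phi$, the distance $\|u(t)-y\|$ is non-increasing, so it has a limit. Every weak cluster point $\bar u$ of $u(t)$ satisfies $\Phi(\bar u)\le\liminf\Phi(u(t))=\inf\Phi$ by weak lower semicontinuity, so $\bar u\in\argmin\Phi$. Opial's argument then shows that the weak cluster point is unique, which gives $u(t)\wto u_\infty$.

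The main obstacle I expect is the second step, namely the Yosida convergence estimate together with identifying the limit inclusion in infinite dimension, where only weak compactness of $\dot u_\lambda$ is available. This can alternatively be cited from Brézis (1973), Theorems 3.1 and 3.2, and Bruck (1975) for (vi).
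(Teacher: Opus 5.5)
Your proposal is correct and follows essentially the route of the paper: the paper simply cites Attouch--Buttazzo--Michaille (Theorem~17.2.2 and Propositions~17.2.5, 17.2.7, 17.2.11), which is the same Br\'ezis--Bruck theory you spell out (Yosida approximation, monotonicity for uniqueness, the convex chain rule, the $O(1/t)$ estimate, and Opial's lemma). Two small precisions: the limit inclusion should come from the strong--weak closedness of the graph of the realization of $\partial\Phi$ in $L^2(0,T;\HH)$ (not of $\partial\Phi$ itself, since $\dot u_\lambda(t)$ need not converge weakly for fixed $t$), and in (v) the Lipschitz bound for $\psi$ along the trajectory should be drawn from compactness of $u([0,T])$ rather than boundedness, because in infinite dimension a continuous convex function need not be Lipschitz, or even bounded, on bounded sets.
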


\begin{proof}
     The proof of Parts (i)-(iv) can be found in \cite[Theorem~17.2.2]{Attouch.et.al.Variational.SIAM.2014}. The fact that $\Phi(u(t))\to \inf \Phi$ as $t\to \infty$ is shown in Proposition~17.2.7 there  Note that Part~(iii) implies that $u\in W^{1,2}([0,T],\HH)$ for all $T$, so that Part (v) follows from Proposition~17.2.5 there, whereas the last part is proven in Proposition~17.2.11. 
\end{proof}

\bibliographystyle{agsm}
\bibliography{mybib}

@book {BrezisFunctional,
    AUTHOR = {Brezis, Haim},
     TITLE = {Functional analysis, {S}obolev spaces and partial differential
              equations},
    SERIES = {Universitext},
 PUBLISHER = {Springer, New York},
      YEAR = {2011},
     PAGES = {xiv+599},
      ISBN = {978-0-387-70913-0},
}

@article {KonPai23,
    AUTHOR = {Konen, Dimitri and Paindaveine, Davy},
     TITLE = {Spatial quantiles on the hypersphere},
   JOURNAL = {Ann. Statist.},
  FJOURNAL = {The Annals of Statistics},
    VOLUME = {51},
      YEAR = {2023},
    NUMBER = {5},
     PAGES = {2221--2245},
      ISSN = {0090-5364,2168-8966},
}

@article {ChoCha19,
    AUTHOR = {Chowdhury, Joydeep and Chaudhuri, Probal},
     TITLE = {Nonparametric depth and quantile regression for functional
              data},
   JOURNAL = {Bernoulli},
  FJOURNAL = {Bernoulli. Official Journal of the Bernoulli Society for
              Mathematical Statistics and Probability},
    VOLUME = {25},
      YEAR = {2019},
    NUMBER = {1},
     PAGES = {395--423}
}

@article {DaoStuUss24,
    AUTHOR = {Daouia, Abdelaati and Stupfler, Gilles and Usseglio-Carleve,
              Antoine},
     TITLE = {Bias-reduced and variance-corrected asymptotic {G}aussian
              inference about extreme expectiles},
   JOURNAL = {Stat. Comput.},
  FJOURNAL = {Statistics and Computing},
    VOLUME = {34},
      YEAR = {2024},
    NUMBER = {4},
     PAGES = {Paper No. 130, 73},
      ISSN = {0960-3174,1573-1375},
}

@book {Tit1939,
    AUTHOR = {Titchmarsh, E. C.},
     TITLE = {The theory of functions},
      NOTE = {Reprint of the second (1939) edition},
 PUBLISHER = {Oxford University Press, Oxford},
      YEAR = {1958},
     PAGES = {x+454},
}

@book {Deimling1985,
    AUTHOR = {Deimling, Klaus},
     TITLE = {Nonlinear functional analysis},
 PUBLISHER = {Springer-Verlag, Berlin},
      YEAR = {1985},
     PAGES = {xiv+450},
}

@article {KonStu2025,
    AUTHOR = {Konen, Dimitri and Stupfler, Gilles},
     TITLE = {Regularized geometric quantiles and universal linear distribution functionals},
   JOURNAL = {Electron. J. Stat., conditionally accepted},
  FJOURNAL = {Electronic Journal of Statistics},
    VOLUME = {},
      YEAR = {2026},
    NUMBER = {},
     PAGES = {},
      ISSN = {},
       DOI = {},
}

@article {KonPai2025,
    AUTHOR = {Konen, Dimitri and Paindaveine, Davy},
     TITLE = {Existence and breakdown analysis of {M}-quantiles in general
              {H}ilbert spaces},
   JOURNAL = {Electron. J. Stat.},
  FJOURNAL = {Electronic Journal of Statistics},
    VOLUME = {19},
      YEAR = {2025},
    NUMBER = {2},
     PAGES = {5778--5804},
      ISSN = {1935-7524},
}

@article {Hardy1920,
    AUTHOR = {Hardy, G. H.},
     TITLE = {On two theorems of {F}. {C}arlson and {S}. {W}igert},
   JOURNAL = {Acta Math.},
  FJOURNAL = {Acta Mathematica},
    VOLUME = {42},
      YEAR = {1920},
    NUMBER = {1},
     PAGES = {327--339},
      ISSN = {0001-5962,1871-2509},
       DOI = {10.1007/BF02404414},
}

@phdthesis{CarlesonThesis,
	author = {Carlson, F.},
	school = {Facult{\'e} des Sciences d'Upsal},
	title = {Sur une classe de s{\'e}ries de Taylor},
	year = {1914}}

@article {GorKol87,
    AUTHOR = {Gorin, E. A. and Koldobskii, A. L.},
     TITLE = {On potentials of measures in {B}anach spaces},
   JOURNAL = {Sibirsk. Mat. Zh.},
  FJOURNAL = {Akademiya Nauk SSSR. Sibirskoe Otdelenie. Sibirskii
              Matematicheskii Zhurnal},
    VOLUME = {28},
      YEAR = {1987},
    NUMBER = {1},
     PAGES = {65--80, 225},
      ISSN = {0037-4474},
}

@article {KonPai2025a,
    AUTHOR = {Konen, Dimitri and Paindaveine, Davy},
     TITLE = {On the robustness of spatial quantiles},
   JOURNAL = {Ann. Inst. Henri Poincar\'e{} Probab. Stat.},
  FJOURNAL = {Annales de l'Institut Henri Poincar\'e{} Probabilit\'es et
              Statistiques},
    VOLUME = {62},
      YEAR = {2026},
    NUMBER = {1},
     PAGES = {548--581},
      ISSN = {0246-0203,1778-7017},
}

@article{PasReid2022,
	author = {Passeggeri, R. and Reid, N.},
	journal = {Arxiv preprint arXiv:2206.06998},
	title = {A universal robustification procedure},
	year = {2022}}

@article{Romon2022,
	author = {Romon, G.},
	journal = {Arxiv preprint arXiv:2211.00035},
	title = {Statistical properties of approximate geometric quantiles in infinite-dimensional Banach spaces},
	year = {2022}}

@article{Cha1996,
	author = {Chaudhuri, P.},
	journal = {J. Amer. Statist. Assoc.},
	pages = {862--872},
	title = {On a geometric notion of quantiles for multivariate data},
	volume = {91},
	year = {1996}}

@article{Cha2003,
	author = {Chakraborty,B.},
	journal = {J. Statist. Plann. Inference},
	pages = {109--132},
	title = {On multivariate quantile regression},
	volume = {110},
	year = {2003}}

@article{GirStu2015,
	author = {Girard, S. and Stupfler, G.},
	journal = {Extremes},
	pages = {629--663},
	title = {Extreme geometric quantiles in a multivariate regular variation framework},
	volume = {18},
	year = {2015}}

@article{HallEspagne21,
	author = {Hallin, M. and del Barrio, E. and Cuesta-Albertos, J. C. and Matr\'{a}n, C.},
	journal = {Ann. Statist.},
	pages = {1139--1165},
	title = {Distribution and quantile functions, ranks and signs in dimension $d$: a measure transportation approach},
	volume = {49},
	year = {2021}}

@article{Kol1997,
	author = {Koltchinski, V. I.},
	journal = {Ann. Statist.},
	pages = {435--477},
	title = {{M-estimation}, convexity and quantiles},
	volume = {25},
	year = {1997}}

@article{Kon2025,
	author = {Konen, D.},
	journal = {Bernoulli},
        pages = {2077--2104},
        volume = {31},
        number = {3},
	title = {\mbox{PDE} characterization of geometric distribution functions and quantiles},
	year = {2025}}

@article{Kon2025Supp,
	author = {Konen, D.},
	title = {Supplement to ``\mbox{PDE} characterization of geometric distribution functions and quantiles"},
	year = {2025}}

@article{KonPai1,
	author = {Konen, D. and Paindaveine, D.},
	journal = {Bernoulli,},
	pages = {1912--1934},
	title = {Multivariate $\rho$-quantiles: a spatial approach},
	volume = {28},
	year = {2022}}

@article{LopRou1991,
	author = {Lopuha\"a, Hendrik P. and Rousseeuw, Peter J.},
	journal = {Ann. Statist.},
	pages = {229--248},
	title = {Breakdown points of affine equivariant estimators of multivariate location and covariance matrices},
	volume = {19},
	year = {1991}}

@article{Mottonen97,
	author = {Mottonen, J. and Oja, H. and Tienari, J.},
	journal = {Ann. Statist.},
	number = {2},
	pages = {542-552},
	title = {On the efficiency of multivariate spatial sign and rank tests},
	volume = {25},
	year = {1997}
}

@article{PaiVanB2012,
	author = {Paindaveine, Davy and Van Bever, Germain},
	note = {Unpublished},
	title = {Nonparametrically consistent depth-based classifiers},
	year = {2012}}

@inbook{PaiVir2021,
	address = {Cham.},
	author = {Paindaveine, D. and Virta, J.},
	booktitle = {Advances in Contemporary Statistics and Econometrics},
	editor = {Daouia, A. and Ruiz-Gazen, A.},
	pages = {243--259},
	publisher = {Springer},
	title = {On the behavior of extreme $d$-dimensional spatial quantiles under minimal assumptions},
	year = {2021}}

@article{VarZha2000,
	author = {Vardi, Y. and Zhang, C.-H.},
	journal = {Proc. Natl. Acad. Sci. USA},
	number = {4},
	pages = {1423--1426},
	title = {The multivariate {$L_1$}-median and associated data depth},
	volume = {97},
	year = {2000}}

@article{Nag2021,
	author = {Nagy, S.},
	journal = {Statist. Papers},
	pages = {1135-1139},
	title = {Halfspace depth does not characterize probability distributions},
	volume = {62},
	year = {2021}}

@article{KonPai1Supp,
	author = {Konen, D. and Paindaveine, D.},
	title = {Supplement to ``\mbox{Mu}ltivariate $\rho$-quantiles: a spatial Approach"},
	year = {2022}}

@article{Frietal2012,
	author = {Fritz, Heinrich and Filzmoser, Peter and Croux, Christophe},
	journal = {Computational Statistics},
	number = {3},
	pages = {393--410},
	publisher = {Springer},
	title = {A comparison of algorithms for the multivariate {L1}-median},
	volume = {27},
	year = {2012}}

@misc{Kem1987,
	author = {Kemperman, JHB},
	publisher = {North-Holland, Amsterdam},
	title = {The median of a finite measure on a \mbox{Ba}nach space. \mbox{Sta}tistical Data Analysis based on the \mbox{L1-no}rm and related methods, \mbox{Y. Do}dge},
	year = {1987}}

@article{Nag2017,
	author = {Nagy, S.},
	journal = {Statist. Probab. Lett.},
	pages = {373--378},
	title = {Monotonicity properties of spatial depth},
	volume = {129},
	year = {2017}}

@article {PerezSalasSavid-Math-Prog.2021,
    AUTHOR = {P\'erez-Aros, Pedro and Salas, David and Vilches, Emilio},
     TITLE = {Determination of convex functions via subgradients of minimal
              norm},
   JOURNAL = {Math. Program.},
  FJOURNAL = {Mathematical Programming},
    VOLUME = {190},
      YEAR = {2021},
    NUMBER = {1-2},
     PAGES = {561--583},
      ISSN = {0025-5610,1436-4646},
}

@book {Attouch.et.al.Variational.SIAM.2014,
    AUTHOR = {Attouch, Hedy and Buttazzo, Giuseppe and Michaille, G\'erard},
     TITLE = {Variational analysis in {S}obolev and {BV} spaces},
    SERIES = {MOS-SIAM Series on Optimization},
    VOLUME = {17},
   EDITION = {Second},
      NOTE = {Applications to PDEs and optimization},
 PUBLISHER = {Society for Industrial and Applied Mathematics (SIAM),
              Philadelphia, PA; Mathematical Optimization Society,
              Philadelphia, PA},
      YEAR = {2014},
     PAGES = {xii+793},
      ISBN = {978-1-611973-47-1},
}

@book{Bauschke-Combettes.Springer.2017,
    AUTHOR = {Bauschke, Heinz H. and Combettes, Patrick L.},
     TITLE = {Convex analysis and monotone operator theory in {H}ilbert
              spaces},
    SERIES = {CMS Books in Mathematics/Ouvrages de Math\'ematiques de la
              SMC},
   EDITION = {Second},
      NOTE = {With a foreword by H\'edy Attouch},
 PUBLISHER = {Springer, Cham},
      YEAR = {2017},
     PAGES = {xix+619},
      ISBN = {978-3-319-48310-8; 978-3-319-48311-5},
}

@article{Csrnyei1999AronszajnNA,
  title={A{r}onszajn null and {G}aussian null sets coincide},
  author={Marianna Cs{\"o}rnyei},
  journal={Israel J. Math.},
  year={1999},
  volume={111},
  pages={191--201}
}

@article{gonzalezsanz.etl.a.2025monotone,
      title={Monotone measure-transportation maps in Hilbert spaces, with statistical applications}, 
      author={Alberto González-Sanz and Marc Hallin and Bodhisattva Sen},
      year={2025},
journal={To appear in Bernoulli}
}

@article {Lyons.2013.AoP,
    AUTHOR = {Lyons, Russell},
     TITLE = {Distance covariance in metric spaces},
   JOURNAL = {Ann. Probab.},
  FJOURNAL = {The Annals of Probability},
    VOLUME = {41},
      YEAR = {2013},
    NUMBER = {5},
     PAGES = {3284--3305},
      ISSN = {0091-1798,2168-894X},
}

\bigskip
\bigskip

\textsc{Dimitri Konen}

\textsc{Department of Pure Mathematics \& Mathematical Statistics}

\textsc{University of Cambridge}, Cambridge, United Kingdom

Email: dk738@cam.ac.uk

\bigskip

\textsc{Alberto Gonz{\'a}lez-Sanz}

\textsc{Department of Statistics}

\textsc{Columbia University}, New York, United States

Email: alberto.gonzalezsanz@columbia.edu

% %% if your bibliography is in bibtex format, uncomment commands:
% \bibliographystyle{imsart-number} % Style BST file (imsart-number.bst or imsart-nameyear.bst)
% \bibliography{Paper}       % Bibliography file (usually '*.bib')

%% or include bibliography directly:
% \begin{thebibliography}{}
% \bibitem{b1}
% \end{thebibliography}

\end{document}